\theoremstyle{plain}
\newtheorem{theorem}{Theorem}[section]     		
\newtheorem*{theorem*}{Theorem}
\newtheorem*{theoremA}{Theorem A}
\newtheorem*{theoremB}{Theorem B}
\newtheorem{corollary}[theorem]{Corollary}
\newtheorem{proposition}[theorem]{Proposition}
\newtheorem{lemma}[theorem]{Lemma}
\newtheorem{affirmation}[theorem]{Claim}
\theoremstyle{definition}
\newtheorem{definition}[theorem]{Definition}
\newtheorem{notation*}[theorem]{Notation}
\theoremstyle{remark}            	
\newtheorem{remark}[theorem]{Remark}
\newcommand{\T}{\mathbb{T}}
\newcommand{\A}{\mathbb{A}}
\newcommand{\R}{\mathbb{R}}
\newcommand{\Q}{\mathbb{Q}}
\newcommand{\Z}{\mathbb{Z}}
\newcommand{\N}{\mathbb{N}}
\newcommand{\I}{{\mathcal{I}}}
\newcommand{\F}{{\mathcal{F}}}
\newcommand{\tildegamma}{\tilde{\gamma}}
\newcommand{\tildeF}{\tilde{\mathcal{F}}}
\newcommand{\fix}{\operatorname{fix}}
\newcommand{\sing}{\operatorname{sing}}
\newcommand{\dom}{\operatorname{dom}}
\newcommand{\deck}{\operatorname{deck}}
\newcommand{\inte}{\operatorname{B}_{\operatorname{c}}}
\newcommand{\exte}{\operatorname{UB}_{\operatorname{c}}}
\newcommand{\pr}{\operatorname{p}}
\newcommand{\dist}{\operatorname{d}}
\newcommand{\sphere}{\operatorname{sphere}}
\newcommand{\homeo}{\operatorname{Homeo}}
\newcommand{\rot}{\operatorname{rot}}
\newcommand{\tildedom}{\widetilde{\dom}}
\newcommand{\Ne}{\operatorname{ne}}
\renewcommand{\S}{\mathbb{S}}
\definecolor{azul}{rgb}{0,0,1}
\definecolor{vermelho}{rgb}{1,0,0}
\title{Transitivity and the existence of horseshoes on the $2$-torus}
\author{Pollyanna Vicente Nunes}
\address[Nunes]{Instituto de Matem\' atica e Estat\' istica da Universidade de S\~ao Paulo,
	R. do Mat\~ ao, 1010 - Butant\~a, S\~ ao Paulo, Brasil - 05508---090}
\email{pollyannavnunes@gmail.com }
\author{F\' abio Armando Tal}
\address[Tal]{Instituto de Matem\' atica e Estat\' istica da Universidade de S\~ao Paulo,
	R. do Mat\~ ao, 1010 - Butant\~a, S\~ ao Paulo, Brasil - 05508---090} 
\email{fabiotal@ime.usp.br}
\thanks{The first author was supported by CAPES - Finance code 001. The second author was partially supported by CNPq and FAPESP}
\begin{document}
	\begin{abstract}
		\noindent We study the relationship between transitivity and topological chaos for homeomorphisms of the two torus. We show that if a transitive homeomorphism of $\T^2$ is homotopic to the identity and has both a fixed point and a periodic point which is not fixed, then it has a topological horseshoe. We also show that if a transitive homeomorphims of $\T^2$ is homotopic to a Dehn twist, then either it is aperiodic or it has a topological horseshoe. 
	\end{abstract}
	
	\maketitle

	\section{Introduction}
	
	Three topological concepts that attempt to describe the complexities of dynamical systems are the notions of transitivity, of positive topological entropy, and of exponential growth of periodic points. While these three concepts are independent and there are several examples of systems possessing one of them but not the other two, it is a consequence of a result by Katok (see \cite{katok80}) that, for $\mathcal{C}^{1+\alpha}$ diffeomorphisms of surfaces positive topological entropy implies the exponential growth of periodic points, since positive topological entropy implies the presence of horseshoes. On the other hand, transitivity is usually a ``weaker'' requirement, not implying the richness of dynamical behavior that positive topological entropy does. 
	
	But while in general transitivity does not imply that a system is topologically chaotic (i.e., has both positive topological entropy and also exponential growth of periodic points), there is evidence to suggest that, for surface homeomorphims, these concepts are related, and that in many cases the former implies the later. Indeed, it is a direct consequence of Corollary J of \cite{ferradura}, improving on both a theorem by Handel (\cite{handel92}) and previous work by the same authors (\cite{forcing}), that every transitive homeomorphism  of the $2$-sphere {with three distinct periodic orbits} is topologically chaotic. Note that this is optimal in some ways. There are transitive $\mathcal{C}^{\infty}$ diffeomorphisms of $\S^2$ having only two fixed points, and every non-wandering homeomorphism of $\S^2$ has at least two fixed points.
	
	Our goal here is to try and extend the previous result, as best as possible, to different surfaces and in particular to $\T^2$. Since this paper is concerned with continuous dynamics without any other smoothness requirement, let us introduce a purely topological concept of horseshoe, a variation of the one proposed by Kennedy-Yorke (\cite{ky01}) that appeared in (\cite{ferradura}). 
	
	\begin{definition}[Topological Horseshoe]\label{def:horseshoe}
		Given $X$ a Hausdorff locally compact topological space and $f:X\to X$ a homeomorphism on $X$, we say that a compact subset $\Delta$ of $X$ is a \textit{topological horseshoe for $f$} if $\Delta$ is invariant by a power $f^q$ of $f$ and if there exist a Hausdorff compact space $Y$, a continuous map $g: Y\to Y$, a integer $r\geq 2$ and two continuous and surjective maps (factor maps) $\pi_1: Y \to \Delta$ and $\pi_2: Y \to \Sigma$ where $\Sigma=\{0,\cdots,r-1\}^{\Z}$, such that:  
		\begin{enumerate}
			\item the factor map $\pi_1$ is a semi-conjugation between $g$ and $f^q|_{\Delta}$, where fibers of $\pi_1$ are all finite with a uniform bound in their cardinality;
			\item the factor map $\pi_2$ is a semi-conjugation between ${g}$ and the Bernoulli shift, $\sigma:\Sigma\to \Sigma$;
			\item The preimage of every $n$-periodic sequence of $\Sigma$, by the factor map $\pi_2$, contains a $n$-periodic point of ${g}$.
		\end{enumerate}
	\end{definition}
	
	It is an immediate consequence of Definition \ref{def:horseshoe} that any homeomorphism of a compact metric space possessing a topological horseshoe has both strictly positive topological entropy, and also that some power of the map has exponential growth of periodic points, so that the system is topologically chaotic.
	
	In this paper we investigate to what {extent} does transitivity imply the existence of a topological horseshoe for homeomorphisms of the $2$-torus.  
	
	There are of course examples of transitive torus homeomorphims without periodic points, like ergodic rotations, but there are even examples of transitive torus homeomorphisms that have positive topological entropy and no periodic points (\cite{rees81}). On the other hand, the same kind of restriction found for sphere homeomorphisms does not hold. One can build examples of $2$-torus homeomorphims with arbitrarily many fixed point that are transitive and smooth, and with zero entropy. This can be done by taking the time one map of a flow of the type $x\prime = \phi(x)  v$, where $v$ is a totally irrational vector, and $\phi:\T^2\to [0,1]$ is null at  a given finite set of points. The zeros of $\phi$ will be fixed points of the dynamics and careful choice of $\phi$ will also yield that the dynamic is transitive. Furthermore, a similar procedure could be done and one could obtain that the dynamics is even area-preserving, see \cite{salvadortal}.
	
	Further complicating the picture for torus homeomorphisms is that these do not all belong to the same isotopy class. The standard classification of isotopy classes of homeomorphims of $\T^2$ says that, if $f$ is in $\homeo(\T^2)$, then $f$ satisfies one of the following:
	\begin{enumerate}
		\item A power of $f$ is isotopic to the identity,
		\item A power of $f$ is isotopic to a Dehn twist,
		\item $f$ is isotopic to a linear Anosov homeomorphism.
	\end{enumerate}
	
	It is a well known fact that any homeomorphism of $\T^2$ that is isotopic to a linear Anosov map has a topological horseshoe. So we concentrate on the two models cases for the other situations. 
	
	For the homotopic to identity case we have the following:
	
	\begin{theoremA}
		Let $f$ be a transitive homeomorphism of $\T^2$ and let $g$ be a power of $f$ such that $g$ is isotopic to the identity. If $g$ has both a fixed point and a non-fixed periodic point, then $f$ has a topological horseshoe.
	\end{theoremA}
	
	{
		Note that Theorem A also holds if we just assume that $g$ has two periodic points of different periods, instead of assuming it has a fixed. Indeed, let $z_0,z_1\in\T^2$ be $q_0, q_1$-periodic points, respectively, such $q_0,q_1>1$ with $q_0\neq q_1$. If $h:=g^{q_0}$ then $h$ satisfy the hypothesis of the Theorem A and so we get a horseshoe for $f$.

		The previous theorem and examples dealt with homeomorphisms homotopic to the identity. For homeomorphisms homotopic to a Dehn twist case, we also have transitive examples  without periodic points. For instance, a well known example is the map $(x,y)\to(x+y,y+\alpha)$ where $\alpha\in\R\setminus\Q$, which is a minimal homeomorphism. Our next result proves that a transitive homeomorphism homotopic to a Dehn twist is either aperiodic or it has a topological horseshoe.}
	
	\begin{theoremB}
		Let $f$ be a transitive homeomorphism of $\T^2$ and let $g$ be a power of $f$ such that $g$ is isotopic to a Dehn twist. If $g$ has a periodic point, then $f$ has a topological horseshoe.
	\end{theoremB}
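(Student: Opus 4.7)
The Dehn twist isotopy class has a natural rotation-theoretic invariant, the vertical rotation set $\rho_V(\tilde g)\subset\R$ of a lift $\tilde g:\R^2\to\R^2$ of $g$. My plan is to study this set in the presence of a periodic point and transitivity. First, by replacing $g$ with a suitable power (which remains isotopic to a Dehn twist), I may assume the given periodic point is a fixed point of $g$, and by choosing the lift $\tilde g$ appropriately (adjusting by a vertical integer translation), I may further assume $0\in\rho_V(\tilde g)$. Note that any topological horseshoe I produce for a power of $f$ is also a horseshoe for $f$, so throughout I can work with $g$ and any iterate of it.

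The proof then splits into two cases. In the first case, $\rho_V(\tilde g)$ is the singleton $\{0\}$. I would aim to derive a contradiction with the transitivity of $f$. The intuition is that vanishing asymptotic vertical displacement of every orbit, combined with the nontrivial action of a Dehn twist on $H_1(\T^2;\Z)$, should force the existence of an essential invariant closed annular region for some iterate of $g$, and hence for $f$, which contradicts transitivity. The technical tools for producing this region would likely be a Brouwer--Le Calvez type equivariant foliation applied in the cylindrical cover $\T\times\R$ (the partial universal cover respecting the Dehn twist), where the hypothesis $\rho_V=\{0\}$ translates into the boundedness of vertical drift along the foliation, and the existence of a fixed point supplies a starting ``rest'' leaf to propagate.

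In the second case, $\rho_V(\tilde g)$ is a nondegenerate compact interval containing $0$. The Dehn-twist analogue of the Franks periodic-point realization theorem provides, for every rational in the interior of $\rho_V(\tilde g)$, a periodic point of $g$ realizing it. In particular, one obtains two periodic orbits with distinct vertical rotation numbers, topologically linked in a nontrivial way. Applying the forcing theory machinery from \cite{ferradura}, together with the transitivity of $f$ (which supplies dense orbits intersecting any neighborhood of these linked periodic orbits), I would conclude the existence of a topological horseshoe for some iterate of $f$, hence for $f$ itself.

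The main obstacle is Case 1: ruling out a degenerate rational rotation set under the transitivity hypothesis. Unlike the identity-isotopic case treated in Theorem A, where two periodic points of different periods directly produce two distinct rotation vectors and immediately activate the forcing theory, here a single periodic orbit only yields a single rational element of $\rho_V$, and transitivity must be used in an essential and more delicate way to force the rotation interval to have positive length. The difficulty is compounded by the fact that the natural annular structure one would like to exploit is itself twisted by the Dehn twist, so the relevant invariant sets have to be constructed in the cylindrical cover rather than directly on $\T^2$, and this is where I expect the bulk of the effort to lie.
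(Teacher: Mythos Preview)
Your case split on whether $\rho_V$ is a point or a nondegenerate interval is exactly the paper's, and your Case~2 outline matches the paper's argument (realize two periodic orbits in the annular lift with distinct rotation numbers, then apply the forcing results of \cite{ferradura}; the paper uses Proposition~D there, via the observation that the ergodic points realizing the endpoints of $\rho_V$ give orbits drifting to opposite ends of the annulus, putting $N$ and $S$ in the same Birkhoff class).

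The gap is in Case~1. Your plan is to show that $\rho_V=\{0\}$ \emph{contradicts transitivity} by producing a proper essential invariant annular region in $\T^2$. This is not what the paper does, and your heuristic for it is unlikely to succeed: the essential invariant continuum one gets in the cylindrical cover $\A$ from $\rho_V=\{0\}$ (this is the Addas-Zanata--Garcia--Tal result) may well project onto all of $\T^2$, so it does not by itself yield a proper invariant set downstairs. In particular, transitivity need not force the vertical rotation interval to have positive length.

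The paper does not rule out Case~1; it produces a horseshoe in Case~1 directly. The key idea you are missing is to look at \emph{horizontal} rotation numbers in the annular lift. From $\rho_V=\{0\}$ one gets uniformly bounded vertical deviations, so $\hat h$ on $\A$ has compact $\omega$-limits everywhere and, using transitivity of $f$ only to conclude $\hat h$ is non-wandering, one is in the setting of Theorem~A of \cite{ferradura}. If $f$ had no horseshoe, the horizontal rotation number function $\rot_{\check h}$ on $\A$ would be well defined and continuous, hence (by the countable-image argument used in Proposition~\ref{prop:rational-rotationset}) constant. But the Dehn twist relation $\check h(\check z+(0,1))=\check h(\check z)+(m,1)$ forces $\rot(\hat z_0+(0,1),\check h)=\rot(\hat z_0,\check h)+m$ with $m\neq 0$, a contradiction. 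So Case~1 also yields a horseshoe; transitivity enters only to make $\hat h$ non-wandering, not to exclude the case.
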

	
	The main tools involved in this paper are Rotation Theory and Forcing Theory for Surface Homeomorphisms. The first one we discuss on Section \ref{sec:preliminaries}, as well as some other basic dynamical lemmas. In Section \ref{sec:forcing} we discuss some particular results from Forcing Theory, the main new tools developed recently that allow us to obtain the main results of this paper. Section \ref{sec:thmA} and \ref{sec:thmB} we present the proofs of Theorem A and Theorem B, respectively.
	
	\section{Preliminaries}\label{sec:preliminaries}
	We will endow $\R^2$ with its usual orientation and with the Euclidean scalar product $\langle\cdot\;,\cdot\rangle$. We will write $||\cdot||$ and $\dist(\cdot\,,\cdot)$ for the associated norm and metric, respectively. We will consider the $n$-dimensional torus $\T^n$ as the quotient space $\R^n/\Z^n$ and we will denote by $\A=\T^1\times\R$ the (vertical) annulus. Also, we will denote the $2$-dimensional sphere by $\mathbb{S}^2$.

	Let $S$ be an oriented surface. 
	A subset $X$ of  $S$ is called an \textit{open disk} if it is homeomorphic to $D=\{z\in\R^2,\;||z||<1|\}$, a \textit{closed disk} if it is homeomorphic to $\overline{D}=\{z\in\R^2,\;||z||\leq1\}$, and it is called an \textit{annulus} if it is homeomorphic to $\T^1\times J$, where $J$ is a non-trivial interval of $\R$. If $J=[0,1]$ or $J=(0,1)$ we have a \textit{closed} and an \textit{open annulus}, respectively. 
	
	Let $\homeo(S)$ be the space of {homeomorphisms} defined on $S$, furnished with the $C^0$ topology. Given $f\in\homeo(S)$ and $z\in S$, the \textit{$\alpha$-limit} and \textit{$\omega$-limit sets of $z$} are, respectively,
	\[\alpha_f(z)=\bigcap_{k\leq 0}\overline{\left\{f^n(z)\;|\; n\leq k \right\}},\;\;\mbox{and}\;\;\omega_f(z)=\bigcap_{k\geq 0}\overline{\left\{f^n(z)\;|\; n\geq k \right\}}.\]

	We say that a point $z\in S$ is \textit{positively} or \textit{negatively recurrent} if $z\in\omega_f(z)$ or $z\in\alpha_f(z)$, respectively, and \textit{bi-recurrent} if it is both positively and negatively recurrent.
	
	\begin{lemma}\label{lemma:transitive}
		Let $f\in\homeo(S)$. Take any integer $q\geq1$ and let $g=f^q$. If there is a point {$z\in S$} such that $\omega_{f}(z)=S$ then \[\bigcup_{r=0}^{q-1}\omega_g(f^{r}(z))=S.\] 
	\end{lemma}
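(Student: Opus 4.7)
The plan is to use a pigeonhole argument on residues modulo $q$. The reverse inclusion $\bigcup_{r=0}^{q-1}\omega_g(f^r(z))\subseteq S$ is automatic, so the content is in showing $S\subseteq\bigcup_{r=0}^{q-1}\omega_g(f^r(z))$.

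First, I would fix an arbitrary point $p\in S=\omega_f(z)$. By definition of the $\omega$-limit under $f$, there exists a strictly increasing sequence of non-negative integers $n_i\to\infty$ with $f^{n_i}(z)\to p$. For each $i$ write $n_i=k_i q+r_i$ where $r_i\in\{0,1,\ldots,q-1\}$. Since there are only finitely many possible residues, the pigeonhole principle yields some fixed $r\in\{0,\ldots,q-1\}$ such that $r_i=r$ for infinitely many indices $i$. Restricting to that subsequence, we still have $f^{n_i}(z)\to p$, and because $n_i\to\infty$ with $r$ bounded, the corresponding quotients $k_i=(n_i-r)/q$ must also tend to infinity.

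Next, I would observe the elementary identity
\[
f^{n_i}(z)=f^{k_i q+r}(z)=f^{k_i q}\bigl(f^r(z)\bigr)=g^{k_i}\bigl(f^r(z)\bigr).
\]
Thus $g^{k_i}(f^r(z))\to p$ with $k_i\to\infty$, which is exactly the condition that $p\in\omega_g(f^r(z))$. Since $p\in S$ was arbitrary, we conclude $S\subseteq\bigcup_{r=0}^{q-1}\omega_g(f^r(z))$, giving the claim.

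There is essentially no obstacle here: the argument is a routine pigeonhole on the $q$ residue classes combined with the fact that any subsequence of $\{f^n(z)\}_{n\ge 0}$ converging to $p$ through indices in a fixed arithmetic progression of common difference $q$ realises $p$ as a $\omega_g$-limit of the appropriate starting point $f^r(z)$. The only mild care needed is to ensure $k_i\to\infty$ when passing to the subsequence, which follows immediately since $n_i\to\infty$ and $r$ is fixed.
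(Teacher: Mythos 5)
Your proposal is correct and follows essentially the same route as the paper: decompose the exponents as $n_i=k_iq+r_i$, apply the pigeonhole principle to fix a residue $r$, and conclude that the subsequence $g^{k_i}(f^r(z))$ converges to the chosen point. Your explicit remark that $k_i\to\infty$ is a small point the paper leaves implicit, but there is no substantive difference between the two arguments.
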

	\begin{proof}
		Let $w\in S$ be any point. As $\omega_f(z)=S$ then $w\in\omega_f(z)$. So,
		there is an increasing sequence $(n_k)_{k\in\N}$ of positive integers such that $n_k\nearrow+\infty$ and \[\lim_{k\to+\infty}f^{n_k}(z)=w.\]

		As $n_k\in\N$ for all $k\in\N$, there exist $m_k\in\N$ and $r_k\in\{0,1,\cdots,q-1\}$ such that $n_k=m_kq+r_k$. Then, we have \[f^{n_k}(z)=f^{m_kq+r_k}(z)=g^{m_k}(f^{r_k}(z)),\;\mbox{for each}\;k\in\N.\]
		
		{By the Pigeonhole Principle  there exists} a subsequence $(r_{k_i})_{i\in\N}$ of $(r_k)_{k\in\N}$ and  $r\in\{0,1,\cdots,q-1\}$ such that $r_{k_i}=r$ for all $i\in\N$. So, if we consider the subsequence  \[\left(g^{m_{k_i}}(f^{r_{k_i}}(z))\right)_{i\in\N}=\left(g^{m_{k_i}}(f^{r}(z))\right)_{i\in\N}\subset\left(g^{m_{k}}(f^{r_{k}}(z))\right)_{k\in\N},\] as a subsequence of a convergent sequence also converges to the same limit of it, we have \[\lim_{i\to+\infty}g^{m_{k_i}}(f^{r}(z))=\lim_{k\to+\infty}g^{m_{k}}(f^{r_{k}}(z))=\lim_{k\to+\infty}f^{n_k}(z)=w.\]
		
		Therefore, there exists some $r\in\{0,1,\cdots,q-1\}$ such that $w\in\omega_g(f^r(z))$, or equivalently, $w\in\bigcup_{r=0}^{q-1}\omega_g(f^{r}(z))$ as we wanted.
	\end{proof}
	
	We can also establish an analogous result for the $\alpha$-limit set:
	
	\begin{lemma}
		Let $f\in\homeo(S)$. Take any integer $q\geq1$ and let $g=f^q$. If there is a point $z\in\T^2$ such that $\alpha_{f}(z)=S$ then \[\bigcup_{r=0}^{q-1}\alpha_g(f^{-r}(z))=S.\] 
	\end{lemma}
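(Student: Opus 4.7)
The plan is to imitate the argument of Lemma \ref{lemma:transitive} almost verbatim, with negative iterates replacing positive ones. The only conceptual observation needed is that if $f^q = g$, then $f^{-q}=g^{-1}$, so a negative power of $f$ can be rewritten as a negative power of $g$ composed with a back-shift by some $r\in\{0,\ldots,q-1\}$.

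More precisely, given any $w\in S$, since $\alpha_f(z)=S$ and hence $w\in\alpha_f(z)$, I can choose an increasing sequence $(n_k)_{k\in\N}$ of positive integers with $n_k\nearrow+\infty$ and
\[
\lim_{k\to+\infty}f^{-n_k}(z)=w.
\]
Writing each $n_k=m_kq+r_k$ with $m_k\in\N$ and $r_k\in\{0,1,\ldots,q-1\}$ (note $m_k\to+\infty$), I would rewrite
\[
f^{-n_k}(z)=f^{-m_kq-r_k}(z)=g^{-m_k}(f^{-r_k}(z)).
\]

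Next, by the Pigeonhole Principle, at least one value $r\in\{0,\ldots,q-1\}$ is attained infinitely often by $(r_k)_{k\in\N}$; extracting a subsequence $(k_i)_{i\in\N}$ with $r_{k_i}=r$ gives
\[
\lim_{i\to+\infty}g^{-m_{k_i}}(f^{-r}(z))=\lim_{i\to+\infty}f^{-n_{k_i}}(z)=w,
\]
so $w\in\alpha_g(f^{-r}(z))\subset\bigcup_{r=0}^{q-1}\alpha_g(f^{-r}(z))$. Since $w\in S$ was arbitrary, this gives the desired equality. The reverse inclusion is automatic since each $\alpha_g(f^{-r}(z))\subset S$.

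There is no real obstacle here: the proof is a direct transcription of the previous lemma, using that $\alpha_f$ is the $\omega$-limit under $f^{-1}$. The only minor point of care is ensuring $m_k\to+\infty$ so the resulting subsequence indeed gives an $\alpha$-limit (not merely an accumulation of a finite orbit segment), which follows immediately from $n_k\to+\infty$ and $0\le r_k<q$.
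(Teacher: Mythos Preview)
Your proof is correct and is exactly the analogous argument the paper intends: the paper does not spell out a proof for this lemma but simply states that an analogous result to Lemma~\ref{lemma:transitive} holds for the $\alpha$-limit set, and your transcription with negative iterates (writing $f^{-n_k}(z)=g^{-m_k}(f^{-r_k}(z))$ and applying the Pigeonhole Principle) is precisely that analogue.
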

	
	Recall that, if $f\in\homeo(S)$ is a topologically transitive map (or just a {\it{ transitive map}} for simplicity) on a compact surface then there is some $z\in S$ such that $\omega_f(z)=\alpha_f(z)=S$.
	Given $f\in\homeo(S)$ we say that a point $ z \in S $  is \textit{non-wandering for $f$}\label{nonwandering} if any neighborhood $ U \subset S $ of $ z $ is such that $ f^n (U) \cap U \neq \emptyset $ for some $ n \in \N $. Otherwise, the point is said \textit{wandering for $f$}. We represent the set of the non-wandering points for $f$ by $ \Omega (f) $. Moreover, we say that $f\in\homeo(S)$ is a non-wandering homeomorphism if every point $z\in S$ is non-wandering for $f$, that is $\Omega(f)=S$. \label{def:nonwandering}
	
	The following definition is about Birkhoff recurrence classes.
	
	\begin{definition}\label{def:birkhoff}
		Let $X$ be a metric space, $f\in\homeo(X)$,  
		\begin{enumerate}
			\item Given $z_1 , z_2$ two points of $X$, we say that there exists a \textit{Birkhoff connection from $z_1$ to $z_2$} if,
			for every neighborhood $W_1$ of $z_1$ and every neighborhood $W_2$ of $z_2$, there is {$n \geq 1$} such that $W_1\cap f^{-n}(W_2)\neq\emptyset$.
			\item A \textit{Birkhoff cycle} is a finite sequence $(z_i)_{i\in\Z/p\Z}$  of points in $X$ such that for every $i\in\Z/p\Z$ there is a Birkhoff connection from $z_i$ to $z_{i+1}$. 
			\item A point $z$ is said to be \textit{Birkhoff recurrent for f} if there is a Birkhoff cycle containing $z$. 
			\item We can define an equivalence relation in the set of Birkhoff recurrent points: say that \textit{$z_1$ is Birkhoff equivalent to $z_2$} if there exists a Birkhoff cycle containing both points. The equivalence classes will be called \textit{Birkhoff recurrence classes}.  
		\end{enumerate}
	\end{definition}
	Note  that every $\omega$-limit set or $\alpha$-limit set is contained in a single Birkhoff recurrence class. 
	
	Before finishing this subsection, let us fix some notations. Recall that, given $f\in\homeo(S)$, a \textit{lift} of $f$ to a covering space $\breve S$ of $S$ is a homeomorphism $\breve f:\breve S\to\breve S$ such that $f\circ \breve\pi=\breve\pi\circ\breve f$, where $\breve\pi:\breve S\to S$ is a covering projection map.

	\begin{remark}\label{remark:horseshoe}
		It follows from Definition \ref{def:horseshoe} that given $f\in\homeo(S)$ and a lift $\breve f\in\homeo(\breve S)$ of $f$, if $\breve f$ has a topological horseshoe then $f$ also has a topological horseshoe.
	\end{remark}

	Let $\breve S$ be the universal covering of $S$, that is a covering space that is simply connected, and $\breve\pi:\breve S\to S$ be a universal covering projection. A \textit{covering automorphism of $S$} is a homeomorphism $T:\breve S\to\breve S$ such that $\breve\pi \circ T=\breve\pi$. 
	The set of covering automorphism of the surface $S$ will be denoted by $\deck(S)$. 
	
	As is already known, we have that $\R^2$ is the universal covering space of $\T^2$ and  $\A$. Moreover $\A$ is also a covering space of $\T^2$. 
	\begin{notation*}\label{not:notacao} Let $\R^2$, $\T^2$ and $\A$ be as before, then:
		\begin{enumerate}
			\item A point in $\R^2$ will be denoted with a check mark: $\check z=(\check x,\check y)\in\R^2$. A point in $\A$ will be denoted with a hat mark: $\hat z=(\hat x,\hat y)\in\A$. And, finally, a point in $\T^2$ will be denoted without any mark: $z=(x,y)\in\T^2$;
			\item $\pr_1: \R^2\to\R$ and $\pr_2: \R^2\to\R$ denote the two canonical projections: $\pr_1(\check x,\check y)=\check x$ and $\pr_2(\check x,\check y)=\check y$. And we will also use $\pr_2$ to denote the second coordinate (vertical) projection in $\A$;
			\item $\check\pi:\R^2\to\T^2$ denotes the canonical universal covering projection of the $2$-torus: \[\check\pi((\check x,\check y))=(\check x+\Z,\check y+\Z).\]
			\item $\check\tau:\R^2\to\A$ denotes the canonical universal covering projection of the annulus: \[\check\tau((\check x,\check y))=(\check x+\Z, \check y).\]
			\item $\hat\pi:\A\to\T^2$ denotes the covering projection from the annulus to the $2$-torus, such that \[\check\pi=\hat\pi\circ\check\tau.\]
			\item A self-homeomorphism of $\R^2$ will be denoted with a check mark: $\check f\in\homeo(\R^2)$. For self-homeomorphisms of $\A$ we will  denote them with a hat mark: $\hat f\in\homeo(\A)$. And in turn, a self-homeomorphism of $\T^2$ will be denoted without any mark: $f\in\homeo(\T^2)$.	
		\end{enumerate}
	\end{notation*}
	
	\vspace{.15cm}Finally, we will endow $\T^2$ and $\A$ with the following metric:
	\[\begin{array}{l}
		d_{\T^2}(z',z)=\inf\{d(\check{z}',\check{z})\;|\;\check{z}'\in\check\pi^{-1}(z'),\check{z}\in\check\pi^{-1}(z)\},\;\mbox{and}\vspace{.2cm}\\
		d_{\A}(\hat z',\hat z)=\inf\{d(\check{z}',\check{z})\;|\;\check{z}'\in\check\tau^{-1}(\hat z'),\check{z}\in\check\tau^{-1}(\hat z)\},\;\mbox{respectively.}
	\end{array}\] 
	
	\subsection{Rotation theory for 2-torus homeomorphism}
	Given a surface $S$, we will denote by $\homeo_0(S)$ the subspace of $\homeo(S)$ of the homeomorphisms that are isotopic to identity.
	
	Let $f\in\homeo_0(\T^2)$ and $\check\pi:\R^2\to\T^2$ the canonical universal covering projection of $\T^2$. 
	Misiurewicz and Ziemian in \cite{rotationset} defined the rotation set of a lift $\check{f}\in\homeo(\R^2)$ of  $f$, denoted by $\rho(\check{f})$, as the set of all limit points of the following sequences
	\begin{equation*}
		\left(\frac{\check{f}^{n_k}(\check{z}_k)-\check{z}_k}{n_k}\right)_{k\geq 1},
	\end{equation*}
	where $(n_k)_{k\geq 1}$ is an increasing sequence of integers and $(\check{z}_k)_{k\geq 1}\subset\R^2$.
	
	If a point $z\in\T^2$ is such that the limit \[\lim_{n\to+\infty}\dfrac{\check f^n(\check z)-\check z}{n}\;\;\mbox{exists,}\] for any $\check z\in\check\pi^{-1}(z)$, then $\rho(z,\check f)$ is denoted as the limit above and it is called the rotation vector of $z$.
	
	We will say that $v\in\R^2$ is a \textit{rational vector}, if both coordinates of $v$ are rationals numbers.  
	Let $v=\left(\frac{r_1}{s},\frac{r_2}{s}\right)\in\rho(\check f)$ be a rational vector such that $r_1,r_2$ and $s$ are integers mutually coprime (that means $\gcd(r_1,r_2,s)=1$), $v$ is said to be \textit{realized by a periodic orbit}\label{def:periodicorb} if there exists $z\in\T^2$ such that 
	$\check f^s(\check z)=\check z+(r_1,r_2)$
	for any $\check z\in\check\pi^{-1}(z)$.

	We can also use invariant measures to average the spatial displacement, instead of averaging the displacement for a single point.
	Denote by $\mathcal{M}(f)$ the set of all $f$-invariant Borel probability measures on $\T^2$ and $\mathcal{M}_E(f)$ its subset of ergodic measures. {For each $\mu\in\mathcal{M}(f)$ and each lift $\check f$ of $f$, define the \textit{rotation number} \[\rho_{\mu}(\check f)=\int_{\T^2}\varphi\operatorname{d}\mu,\] where the \textit{displacement function} $\varphi:\T^2\to\R^2$ is defined by  $\varphi(z)=\check{f}(\check{z})-\check{z}$, for any $\check{z}\in{\check\pi}^{-1}(z)$.}
	
	The following proposition collects some results about the rotation set.
	
	\begin{proposition}[see \cite{rotationset}]\label{prop:rotationset}
		Let $f\in\homeo_0(\T^2)$ and $\check f\in\homeo(\R^2)$ be some lift of $f$. The following properties hold:
		\begin{enumerate}
			\item $\rho(\check{f})$ is a non-empty, closed, compact and convex set of $\R^2$;
			\item $\rho(\check f^q+p)=q\rho(\check f)+p$, where $q\in\Z$ and $p\in\Z^2$;
			\item If $\mu\in\mathcal M_E(f)$ is such that $\rho_{\mu}(\check f)=a$ then for $\mu$-almost every point $z\in\T^2$ and any $\check{z}\in{\check\pi}^{-1}(z)$, \[\lim_{n\to\infty}\dfrac{\check{f}^{n}(\check{z})-\check{z}}{n}=a;\]
			\item If $a\in\rho(\check f)$ is an extremal point (in the sense of convex set) then there exists $\mu\in\mathcal{M}_E(f)$ such that $\rho_{\mu}(\check f)=a$;
			\item $\rho(\check f)=\rho_{mes}(\check f)$.
		\end{enumerate}
	\end{proposition}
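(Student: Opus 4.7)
The plan is to organize all five items around the \emph{displacement cocycle} $\varphi\colon \T^2 \to \R^2$ defined by $\varphi(z) = \check f(\check z) - \check z$ for any $\check z \in \check\pi^{-1}(z)$; this map is well-defined because $\check f$ commutes with integer translations, and it is continuous and bounded on the compact torus. Setting $M = \sup \|\varphi\|$ I obtain the a priori bound $\|\check f^n(\check z) - \check z\| \leq nM$, which immediately gives non-emptiness, boundedness, and (by a standard diagonal argument on the definition) closedness of $\rho(\check f)$ in item (1). Item (2) is a direct computation from the identity $(\check f^q + p)^n(\check z) = \check f^{qn}(\check z) + np$ and the definition of the rotation set. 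For item (3), Birkhoff's ergodic theorem applied to $\varphi \in L^1(\mu)$ yields convergence of the averages $\tfrac{1}{n}\sum_{k=0}^{n-1}\varphi(f^k(z))$ to $\int\varphi\,d\mu = a$ for $\mu$-almost every $z$, and the sum telescopes to $(\check f^n(\check z) - \check z)/n$.

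For convexity in (1) I would follow Misiurewicz--Ziemian. Given $v_1, v_2 \in \rho(\check f)$ realized along sequences $(n_{i,k}, \check z_{i,k})$ and a target $\lambda v_1 + (1-\lambda) v_2$ with $\lambda \in (0,1)$, the idea is to splice together many long orbit segments starting near realizers of $v_1$ and $v_2$, in time proportions close to $\lambda$ and $1-\lambda$. The bounded displacement $M$ controls the error made at each splice, and careful bookkeeping on the concatenated trajectory shows that its average displacement converges to the convex combination. This is the technical heart of the proposition.

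Item (5) splits into two inclusions. The inclusion $\rho_{mes}(\check f) \subset \rho(\check f)$ follows from (3) applied after an ergodic decomposition. For the converse, given $v \in \rho(\check f)$ realized along $(n_k, \check z_k)$, I would form the empirical measures $\mu_k = \tfrac{1}{n_k}\sum_{j=0}^{n_k-1}\delta_{f^j(z_k)}$ on the compact space $\T^2$, extract a weak-$\ast$ limit $\mu \in \mathcal M(f)$ by Krylov--Bogolyubov, and use continuity of $\varphi$ together with the telescoping identity to conclude $\rho_\mu(\check f) = v$. Item (4) then follows by writing an extremal $a$ as $\rho_\mu(\check f)$ via (5) and invoking the ergodic decomposition $\rho_\mu(\check f) = \int \rho_\nu(\check f)\,d\xi(\nu)$: extremality of $a$ in the convex set $\rho(\check f)$ forces $\xi$-almost every ergodic component $\nu$ to satisfy $\rho_\nu(\check f) = a$, and any such $\nu$ does the job.

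The main obstacle is the convexity assertion in (1); the remaining items reduce to the fact that $\varphi$ is a bounded continuous cocycle, combined with standard ergodic-theoretic inputs (Birkhoff's theorem, weak-$\ast$ compactness of the space of invariant probability measures, and the ergodic decomposition theorem).
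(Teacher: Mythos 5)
The paper does not prove this proposition at all: it is quoted verbatim from Misiurewicz--Ziemian \cite{rotationset}, so the benchmark is the argument in that reference. Measured against it, your items (2) and (3), the Krylov--Bogolyubov/empirical-measure argument giving $\rho(\check f)\subset\rho_{mes}(\check f)$, and the barycenter argument for item (4) are correct and are essentially the standard proofs. One dependency should be made explicit, though: the inclusion $\rho_{mes}(\check f)\subset\rho(\check f)$ does not follow from (3) and the ergodic decomposition alone. The decomposition writes $\rho_\mu(\check f)$ as an average of ergodic rotation vectors, each of which lies in $\rho(\check f)$ by (3), but to conclude that the \emph{average} lies in $\rho(\check f)$ you must already have convexity --- that is, the hard part of item (1). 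So both (1) and half of (5) hinge on the convexity claim.

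And that is exactly where the proposal has a genuine gap. The splicing scheme you sketch does not produce orbits of $f$: an orbit segment of a homeomorphism is determined by its initial point, and jumping from the end of a segment with average displacement near $v_1$ to the start of one with average displacement near $v_2$ yields a concatenation that is not an orbit segment; the bound $\|\varphi\|\le M$ only controls single-step displacements and gives no shadowing of such a pseudo-orbit (general homeomorphisms of $\T^2$ have no shadowing property), so ``the bounded displacement controls the error at each splice'' is not a mechanism. There is also a structural obstruction to repairing the argument in this form: nothing in your sketch uses that the torus is two-dimensional, so the same reasoning would prove convexity of the rotation set on $\T^d$ for all $d$, whereas Misiurewicz and Ziemian themselves exhibited a homeomorphism of $\T^3$ homotopic to the identity with non-convex rotation set. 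Any correct proof must use dimension two essentially; the proof in \cite{rotationset} does so, working with the compact connected displacement sets $D_n=\{(\check f^n(\check z)-\check z)/n:\check z\in\R^2\}$ and a genuinely planar argument to show that the segment joining two rotation vectors is approximated by points of $D_n$ for large $n$. Until convexity is established by such an argument, item (1) and the inclusion $\rho_{mes}(\check f)\subset\rho(\check f)$ in item (5) remain unproved in your outline.
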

	
	Furthermore, under topological conjugacies, the rotation set has the following behavior:
	\begin{lemma}[Lemma 2.4 in \cite{kk08}]\label{lemma:kk08}
		Let $f, g\in\homeo(\T^2)$ and $A\in\operatorname{GL}(2,\Z)$. Suppose that $f\in\homeo_0(\T^2)$ and $g$ is isotopic to $f_A$, the linear automorphism induced by $A$ on $\T^2$. Let $\check f$ and $\check g$ be respective lifts of $f$ and $g$ to $\R^2$, then \[\rho(\check g\circ\check f\circ \check g^{-1})=A\rho(\check f).\] In particular, $\rho(A\check fA^{-1})=A\rho(\check f)$.
	\end{lemma}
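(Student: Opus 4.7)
The plan is to reduce the statement to an elementary bounded-error computation, using that any lift of a map isotopic to $f_A$ differs from the linear map $A$ by a $\Z^2$-periodic (hence bounded) function.

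First I would verify that $\rho(\check g\circ\check f\circ\check g^{-1})$ is well defined. Since $f\in\homeo_0(\T^2)$, conjugating by $g$ gives a homeomorphism isotopic to $g\circ\mathrm{id}\circ g^{-1}=\mathrm{id}$, so $g\circ f\circ g^{-1}\in\homeo_0(\T^2)$, and $\check g\circ\check f\circ\check g^{-1}$ is a lift of this composition. Next, because $g$ is isotopic to $f_A$, the induced action on $\pi_1(\T^2)=\Z^2$ coincides with $A$, so any lift $\check g$ satisfies $\check g(\check z+p)=\check g(\check z)+Ap$ for every $p\in\Z^2$. Setting $\phi(\check z):=\check g(\check z)-A\check z$, one checks $\phi(\check z+p)=\phi(\check z)$, hence $\phi$ is $\Z^2$-periodic and therefore bounded: $\|\phi\|_\infty=:M<\infty$.

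The key computation is then as follows. Fix $n\geq 1$ and $\check z\in\R^2$, let $\check w=\check g^{-1}(\check z)$, and set $v_n:=\check f^n(\check w)-\check w$. Then
\[
(\check g\circ\check f\circ\check g^{-1})^n(\check z)-\check z=\check g(\check f^n(\check w))-\check g(\check w)=A v_n+\phi(\check w+v_n)-\phi(\check w),
\]
so that
\[
\left\|\,(\check g\circ\check f\circ\check g^{-1})^n(\check z)-\check z\;-\;A\bigl(\check f^n(\check w)-\check w\bigr)\,\right\|\leq 2M.
\]
Dividing by $n$ and taking $n\to\infty$ shows that every accumulation point of $\frac{1}{n_k}\bigl((\check g\check f\check g^{-1})^{n_k}(\check z_k)-\check z_k\bigr)$ is of the form $A\cdot\lim\frac{1}{n_k}\bigl(\check f^{n_k}(\check w_k)-\check w_k\bigr)$ with $\check w_k=\check g^{-1}(\check z_k)$, hence belongs to $A\rho(\check f)$. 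Thus $\rho(\check g\circ\check f\circ\check g^{-1})\subseteq A\rho(\check f)$.

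For the reverse inclusion I would apply the same estimate to the map $\check g^{-1}$, which is a lift of $g^{-1}$ and satisfies $\check g^{-1}(\check z+p)=\check g^{-1}(\check z)+A^{-1}p$; writing $\check f=\check g^{-1}(\check g\check f\check g^{-1})\check g$ and repeating the argument yields $\rho(\check f)\subseteq A^{-1}\rho(\check g\check f\check g^{-1})$, i.e.\ $A\rho(\check f)\subseteq\rho(\check g\check f\check g^{-1})$. The ``in particular'' claim follows by taking $\check g:=A\in\homeo(\R^2)$, which is itself a lift of $f_A$. The only delicate point, and the one I would justify carefully, is the structural identity $\check g(\check z+p)=\check g(\check z)+Ap$: everything else reduces to bounded-error accounting that vanishes in the $1/n$-limit defining the rotation set.
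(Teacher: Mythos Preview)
Your argument is correct and is the standard proof of this fact. Note, however, that the paper does not supply its own proof of this lemma: it is stated as ``Lemma~2.4 in \cite{kk08}'' and simply cited from that reference, so there is no in-paper argument to compare against. Your bounded-error approach via the $\Z^2$-periodic cocycle $\phi(\check z)=\check g(\check z)-A\check z$ is exactly the natural route and matches how such results are usually established in the literature.
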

	
	\begin{remark}\label{remark:kk08}
		The map $A\check{f}A^{-1}$ is a lift to $\R^2$ of a homeomorphism of $\T^2$ topologically conjugated to ${f}$, namely $f_A\circ f\circ f_A^{-1}$ where $f_A$ is the automorphism of $\T^2$ induced by $A$. Therefore,  if we wish to prove the existence of a property for $f$ that is invariant by conjugation, Lemma \ref{lemma:kk08} tell us that we can consider some simpler case: where the rotation set is the image of $\rho(\check f)$ by some element of $\operatorname{GL(2,\Z)}$.
	\end{remark}

	We will say that a non-degenerate line segment $l\subset\R^2$ is a \textit{non-degenerate line segment with rational slope} if there is some $\lambda\in\R\backslash\{0\}$ such that the direction of $l$ is $v=\lambda(v_1,v_2)$ where $(v_1,v_2)\in\Z\backslash\{(0,0)\}$.
	
	By a \textit{horizontal segment} and \textit{vertical segment} we mean a non-degenerate line segment whose its direction is given by $\pm(1,0)$ and $\pm(0,1)$, respectively. 
	
	The next corollary will be very useful in the sense of Remark \ref{remark:kk08}.
	
	\begin{corollary}\label{cor:kk08}
		If $\rho(\check{f})$ is a non-degenerate line segment of rational slope then there is a matrix $A\in\operatorname{GL}(2,\Z)$ such that $\rho(A\check{f}A^{-1})$ is a horizontal segment.
	\end{corollary}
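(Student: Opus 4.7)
The plan is to reduce the statement to an elementary linear-algebra fact via Lemma \ref{lemma:kk08}: since that lemma guarantees $\rho(A\check f A^{-1})=A\rho(\check f)$ for any $A\in\operatorname{GL}(2,\Z)$, it suffices to exhibit an integer unimodular matrix $A$ that maps the direction vector of the given segment $\rho(\check f)$ to a horizontal direction. The whole corollary then becomes a statement about the action of $\operatorname{GL}(2,\Z)$ on primitive integer vectors.

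First I would normalize the direction. By hypothesis the direction of $\rho(\check f)$ is proportional to some $(v_1,v_2)\in\Z^2\setminus\{(0,0)\}$; dividing out by $d=\gcd(v_1,v_2)$, I may assume $(v_1,v_2)$ is a primitive integer vector, i.e.\ $\gcd(v_1,v_2)=1$. This replacement does not change the direction of the segment, only its parametrization.

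Next I would invoke B\'ezout's identity to obtain integers $a,b$ with $av_1+bv_2=1$, and then set
\[
A=\begin{pmatrix} a & b \\ -v_2 & v_1 \end{pmatrix}.
\]
A direct computation gives $\det A = av_1+bv_2 = 1$, so $A\in\operatorname{SL}(2,\Z)\subset\operatorname{GL}(2,\Z)$, and
\[
A\begin{pmatrix}v_1\\v_2\end{pmatrix}=\begin{pmatrix} av_1+bv_2 \\ -v_1v_2+v_1v_2\end{pmatrix}=\begin{pmatrix}1\\0\end{pmatrix}.
\]
Thus $A$ sends the direction of $\rho(\check f)$ to the horizontal direction.

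Finally I would apply Lemma \ref{lemma:kk08} (as explained in Remark \ref{remark:kk08}): since $A\in\operatorname{GL}(2,\Z)$, the map $A\check f A^{-1}$ is a lift of the homeomorphism $f_A\circ f\circ f_A^{-1}$ of $\T^2$ (which is isotopic to $f_A\circ f_{\operatorname{id}}\circ f_A^{-1}=f_{\operatorname{id}}$, i.e.\ to the identity), and $\rho(A\check f A^{-1})=A\rho(\check f)$. Being the image of a line segment under an invertible linear map whose action carries the segment's direction to $(1,0)$, this is a horizontal line segment, finishing the proof. There is no real obstacle here; the only subtlety is remembering to first extract a primitive direction vector so that B\'ezout applies and yields a determinant-one matrix, keeping us inside $\operatorname{GL}(2,\Z)$.
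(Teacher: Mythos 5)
Your proposal is correct and follows essentially the same route as the paper: extract a primitive (coprime) integer direction vector, use B\'ezout to build the unimodular matrix $A=\begin{pmatrix} a & b\\ -v_2 & v_1\end{pmatrix}$ sending that direction to $(1,0)$, and then conclude via Lemma \ref{lemma:kk08} that $\rho(A\check f A^{-1})=A\rho(\check f)$ is a horizontal segment. The only difference is cosmetic (your explicit normalization by the gcd, which the paper phrases by choosing $v_1,v_2$ coprime from the start).
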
 
	\begin{proof}
		Indeed, if $\rho(\check f)$ has a rational slope then there are $v_1,v_2\in\Z$ coprimes and $\lambda\in\R\backslash\{0\}$ such that the direction of $\rho(\check f)$ is given by $v=\lambda\left(v_1,v_2\right)$.
		
		Then Bezout's Identity implies that there exist $x,y\in\Z$ such that $p_1x+p_2y=1$. Thus the matrix \[A=\begin{pmatrix}
			x & y\\
			-v_2 & v_1
		\end{pmatrix}\] is in $\operatorname{GL}(2,\Z)$ (in fact, $A\in\operatorname{SL}(2,\Z)$), and $Av=\begin{pmatrix}
			\lambda\\0
		\end{pmatrix}\vspace{.05cm}$. Therefore the direction of $A\rho(f)$ is given by $\dfrac{Av}{||Av||}=\pm(1,0)\vspace{.1cm}$, which is a horizontal segment, and by Lemma \ref{lemma:kk08}, we have that $\rho(A\check{f}A^{-1})=A\rho(f)$.
	\end{proof}
	
	Let $\check f\in\homeo(\R^2)$, we say that the \textit{deviations in the direction of some nonzero vector $v\in\R^2$ are uniformly bounded}, if there exists a real number $M>0$ such that \[\left|\pr_{v}(\check f^n(\check z)-\check z)\right|\leq M,\;\;\forall n\in\Z\;\;\mbox{and}\;\;\forall \check z\in\R^2,\] where $\pr_{v}(\cdot)=\langle\cdot,\frac{v}{||v||}\rangle$.
	
	\begin{definition}\label{def:annular}
		Let $f\in\homeo_0(\T^2)$.  We say that $f$ is \textit{annular} if there is an integer $q\geq 1$ and a lift $\check g\in\homeo(\R^2)$ of $g=f^q$ such that the deviations in the direction of some nonzero integer vector $v\in\Z^2$ are uniformly bounded.
	\end{definition}
	
	The next theorem, which we use often in this work, is due to D{\'a}valos, improving on a similar result for non-wandering homeomorphisms by Guelman-Koropecki-Tal (\cite{GKM}): 
	
	\begin{theorem}[Theorem A in \cite{davalos18}]\label{teo:davalos}
		If $\rho(\check f)$ is a non-degenerate line segment with rational slope containing rational points, then $f$ is annular in the direction perpendicular to $\rho(\check f)$.
	\end{theorem}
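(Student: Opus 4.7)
The plan is to reduce to a normalized setup via conjugation, then argue by contradiction using rotation theory together with Brouwer/forcing type tools, producing a rotation vector transverse to $\rho(\check f)$.

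First, we normalize. Being annular in the direction perpendicular to $\rho(\check f)$ is invariant under the conjugation described in Remark \ref{remark:kk08}, so Corollary \ref{cor:kk08} lets us assume that $\rho(\check f)=[\alpha,\beta]\times\{0\}$ with $\alpha<\beta$. Pick a rational point $(p/q,0)$ on this segment. By Proposition \ref{prop:rotationset}(2), replacing $\check f$ by $\check f^{q}-(p,0)$ we may further assume $0\in\rho(\check f)$, and this normalization does not affect the annularity conclusion (up to the integer $q$ in Definition \ref{def:annular}). By standard realization results for rational points of the torus rotation set, each such rational point is realized by an $f$-periodic orbit whose lift in $\R^2$ has purely horizontal displacement; these will serve as anchor orbits.

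Next, we pass to the annular covering. Let $\hat f\in\homeo(\A)$ be the lift of $f$ associated with $\check f$ via $\check\tau$ (this descends since $\check f$ commutes with horizontal unit translations). The desired conclusion is that there exists $M>0$ with $|\pr_2(\hat f^{n}(\hat z)-\hat z)|\leq M$ for all $n\in\Z$ and all $\hat z\in\A$. The anchor periodic orbits project to periodic orbits of a power of $\hat f$ that, up to horizontal translation, stay at bounded vertical heights and therefore play the role of ``topological walls'' in $\A$.

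Assume, for contradiction, that vertical deviations are unbounded: there are $\hat z_k\in\A$ and $n_k\geq 1$ with $\pr_2(\hat f^{n_k}(\hat z_k)-\hat z_k)\to+\infty$ (the downward case is symmetric). One combines the anchor periodic orbits with the high-reaching orbits and exploits the Birkhoff recurrence structure (Definition \ref{def:birkhoff}) together with equivariant Brouwer/forcing arguments to show that every orbit climbing to height $N\gg 1$ must do so in time $O(N)$, i.e., the vertical return time to any fundamental horizontal strip is uniformly bounded along such orbits. Combined with Proposition \ref{prop:rotationset}(3)--(5), this yields an ergodic measure whose rotation vector has positive vertical component, contradicting $\rho(\check f)\subset\R\times\{0\}$.

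The main obstacle is precisely the last step: a priori $n_k$ may grow much faster than the vertical gain $\pr_2(\hat f^{n_k}(\hat z_k)-\hat z_k)$, so one only obtains a horizontal rotation vector in the direct limit. The subtle point is to convert unbounded but possibly very slow vertical excursions into a genuine linear rate of vertical drift, and this is where the anchor periodic orbits are essential: they constrain the dynamics in $\A$ strongly enough that a forcing/Brouwer argument produces closed trajectories enclosing large vertical displacements in bounded time, thereby upgrading sublinear excursions into a linear one and forcing a vertical rotation component.
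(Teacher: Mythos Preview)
The paper does not prove this statement at all: it is quoted as Theorem~A of D\'avalos \cite{davalos18} and used as a black box. So there is no ``paper's own proof'' to compare with; your proposal is an attempted independent proof of a cited external result.

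As an independent proof, your argument has a genuine gap, and you identify it yourself. The reduction steps (conjugating to a horizontal segment via Corollary~\ref{cor:kk08}, translating so that the origin lies in $\rho(\check f)$, passing to the annular cover) are fine and standard. The contradiction hypothesis is also set up correctly. But the entire content of D\'avalos's theorem lies in the step you label ``the main obstacle'': converting unbounded vertical deviations into a vector in $\rho(\check f)$ with nonzero second coordinate. You assert that ``a forcing/Brouwer argument produces closed trajectories enclosing large vertical displacements in bounded time, thereby upgrading sublinear excursions into a linear one,'' but you do not carry this out, and none of the tools you cite (Definition~\ref{def:birkhoff}, Proposition~\ref{prop:rotationset}(3)--(5)) come close to supplying it. A single anchor periodic orbit in $\A$ does not by itself bound vertical motion of other orbits, and there is no mechanism in your sketch that forces the time $n_k$ to be comparable to the vertical gain. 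This is exactly the hard part of \cite{davalos18}, which requires a substantial argument (D\'avalos uses a delicate analysis of stable sets and Brouwer theory specific to this situation, building on \cite{GKM}); it cannot be summarized as ``forcing/Brouwer type tools'' without further work.

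There is also a smaller issue: your appeal to ``standard realization results'' for rational points of the rotation set needs care. Franks' theorem realizes rational points in the \emph{interior} of $\rho(\check f)$; when $\rho(\check f)$ is a segment, the relevant realization statement (for rational points in its relative interior) is true but is itself a nontrivial result, and in any case a single realized periodic orbit does not obviously serve as a ``topological wall'' in $\A$ in the way you suggest.
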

	
	As a consequence of this theorem we have the following:
	
	\begin{corollary}\label{cor:davalos}
		If $\rho(\check f)$ is a non-degenerate line segment with rational slope containing the origin then there is a lift $\check f$ of $f$ which has uniformly bounded deviations in the direction perpendicular to $\rho(\check f)$.
	\end{corollary}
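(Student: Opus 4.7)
My plan is to invoke Theorem \ref{teo:davalos} to obtain bounded deviations for a power $g = f^q$, and then transfer this property back to $f$ itself. Since $\rho(\check f)$ is a non-degenerate line segment of rational slope containing the origin (which is a rational point), Theorem \ref{teo:davalos} yields an integer $q \geq 1$, a nonzero integer vector $v \in \Z^2$ perpendicular to $\rho(\check f)$ (such $v$ exists because $\rho(\check f)$ has rational slope), and a lift $\check g$ of $g = f^q$ with uniformly bounded deviations in direction $v$: $|\pr_v(\check g^n(\check z) - \check z)| \leq M$ for all $n \in \Z$ and $\check z \in \R^2$.

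Next I would compare the two natural lifts of $f^q$. Since $\check f^q$ is also a lift of $f^q$, there exists $p \in \Z^2$ with $\check g = \check f^q + p$. A short induction, using that $\check f^q$ commutes with integer translations, gives $\check g^n = \check f^{qn} + np$ for all $n \in \Z$. The key step is to show $\pr_v(p) = 0$: bounded deviation of $\check g$ in direction $v$ forces $\rho(\check g) \subset v^\perp$; Proposition \ref{prop:rotationset}(2) gives $\rho(\check g) = q\rho(\check f) + p$; and by our choice of $v$ we have $\rho(\check f) \subset v^\perp$. Combining these, $p \in v^\perp$, and consequently
\[\pr_v(\check f^{qn}(\check z) - \check z) = \pr_v(\check g^n(\check z) - \check z) - n\pr_v(p) = \pr_v(\check g^n(\check z) - \check z),\]
which is bounded by $M$ for every $n \in \Z$ and $\check z \in \R^2$.

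To finish, I would extend this bound from multiples of $q$ to arbitrary iterates. Writing $n = qk + r$ with $0 \leq r < q$, I decompose $\check f^n(\check z) - \check z = (\check f^{qk}(\check f^r(\check z)) - \check f^r(\check z)) + (\check f^r(\check z) - \check z)$; the first summand has $|\pr_v|$ bounded by $M$ by applying the previous bound with $\check f^r(\check z)$ in place of $\check z$, and the second is bounded by $(q-1)\sup_{\check w \in \R^2}\|\check f(\check w) - \check w\|$, which is finite because $\check z \mapsto \check f(\check z) - \check z$ is $\Z^2$-periodic and so descends to a continuous function on $\T^2$. The main obstacle is verifying $\pr_v(p) = 0$; without this vanishing, the linear term $np$ in $\check g^n = \check f^{qn} + np$ would grow unboundedly and destroy any hope of transferring bounded deviations from $g$ to $f$.
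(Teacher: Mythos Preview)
Your proof is correct and follows essentially the same approach as the paper's: invoke D\'avalos's theorem for a power $g=f^q$, show that the integer translation $p$ relating $\check g$ and $\check f^q$ has vanishing component in the perpendicular direction (the paper does this by noting $0\in\rho(\check f)$ implies $p\in\rho(\check g)$, you do it via the set inclusion $\rho(\check g)=q\rho(\check f)+p\subset v^\perp$; these are equivalent), and then handle the remainder $r\in\{0,\dots,q-1\}$ using that the displacement function is bounded. Your write-up is slightly more explicit about the induction $\check g^n=\check f^{qn}+np$ and about why the remainder term is bounded, but the structure is the same.
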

	\begin{proof}
		Let $v=(v_1,v_2)\in\Z^2\backslash\{(0,0)\}$ be a multiple of the direction of $\rho(\check f)$ and let $v^{\perp}=(-v_2,v_1)$. 
		By Theorem \ref{teo:davalos} as $\rho(\check f)$ is a non-degenerate line segment with rational slope containing the origin then there is an integer $q\geq 1$ and a lift $\check g:\R^2\to\R^2$ of $g=f^q$ such that the deviations in the  direction perpendicular to $\rho(\check f)$ are uniformly bounded, that is \[\left|\pr_{v^{\perp}}(\check g^n(\check z)-\check z)\right|\leq M,\;\;\forall n\in\Z\;\;\mbox{and}\;\;\forall \check z\in\R^2,\]
		{ which, in particular, implies that $\rho(\check g)$ is a line segment in the direction of $v$.}

		The lift $\check g\in\homeo(\R^2)$ of $g=f^q$ can be considered as $\check g=\check f^q$. Otherwise there would be  $p\in\Z^2\backslash\{(0,0)\}$ such that $\check g=\check f^q+p$ has uniformly bounded deviations in the direction perpendicular to the rotation set. But  { as, by assumption, $(0,0)\in\rho(\check f)$, and since $\rho(\check g)=q\rho(\check f)+p$, we have that $p$ is in $\rho(\check g)$, which implies that in this case, we must have that $p\in\Z^2\cap\{\lambda v\;|\;\lambda\in\R\}$.}
		
		So, we have that $p=\lambda v$, for some $\lambda\in\R$. And, as $\pr_{v^{\perp}}(\lambda v)=0$, there is no loss of generality {supposing} that $\check g=\check f^q$.

		As for all $n\in\N$ there is $m\in\N$ and $r\in\{0,1,\cdots,q-1\}$ such that $n=mq+r$ and as $\{0,1,\cdots,q-1\}$ is finite we must have 
		$\left|\pr_{v^{\perp}}(\check f^r(\check w)-\check w)\right|\leq M_0$ for some constant $M_0>0$, for all $r\in\{0,1\cdots,q-1\}$. So \[\begin{array}{rl}
			\left|\pr_{v^{\perp}}(\check f^n(\check w)-\check w)\right| = &\hspace{-.2cm}\left|\pr_{v^{\perp}}(\check f^{mq}(\check f^r(\check w))-\check w )\right|\vspace{.1cm}\\
			\leq&\hspace{-.2cm}\left|\pr_{v^{\perp}}(\check g^m(\check f^r(\check w))-\check f^r(\check w))\right|+\left|\pr_{v^{\perp}}(\check f^r(\check w)-\check w)\right|\vspace{.1cm}\\
			\leq & M+M_0,\;\;\forall n\in\Z\;\;\mbox{and}\;\;\forall \check w\in\R^2.
		\end{array}\]
		
		Which means that $\check f$ has uniformly bounded deviations in the direction perpendicular to $\rho(\check f)$.
	\end{proof}
	
	\subsection{Rotation theory for annulus homeomorphism}\label{subsec:rotation_annulus}
	
	Let us now consider the annulus $\mathbb{A}=\T^1\times\R$,  $\check\tau:\R^2\to\mathbb{A}$ its canonical universal covering projection and $\hat f\in\homeo_0(\A)$. Let $\check f\in\homeo(\R^2)$ be a lift of $\hat f$ to $\R^2$.
	
	We define the set $\Ne^+(\hat f)$ as the set of all points $\hat z\in\A$ such its $\omega$-limit set is non-empty, $\Ne^-(\hat f)=\Ne^+(\hat f^{-1})$ as the set of all points $\hat z\in\A$ such its $\alpha$-limit set is non-empty and, finally $\Ne(\hat f)=\Ne^+(\hat f)\cup\Ne^-(\hat f)$.
	
	Let $\hat f\in\homeo_0(\A)$ and $\check{f}\in\homeo(\R^2)$ be a lift of $\hat f$ to $\R^2$. As defined by Le Roux in \cite{leroux13} and Conejeros \cite{conejeros18}, { a point $\hat z\in\Ne^+(\hat f)$ has a \textit{rotation number}, denoted by $\rot(\hat z,\hat f)$,}
	if for every compact set $\hat K\subset\mathbb{A}$ and every increasing
	sequence of integers $(n_k )_{k\geq 1}$ such that $\hat f^{n_k}(\hat z)\in \hat K$, the limit \[\lim_{k\to\infty}\dfrac{\pr_1(\check{f}^{n_k}(\check{z})-\check{z})}{n_k}\;\mbox{exists,}\]
	for any $\check{z}\in\check\tau^{-1}(\hat z)$.

	The following two results are due to Le Calvez and the second author (\cite{ferradura}):
	
	\begin{theorem}[Theorem A in \cite{ferradura}]\label{teo:A}
		Let $\hat f$ be a homeomorphism isotopic to identity on $\mathbb{A}$ and $\check{f}$ a lift of $\hat f$ for $\R^2$. Suppose that $\hat f$ has no topological horseshoe, then
		\begin{enumerate}
			\item each point $\hat z\in\Ne^+(\hat f)$ has a well-defined rotation number $\rot(\hat  z,\check{f})$;
			\item {for all $\hat z,\hat z'\in\Ne^+(\hat f)$} such that $\hat z'\in\omega_{\hat f}(\hat z)$, we have $\rot(\hat  z', \check{f})=\rot(\hat  z, \check{f})$;
			\item if $z\in\Ne^+(\hat f)\cap\Ne^+(\hat f^{-1})$ is non-wandering, then $\rot(\hat  z, \check f^{-1})=-\rot(\hat  z, \check f)$;
			\item the map $\rot_{\check f^{\pm}}:\Omega(\hat f)\cap\Ne(\hat f)\to\R$ is continuous, where \[\rot_{\check f^{\pm}}(\hat z)=\left\{\begin{array}{ll}
				\rot(\hat  z, \check f) & \mbox{if}\;\; \hat z\in\Omega(\hat f)\cap\Ne^+(\hat f)\\
				-\rot(\hat  z, \check f^{-1}) & \mbox{if}\;\; \hat z\in\Omega(\hat f)\cap\Ne^-(\hat f).\\
			\end{array}\right.\]
		\end{enumerate}
	\end{theorem}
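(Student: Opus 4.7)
The plan follows the strategy underlying \cite{forcing,ferradura}: in each of (1)--(4) we argue by contradiction, and in the contrary case produce a pair of transverse trajectories whose intersections in the universal cover force a topological horseshoe via the Le~Calvez--Tal forcing lemma. The basic tool is an equivariant maximal hereditary singular isotopy $I$ from the identity to $\hat f$ on $\A$ together with an oriented singular foliation $\hat{\mathcal{F}}$ transverse to $I$. For every non-singular $\hat z$, the concatenated isotopy paths joining consecutive iterates $\hat f^n(\hat z)$ form an oriented path $\gamma_{\hat z}\subset\A$ positively transverse to $\hat{\mathcal{F}}$; its lift $\tilde\gamma_{\check z}\subset\R^2$ starts at $\check z\in\check\tau^{-1}(\hat z)$ and its horizontal progress mirrors $\pr_1(\check f^n(\check z)-\check z)$. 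The Forcing Lemma asserts that any $\hat{\mathcal{F}}$-transverse self-intersection of $\gamma_{\hat z}$ whose associated loop has nontrivial homology in $\A$ yields a topological horseshoe for $\hat f$.

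For (1), assume $\hat z\in\Ne^+(\hat f)$ admits two distinct subsequential rotation numbers $\alpha\neq\beta$ along sequences $(n_k),(m_k)$ for which $\hat f^{n_k}(\hat z)$ and $\hat f^{m_k}(\hat z)$ stay in a compact $\hat K\subset\A$. The two subsequences return arbitrarily close to each other in $\A$ but with horizontal drifts in $\R^2$ separated by approximately $|\alpha-\beta|\,N$ over $N$ steps, so one can extract deck translates of $\tilde\gamma_{\check z}$ that are forced to meet each other essentially in $\R^2$. Such a meeting projects to an $\hat{\mathcal{F}}$-transverse self-intersection of $\gamma_{\hat z}$ whose associated loop has nonzero horizontal winding in $\A$, producing a horseshoe and contradicting the hypothesis.

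Items (2) and (3) use the same pattern applied to pairs of orbits: if $\hat z'\in\omega_{\hat f}(\hat z)$ had a rotation number distinct from that of $\hat z$, a long initial segment of $\tilde\gamma_{\check z}$ shadowing the orbit of $\hat z$ and then the orbit of $\hat z'$ would acquire inconsistent asymptotic slopes, and a forced crossing appears exactly as in (1); for (3) one compares the forward transverse trajectory of $\hat f$ at a non-wandering $\hat z$ with the one obtained from $\hat f^{-1}$. Item (4) is then a continuity/compactness argument: if $(\hat z_n)\subset\Omega(\hat f)\cap\Ne(\hat f)$ converges to $\hat z$ but $\rot_{\check f^{\pm}}(\hat z_n)$ does not converge to $\rot_{\check f^{\pm}}(\hat z)$, one pairs two orbits with a definite rotational gap but arbitrarily close lifted transverse paths, whose crossing is forced as above.

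The main obstacle is step (1): making rigorous the passage from a quantitative mismatch of Birkhoff averages of horizontal displacement to an actual $\hat{\mathcal{F}}$-transverse intersection of the required homological type. This requires the fine control of the relation between the dynamical orbit and its transverse trajectory furnished by Le~Calvez's theorem on equivariant transverse foliations, together with the precise combinatorial form of the Forcing Lemma relating linked transverse paths to Bernoulli factors. Once this mechanism is in place, items (2)--(4) are variations on the same crossing argument with an additional continuity argument in (4).
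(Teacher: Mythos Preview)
The paper does not prove this theorem at all: it is quoted verbatim as Theorem~A of \cite{ferradura} and used as a black box in the proofs of Proposition~\ref{prop:rational-rotationset} and Theorem~B. There is therefore nothing in the present paper to compare your proposal against.

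That said, your sketch is in the spirit of the actual proof in \cite{ferradura}, but it is not a proof. The central difficulty, which you yourself flag, is exactly the point where the argument has content: turning a discrepancy of Birkhoff averages into a genuine $\hat{\mathcal{F}}$-transverse self-intersection of the right homotopical type. Your description (``deck translates of $\tilde\gamma_{\check z}$ are forced to meet each other essentially'') is not an argument; transverse paths can intersect without having an $\hat{\mathcal{F}}$-transverse intersection in the precise sense of Definition~\ref{def:Fintersection}, and the forcing lemma (Theorem~\ref{teo:horseshoe}) only triggers on the latter. In \cite{ferradura} this step goes through a substantial structure theory for transverse trajectories on the sphere (accumulation on transverse simple loops, the drawing/crossing dichotomy of Proposition~\ref{prop:20}, and the classification in Corollary~\ref{cor:prop23}), not through a direct ``slopes differ, hence paths cross'' heuristic. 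Items (2)--(4) in \cite{ferradura} likewise rest on that structure rather than on the informal shadowing picture you give.

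If your goal is to supply a self-contained proof here, you would need to import or reproduce that machinery; as written, the proposal identifies the right framework but leaves the substantive step as an assertion.
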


	Let $\hat f\in\homeo_0(\A)$, we will denote $\hat f_{\sphere}$ as the continuous extension of $\hat f$ to the sphere obtained by adding the two ends $N$ (superior end) and $S$ (inferior end) of $\A$.
	
	\begin{proposition}[Proposition $D$ in (\cite{ferradura})]\label{prop:D}
		Let $\hat f$ be a homeomorphism of $\A$ isotopic to the identity, $\check f$ a lift of $\hat f$ to $\R^2$. We suppose that:
		\begin{enumerate}
			\item $\check{f}$ has at least two rotation numbers well defined and distinct;
			\item $N$ and $S$ belong to the same Birkhoff recurrence class of $\hat f_{\sphere}$.	 	
		\end{enumerate}
		Then $\hat f$ has a topological horseshoe.
	\end{proposition}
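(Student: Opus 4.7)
The plan is to reason by contradiction: suppose $\hat f$ has no topological horseshoe, and try to show that, together with hypothesis (2), this forces the rotation number on $\Omega(\hat f)\cap\Ne(\hat f)$ to be constant, contradicting hypothesis (1). The first step is to invoke Theorem \ref{teo:A} in its full strength: every point of $\Ne^+(\hat f)$ has a well-defined rotation number, this number is constant on each $\omega$-limit set, and the extended function $\rot_{\check f^{\pm}}$ is continuous on $\Omega(\hat f)\cap\Ne(\hat f)$.

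Next I would use hypothesis (1) together with the annulus analog of Proposition \ref{prop:rotationset}(4) to select two distinct extremal rotation numbers $a_1\neq a_2$ of $\check f$, realized by ergodic $\hat f$-invariant probability measures $\mu_1,\mu_2$. The corresponding $\mu_j$-generic points $\hat z_j$ are bi-recurrent, lie in $\Omega(\hat f)\cap\Ne(\hat f)$, and satisfy $\rot_{\check f^{\pm}}(\hat z_j)=a_j$. The heart of the argument is then to use hypothesis (2) to absorb $\hat z_1$ and $\hat z_2$ into a single Birkhoff cycle of $\hat f_{\sphere}$ that also contains $N$ and $S$. The plan is to splice the given cycle joining $N$ and $S$ with Birkhoff connections from $\hat z_j$ to neighborhoods of the ends and back: such connections should follow from the recurrence of $\hat z_j$ together with the fact that any Birkhoff cycle between $N$ and $S$ must pass through arbitrarily small neighborhoods of these ends, into which the orbit of any nearby recurrent point must eventually return, by the topology of $\A_{\sphere}$.

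The main obstacle is the final step, namely deducing $a_1=a_2$ from the fact that $\hat z_1$ and $\hat z_2$ lie in a common Birkhoff recurrence class. Pointwise continuity of $\rot_{\check f^{\pm}}$ on $\Omega(\hat f)\cap\Ne(\hat f)$ does not automatically propagate rotation numbers along Birkhoff connections, because a Birkhoff recurrence class need not be connected and two of its points need not be close in $\A_{\sphere}$. The rigidity required here --- that the absence of a horseshoe forces $\rot_{\check f^{\pm}}$ to be constant on each Birkhoff recurrence class of $\hat f_{\sphere}$ --- is a finer forcing-theoretic statement of the type developed in \cite{ferradura}, and I would expect to establish it using Le Calvez's equivariant foliation associated with the isotopy from the identity to $\hat f$ and the mechanism that converts topologically transverse intersections of leaves into topological horseshoes.
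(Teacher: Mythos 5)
This proposition is not proved in the paper at all: it is quoted verbatim as Proposition D of \cite{ferradura}, so there is no internal proof to compare with, and your sketch must stand on its own. Its central step is missing. You reduce the whole statement to the claim that, in the absence of a topological horseshoe, $\rot_{\check f^{\pm}}$ is constant on each Birkhoff recurrence class of $\hat f_{\sphere}$, and you then say you would ``expect to establish'' this rigidity using the equivariant foliation and the forcing mechanism. But that rigidity statement is essentially the proposition itself in contrapositive form; deferring it to ``a finer forcing-theoretic statement of the type developed in \cite{ferradura}'' makes the argument circular. The actual work --- extracting from a Birkhoff cycle joining $N$ and $S$ orbit segments that travel between neighborhoods of the two ends, turning these (via the maximal isotopy and transverse foliation) into transverse paths that cross the annulus, and combining them with the transverse trajectories of the two points of distinct rotation number to produce an $\F$-transverse intersection so that Theorem \ref{teo:horseshoe} applies --- is nowhere carried out in your proposal, and you yourself flag it as the unresolved obstacle.

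Two auxiliary steps are also problematic. First, the splicing of $\hat z_1,\hat z_2$ into a Birkhoff cycle containing $N$ and $S$ does not follow from what you write: a Birkhoff connection from $\hat z_j$ to $N$ requires, for \emph{every} pair of neighborhoods $W_1\ni\hat z_j$, $W_2\ni N$, some $n\geq 1$ with $W_1\cap \hat f_{\sphere}^{-n}(W_2)\neq\emptyset$, whereas recurrence of $\hat z_j$ only gives returns of its own orbit to neighborhoods of $\hat z_j$; nothing forces the generic points of your measures to lie in the Birkhoff class of the ends, so even if the rigidity claim were available you would not obtain the contradiction. Second, replacing hypothesis (1) by extremal ergodic measures through an ``annulus analog'' of Proposition \ref{prop:rotationset}(4) is unjustified: $\A$ is not compact, invariant probability measures need not realize the rotation numbers of the two given points (a point can have a well-defined rotation number in the Le Roux--Conejeros sense while its orbit is not captured by any invariant measure), and hypothesis (1) only provides points of $\Ne^+(\hat f)$ that need not be non-wandering or bi-recurrent, so the continuity statement of Theorem \ref{teo:A} does not automatically apply to them. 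In short, the proposal correctly identifies the toolbox (Theorem \ref{teo:A}, the transverse foliation, Theorem \ref{teo:horseshoe}) but does not contain the idea that makes the proof work, namely how hypothesis (2) is converted into crossing transverse trajectories that force a horseshoe.
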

	
	\subsection{Singular oriented foliations}
	
	Let $S$ be an oriented surface. A \textit{path} is a continuous map $\gamma:J\to S$ where $J\subset\R$ is an interval. From now on, in absence of ambiguity, we will denoted the image of $\gamma$ also as $\gamma$ and call it by path. We will denote $\gamma^{-1}:-J\to S$ the path defined by $\gamma^{-1}(t)=\gamma(-t)$. If $X$ and $Y$ are two disjoint subset of $S$ we will say that a path $\gamma:[a,b]\to S$ \textit{joins $X$ to $Y$} if $\gamma(a)\in X$ and $\gamma(b)\in Y$. Moreover, if $\gamma:[a,b]\to S$ and $\gamma':[a',b']\to S$ are two paths such that $\gamma(b)=\gamma'(a')$ then we can concatenate the two paths and define the path $\gamma\gamma':[a,b+b'-a']\to S$
	\[\gamma\gamma'(t)=\begin{cases}
		\gamma(t) & \mbox{if}\;\; t\in[a,b];\\
		\gamma'(t-b+a') & \mbox{if}\;\; t\in[b,b+b'-a']\;.
	\end{cases}\]
	
	A path $\gamma:J\to S$ is \textit{proper} if $J$ is open and the preimage of every compact subset of $S$ is compact. If $S=\R^2$, a \textit{line} is an injective and proper path $\phi:J\to \R^2$, it inherits a natural orientation induced by the usual orientation on $\R$. We already know, by Jordan-Schoenflies Theorem, that the complement of a line $\phi$ has two connected components. We will denote $R(\phi)$ for the one that is on the right of $\phi$ and $L(\phi)$ for the other one that is on the left of it.

	A path $\gamma:\R\to S$ such that $\gamma(t+1)=\gamma(t)$ for every $t\in\R$ lifts a continuous map $\Gamma:\T^1\to S$. We will say that $\Gamma$ is a \textit{loop} and $\gamma$ is its \textit{natural lift}. When the map $\Gamma$ is also injective then $\Gamma$ will be called \textit{simple loop}. If $n\geq 1$, we denote $\Gamma^n$ the loop lifted by the path $t\mapsto \gamma(nt)$, and we will say that $\Gamma^n$ is a \textit{multiple} of the loop $\Gamma$.

	By a \textit{singular oriented foliation} on an oriented surface $S$ we mean a closed set $\sing(\F)$, called the \textit{set of singularities}, together with a topological oriented foliation $\F'$ on the complement of $\sing(\F)$, that we will denote by $\dom(\F):=S\backslash\sing(\F)$ and call it \textit{domain of $\F$}. 
	
	If $\sing(\F)$ is an empty set we will say that $\F$ is a non-singular oriented foliation. 
	
	By Whitney's Theorem, see \cite{w33} and \cite{w41}, any singular oriented foliation $\F$ can be embedded in a
	flow, which means that $\F$ is the set of oriented orbits of some topological flow $\Phi: S\times\R\to S$, where the singularities of $\F$ coincides with the set of the fixed points of $\Phi$. Thus we can define the $\omega$-limit and $\alpha$-limit sets of a leaf $\phi$ as follows: if $\phi$ is a leaf of $\F$ and $z\in\phi$ then \[\alpha(\phi)=\bigcap_{n\geq 0}\overline{\left\{\Phi(z,t)\;|\;t\leq n\right\}},\;\;\mbox{and}\;\;\omega(\phi)=\bigcap_{n\geq 0}\overline{\left\{\Phi(z,t)\;|\;t\geq n\right\}}.\]
	
	We will denote $\phi^-_z$ as the negative half-leaf and $\phi^+_z$ as the positive half-leaf from the point $z$, which means that if $\phi_z=\Phi(z,0)$ then $$\phi^-_z=\left\{\Phi(z,t);\;t< 0\right\},\;\mbox{and}\;\phi^+_z=\left\{\Phi(z,t);\;t> 0\right\}.$$
	
	If $\breve{S}$ is the universal covering space of $S$ and ${\breve\pi}:\breve{S}\to S$ a universal covering projection then $\F$ can be naturally lifted to a singular foliation $\breve{\F}$ of $\breve{S}$ such that $\dom(\breve{\F})={\breve\pi}^{-1}(\dom(\F))$. 
	
	We will denote $\tildedom(\F)$ as the universal covering space of $\dom(\F)$, $\tilde\pi:\tildedom(\F)\to\dom(\F)$ the universal covering projection and $\tildeF$ the non-singular oriented foliation of $\tildedom(\F)$ lifted from $\F|_{\dom(\F)}$.
	
	Let $\F$ be a singular oriented foliation on an oriented surface $S$. 
	
	\begin{definition}\label{def:transverse}
		We will say that a path $\gamma:J\to\dom(\F)$ is \textit{positively transverse} to $\F$ (in the whole text we will say just $\F$-transverse) if for every $t_0\in J$ there exist {a neighborhood $W$ of $\gamma(t_0)$ and an orientation preserving continuous map $h:W\to(0,1)^2$} that maps the restricted foliation $\F|_{W}$ onto the oriented foliation by vertical lines oriented downwards such that the first coordinate of the map $t\mapsto h(\gamma(t))$ is strictly increasing in a neighborhood of $t_0$.
	\end{definition}
	
	Note that an $\F$-transverse path $\gamma$ does not contain any singularity and each intersection of $\gamma$ with a leaf of $\F$ is topologically transverse and cross the leaf locally ``from right to left''. 
	
	{ We need to define an equivalence relation between $\F$-transverse paths. 
		
		\begin{definition}\label{def:equivalent}
			Let $\gamma:J\to\dom(\F)$ and $\gamma':J'\to\dom(\F)$ be two $\F$-transverse paths. We will say that $\gamma$ and $\gamma'$ are $\F$-\textit{equivalent} if they can be lifted to $\tildedom(\F)$ into paths $\tildegamma:J\to\tildedom(\F)$ and $\tildegamma':J'\to\tildedom(\F)$ that meet exactly the same leaves.
	\end{definition}}
	
	\begin{definition}\label{def:recurrentpath}
		An $\F$-transverse path $\gamma:\R\to S$ will be called \textit{$\F$-positively recurrent} if for every interval $J\subset \R$ and every $t\in\R$ there exists an interval $J_0\subset[t,+\infty)$ such that $\gamma|_{J_0}$ is equivalent to $\gamma|_J$. It will be called \textit{$\F$-negatively recurrent} if for every interval $J\subset \R$ and every $t\in\R$ there exists an interval $J_0\subset(-\infty,t]$ such that $\gamma|_{J_0}$ is equivalent to $\gamma|_J$. It is \textit{$\F$-bi-recurrent} if it is both $\F$-positively and $\F$-negatively recurrent. 
	\end{definition}
	
	\section{Forcing theory for surface homeomorphisms}\label{sec:forcing}
	
	Forcing theory for surface homeomorphisms is based on Brouwer theory and the famous Brouwer translation theorem. This theorem says that given  $\check f\in\homeo(\R^2)$ fixed point free, that is called \textit{Brouwer's homeomorphisms}, then for all $\check z\in\R^2$ there is a line $\check \phi$ such that $\check z\in\check \phi$ and \[\check f^{-1}(\check \phi)\subset R(\check \phi)\quad\mbox{and}\quad \check f(\check\phi)\subset L(\check \phi),\] this line is called \textit{Brouwer's line}.
	
	To relate forcing theory with Brouwer's theory we use a consequence of the Equivariant Foliation Theorem due to Le Calvez (see \cite{lecalvez05}). For do this we must introduce Maximal Isotopies.
	
	\subsection{Maximal Isotopy, Equivariant Foliation}
	
	Let $\I_f$ be the space of  identity isotopies of $f\in\homeo_0(f)$. That is, $\I_f$ is the set of continuous paths defined on $[0,1]$ joining the identity to $f$ in the space $\homeo(S)$, furnished with the $C^0$ topology. 
	
	If $I=(f_t)_{t\in[0,1]}\in\I_f$, we define the \textit{trajectory of the point $z\in S$ along $I$} as the path \[I(z):t\mapsto I(z)(t)=f_t(z).\]  In general, we can define by concatenation $I^{n}(z)=\prod_{0\leq m<n}I(f^m(z))$ for every integer $n\geq 1$. Furthermore, we can define 
	\[I^{\N}(z)=\prod_{m\geq 0}I(f^m(z)),\; I^{-\N}(z)=\prod_{m\leq 0}I(f^m(z)),\; I^{\Z}(x)=\prod_{m\in\Z}I(f^m(z)),\]
	as the future, past and whole trajectory of a point $z\in S$ along $I$, respectively.

	\begin{definition}
		Given $f\in\homeo_0(S)$ and $I\in\I_f$, we define the \textit{fixed point set of $I$} as the set $\fix(I)=\bigcap_{t\in[0,1]}\fix(f_t).$ If $z\in\fix(I)$ then $z$ is called \textit{fixed point for the isotopy $I$}.
	\end{definition}
	
	The complement set of $\fix(I)$ in $S$, $S\backslash\fix(I)$ will be called domain of $I$ and will be denoted by $\dom(I)$, that is itself a surface. 
	
	We can define a partial order on $\I_f$ as follows: given $I'$ and $I$ two elements of $\I_f$, we say that $I'\preceq I$ if 
	\begin{enumerate}
		\item $\fix(I')\subset \fix(I)$ and
		\item $I$ is homotopic to $I'$ relative to $\fix(I')$.
	\end{enumerate}
	
	Thus, we say that $I$ is a maximal isotopy of $f$, if it is maximal for the partial order above.

	An important result regarding existence of maximal isotopy is due B{\'e}guin, Crovisier and Le Roux in \cite{bclr16}:
	
	\begin{theorem}[Maximal Isotopy, see Corollary 1.3 in \cite{bclr16}]\label{teo:bclr}
		Let $f:S\to S$ be a homeomorphism isotopic to identity, then for every $I'\in\mathcal{I}_f$, there exists $I\in\mathcal{I}_f$ such that $I'\preceq I$ and $I$ is maximal for the partial order above.
	\end{theorem}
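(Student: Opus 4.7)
The plan is to apply Zorn's Lemma to the partial order $\preceq$ restricted to the poset $\mathcal{P}:=\{I\in\mathcal{I}_f : I' \preceq I\}$. Maximal elements of $\mathcal{P}$ are exactly the maximal isotopies refining $I'$, so it suffices to show that $\mathcal{P}$ is inductively ordered, i.e.\ every $\preceq$-chain in $\mathcal{P}$ admits an upper bound.

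The first main ingredient is an \textbf{extension lemma}: if $I\in\mathcal{I}_f$ is not maximal, one should exhibit a fixed point $z_0\in\fix(f)\setminus\fix(I)$ whose trajectory loop $I(z_0)$ is null-homotopic in $S$; conversely, given such $z_0$, one constructs $I''\succ I$ with $z_0\in\fix(I'')$. The construction is purely local: choose a small open disk $D\ni z_0$ disjoint from $\fix(I)$, lift the isotopy to a contractible neighborhood, and use the null-homotopy of $I(z_0)$ to perform an Alexander-type modification that retracts the trajectory of $z_0$ to the constant loop, without changing the time-$1$ map $f$ and relative to $\fix(I)\cup\{z_0\}$. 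This characterizes maximality: $I$ is maximal in $\mathcal{P}$ exactly when every fixed point of $f$ outside $\fix(I)$ has a trajectory loop that is non-contractible in $S$.

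The second, and more delicate, step is the chain condition. Given a $\preceq$-chain $(I_\alpha)_{\alpha\in A}$ in $\mathcal{P}$, set $F=\bigcup_\alpha\fix(I_\alpha)\subseteq\fix(f)$. The refinement condition $I_\alpha\preceq I_\beta$ forces the trajectories $I_\alpha(z)$ for $z\notin F$ to represent a coherent homotopy class rel endpoints in $S\setminus F$. One then constructs an isotopy $I^*\in\mathcal{I}_f$ realizing this common homotopy class and satisfying $\fix(I^*)\supseteq F$. The practical approach is to first reduce to a cofinal countable sub-chain, exploiting separability of the space of arcs with prescribed endpoints (modulo homotopy) in the open surface $S\setminus F_0$ for $F_0\subseteq F$ countable dense, and then to pass to the limit by a compactness argument in the $C^0$ topology on isotopies.

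The main obstacle will be precisely this chain step. The extension lemma is essentially a finite local construction and is relatively standard. The difficulty in the chain condition is that $F$ need not be closed, the index set $A$ may be uncountable, and one must produce a single isotopy simultaneously compatible with every $I_\alpha$; the cofinal countable reduction and the subsequent limit argument are the genuinely technical content, and constitute the bulk of the work carried out in \cite{bclr16}.
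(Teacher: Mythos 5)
This theorem is not proved in the paper at all: it is imported verbatim from \cite{bclr16}, so your sketch has to stand on its own, and as written it has a genuine error at its core, namely the extension criterion. Maximality is governed by contractibility of the trajectory loop $I(z_0)$ in $\dom(I)=S\setminus\fix(I)$, not in $S$ (this is exactly the characterization recalled in the paper right after Theorem \ref{teo:bclr}). The difference is not cosmetic: take for $f$ the time-one map of a flow on $\R^2$ whose orbits turn around the origin with a radius-dependent angle, let $I$ be the isotopy given by the flow, so that $\fix(I)=\{0\}$, and let $z_0$ be a fixed point of $f$ on a circle where the flow makes exactly one full turn. Then $I(z_0)$ is contractible in $\R^2$, but the homotopy class of the trajectory loop of $z_0$ in $S\setminus\fix(I)$ is an invariant of the relation $\preceq$, and it has winding number $1$ about the origin; hence no $I''\succeq I$ can fix $z_0$, contrary to what your criterion predicts. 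Moreover, even with the correct hypothesis, the extension step is not ``a finite local construction'': the loop $I(z_0)$ is contractible in $\dom(I)$ but in general is nowhere near contained in a small disk around $z_0$, so there is no Alexander-type surgery supported near $z_0$ that makes $z_0$ fixed for all times while preserving both the time-one map and the homotopy class relative to $\fix(I)$. This one-point extension statement is precisely the main theorem of \cite{bclr16}; its proof is global and constitutes the bulk of that paper. You have the division of labour inverted: what you call ``relatively standard'' is the heart of the matter.

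The chain condition is also unsupported as written. A chain in your poset can have uncountable cofinality (order type $\omega_1$, say), so ``reduce to a cofinal countable sub-chain by separability'' is not automatic; what one can actually prove is that a strictly increasing well-ordered family of closed sets $\fix(I_\alpha)$ in a second countable surface must be countable, and one then has to deal separately with chain elements sharing the same fixed point set. More seriously, ``pass to the limit by a compactness argument in the $C^0$ topology on isotopies'' fails: there is no equicontinuity, an increasing family of isotopies need not converge in any topology, and one must instead choose new representatives in the homotopy classes and show that an upper bound exists whose fixed point set contains $\bigcup_\alpha\fix(I_\alpha)$ --- the fact that this union need not be closed is itself one of the difficulties. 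Handling this is again substantive content of \cite{bclr16} (building on Jaulent's earlier work on maximal singular isotopies), not a consequence of abstract compactness. In short, the Zorn frame is the right one, but neither pillar of your plan --- the extension lemma (misstated) nor the chain condition (asserted) --- is actually established.
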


	An isotopy $I$ is maximal if and only if, for every $z\in\fix(f) \setminus \fix(I)$, the closed curve $I(z)$ is not contractible in $\dom(I)$. So, {if $\tilde I=(\tilde f_t)_{t\in[0,1]}$ is the identity isotopy that lifts $I|_{\dom(I)}$ to the universal covering space $\tildedom(I)$ of $\dom(I)$ then $\tilde f_1$ has no fixed point, which implies that each connected component of $\tilde S$ is homeomorphic to the plane}. And therefore, as a consequence of the Equivariant Foliation Theorem, due to Le Calvez (see \cite{lecalvez05}), we have:
	
	\begin{corollary}\label{cor:equivariant}
		Let $f:S\to S$ be a {homeomorphism} isotopic to identity of a surface $S$, $I=(f_t)_{t\in[0,1]}$ a maximal identity isotopy on $S$ {such $f_1:=f$} then there exists a singular oriented foliation $\F$ on $S$ such that $\sing(\F)=\fix(I)$ and for all point $z\in \dom(I)$ there is a positively $\F$-transverse path $\gamma$ that joins $z$ to $f(z)$ and is homotopic to $I(z)$ on $\dom(I)$, with the endpoints fixed. 
	\end{corollary}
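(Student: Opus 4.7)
The plan is to reduce the statement to Le Calvez's Equivariant Foliation Theorem by passing to the universal cover of $\dom(I)$. First, I restrict the isotopy $I$ to $\dom(I)$ and lift it to an identity isotopy $\tilde I=(\tilde f_t)_{t\in[0,1]}$ on $\tildedom(I)$. The paragraph preceding the statement already records (using that $I$ is maximal) that $\tilde f_1$ has no fixed point and that each connected component of $\tildedom(I)$ is homeomorphic to the plane; so on each such component $\tilde f_1$ is a Brouwer homeomorphism that commutes with the covering group.

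Second, I invoke Le Calvez's Equivariant Foliation Theorem~\cite{lecalvez05}: on each connected component of $\tildedom(I)$ there is a non-singular oriented foliation $\tildeF$, equivariant under the deck group of $\tildedom(I)\to\dom(I)$, each leaf of which is a Brouwer line for $\tilde f_1$. By equivariance, $\tildeF$ descends to a non-singular oriented foliation on $\dom(I)$. I then extend this foliation to $S$ by declaring the singular set to be $\fix(I)$; this produces a singular oriented foliation $\F$ on $S$ with $\sing(\F)=\fix(I)$, as required.

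Third, I have to verify the dynamical transversality. Fix $z\in\dom(I)$ and choose a lift $\tilde z\in\tildedom(I)$, so the lifted trajectory $\tilde I(\tilde z)$ joins $\tilde z$ to $\tilde f_1(\tilde z)$. Since $\tilde f_1$ has no fixed points and each leaf of $\tildeF$ is a Brouwer line, one can modify $\tilde I(\tilde z)$ in each flow-box of $\tildeF$ (using the local charts of Definition~\ref{def:transverse}) so that it crosses every leaf it meets from right to left, obtaining a positively $\tildeF$-transverse path $\tildegamma$ homotopic to $\tilde I(\tilde z)$ with fixed endpoints. Projecting by $\tilde\pi$ yields a positively $\F$-transverse path $\gamma$ from $z$ to $f(z)$ homotopic to $I(z)$ in $\dom(I)$.

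The main obstacle is step three: Le Calvez's theorem supplies the foliation, but producing a path that is actually transverse everywhere (not merely crossing each leaf) requires a careful local-to-global gluing, performed inside $\dom(I)$, that simultaneously respects the orientation of every leaf met by $\tilde I(\tilde z)$ and stays within $\tildedom(I)$. A compactness argument along $\tilde I(\tilde z)$, combined with the flow-box charts, is what allows the pointwise crossing information to be assembled into a single transverse path $\tildegamma$ in the correct homotopy class.
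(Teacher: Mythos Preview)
Your approach matches the paper's: the corollary is presented there as an immediate consequence of Le Calvez's Equivariant Foliation Theorem \cite{lecalvez05}, once the preceding paragraph has established that the lifted time-one map $\tilde f_1$ on $\tildedom(I)$ is fixed-point free and that each component of $\tildedom(I)$ is a plane. The transverse-path construction you isolate in step three is already part of Le Calvez's original result (the foliation in \cite{lecalvez05} is produced precisely so that each trajectory $\tilde I(\tilde z)$ is homotopic rel endpoints to a positively transverse path), so the paper simply cites \cite{lecalvez05} rather than reproving it; your flow-box gluing sketch is thus unnecessary, though the overall reduction is correct.
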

	
	The foliation $\F$ satisfying the above conclusion is said to be \textit{transverse to $I$}.

	\subsection{Transverse Trajectories and $\F$-transverse intersection}
	
	{ Note that the path $\gamma$, given by Corollary \ref{cor:equivariant}, is not uniquely defined. }
	We will write $I_{\F}(z)$ for the set of paths $\gamma:[a,b]\to\dom(I)$ defined on a compact interval $[a,b]$ such that $\gamma(a)=z,\, \gamma(b)=f(z)$, $\gamma$  is $\F$-transverse and $\gamma$ is homotopic to $I(z)$ in $\dom(I)$, relative to the endpoints. We will also abuse the notation  and use $I_{\F}(z)$ for any representative in this class and call it the \textit{transverse trajectory of $z$}.  As before, we can define by concatenation $I^{n}_{\F}(z)=\prod_{0\leq m<n}I_{\F}(f^m(z))$ for every integer $n\geq 1$. Similarly we define 
	$$I^{\N}_{\F}(z)=\prod_{m\geq 0}I_{\F}(f^m(z)),\; I^{-\N}_{\F}(z)=\prod_{m\leq 0}I_{\F}(f^m(z)),\; I^{\Z}_{\F}(x)=\prod_{m\in\Z}I_{\F}(f^m(z)),$$
	and the last one will be called the \textit{whole transverse trajectory of the point $z\in S$}.
	
	A main definition in forcing theory for surface homeomorphisms is about a kind of a topological intersection between transverse path, that we call \textit{$\F$-transverse intersection}. In order to express it, we need the concept of relative order for lines on $\R^2$:
	
\begin{definition}\label{def:acimaabaixo}
	Given three disjoint lines $\phi,\, \phi_1,\, \phi_2:\R\to\R^2$, we will say that $\phi_2$ \textit{is above} $\phi_1$ \textit{relative to} $\phi$ (and that $\phi_1$ \textit{is below} $\phi_2$\textit{ relative to} $\phi$) if none of the lines separates the two others and if, for every pair of disjoint paths $\gamma_1$ and $\gamma_2$ joining $z_1=\phi(t_1)$ to $z'_1\in\phi_1$ and $z_2=\phi(t_2)$ to $z'_2\in\phi_2$, respectively, such that the paths do not meet the lines but at their endpoints, we have that $t_1<t_2$. See the Figure \ref{fig:acimaabaixo}.
\end{definition}

\begin{figure}[!h]
	\centering
	\def\svgwidth{5cm}
	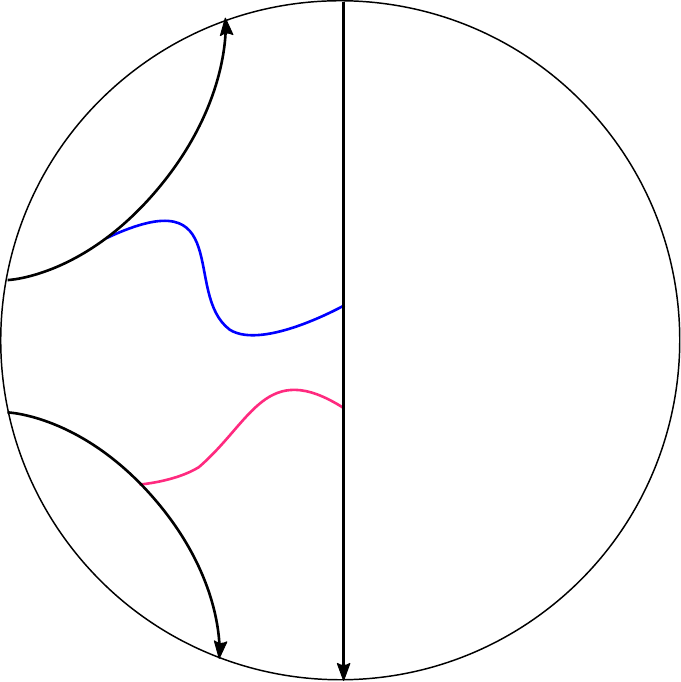
	\caption{$\phi_2$ is above $\phi_1$ relative to $\phi$.}
	\label{fig:acimaabaixo}
\end{figure}

\begin{definition}\label{def:Fintersection}
	Let $\gamma_1: J_1\to\dom(\F)$ and $\gamma_2: J_2\to\dom(\F)$ be two $\F$-transverse paths defined on intervals $J_1, J_2\subset\R$. Suppose that there exist $t_1\in J_1$ and $t_2\in J_2$ such that $\gamma_1(t_1)=\gamma_2(t_2)$. We will say that $\gamma_1$ and $\gamma_2$ have an {\textit{$\F$-transverse intersection}} at $\gamma_1(t_1)=\gamma_2(t_2)$ if there exist $a_1,b_1\in J_1$ and $a_2,b_2\in J_2$ satisfying $a_1<t_1<b_1$ and $a_2<t_2<b_2$ respectively, such that if $\tildegamma_1,\tildegamma_2$ are lifts of $\gamma_1,\gamma_2$ to $\tildedom(\F)$ respectively, verifying $\tildegamma_1(t_1)=\tildegamma_2(t_2)$, then
	\begin{enumerate}
		\item $\phi_{\tildegamma_1(a_1)}\subset L(\phi_{\tildegamma_2(a_2)}),\;\phi_{\tildegamma_2(a_2)}\subset L(\phi_{\tildegamma_1(a_1)})$; 
		\item $\phi_{\tildegamma_1(b_1)}\subset R(\phi_{\tildegamma_2(b_2)}),\;\phi_{\tildegamma_2(f_2)}\subset R(\phi_{\tildegamma_1(b_1)})$;
		\item $\phi_{\tildegamma_2(a_2)}$ is below $\phi_{\tildegamma_1(a_1)}$ relative to $\phi_{\tildegamma_1(t_1)}$ and $\phi_{\tildegamma_2(b_2)}$ is above $\phi_{\tildegamma_1(b_1)}$ relative to $\phi_{\tildegamma_1(t_1)}$.
	\end{enumerate}
\end{definition}
	
	\begin{figure}[!h]
		\centering
		\def\svgwidth{5cm}
		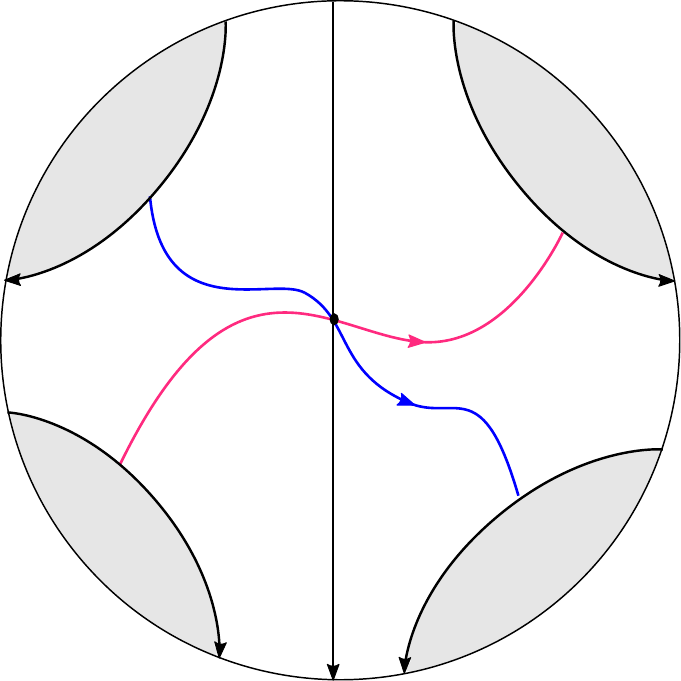
		\caption{$\gamma_1$ and $\gamma_2$ have an $\F$-transverse intersection.}
		\label{fig:intersecao}
	\end{figure}

	Definition \ref{def:Fintersection} means that any {path} $\tilde\alpha_1$ that joins the leaves $\phi_{\tilde\gamma_1(a_1)}$ and $\phi_{\tilde\gamma_1(b_1)}$ and any path $\tilde\alpha_2$ joining $\phi_{\tilde\gamma_2(a_2)}$ and $\phi_{\tilde\gamma_2(b_2)}$ must to intersect.
	
	When $\gamma_1=\gamma_2$ we will say that $\gamma_1$ has an \textit{$\F$-transverse self-intersection}. Namely, an $\F$-transverse path {$\gamma\in\dom(\F)$} has an $\F$-transverse self-intersection if for every lift $\tildegamma$ to the universal covering space $\tildedom(\F)$ of $\dom(\F)$, there exists a non trivial covering automorphism $T\in\deck(\dom(\F))$ such that $\tildegamma$ and $T(\tildegamma)$ have an $\tildeF$-transverse intersection.
	
	The statement below is a fundamental theorem of  \cite{ferradura}, which is a purely topological criterion for the existence of a topological horseshoe stated in terms of transverse trajectories.

	\begin{theorem}[Topological Horseshoe, Theorem N, \cite{ferradura}]\label{teo:horseshoe}
		Let $S$ be an oriented surface, $f$ a homeomorphism isotopic to identity on $S$, $I$ a maximal isotopy of $f$ and $\F$ a transverse foliation to $I$. If there exists a point $z\in\dom(I)$ and an integer $q\geq 1$ such that the transverse trajectory $I^q_{\F}(z)$ has an $\F$-transverse self-intersection at $I^q_{\F}(z)(s)=I^q_{\F}(z)(t)$, where $s<t$, then $f$ has a topological horseshoe.
	\end{theorem}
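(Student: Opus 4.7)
The plan is to move to the universal cover $\tildedom(\F)$ and use the $\tildeF$-transverse intersection supplied by the hypothesis to create symbolic dynamics modelled on the Bernoulli shift. First, I would fix a lift $\tilde z$ of $z$ and let $\tildegamma$ denote the lift of $I^q_{\F}(z)$ issued from $\tilde z$. By definition of $\F$-transverse self-intersection, there exists a non-trivial $T\in\deck(\dom(I))$ such that $\tildegamma$ and $T\tildegamma$ have an $\tildeF$-transverse intersection; extracting the parameters of Definition \ref{def:Fintersection} would single out four ``corner leaves'', namely $\phi_{\tildegamma(a_1)},\phi_{\tildegamma(b_1)},T\phi_{\tildegamma(a_2)},T\phi_{\tildegamma(b_2)}$, whose relative positions force any transverse path crossing one leaf-pair to intersect any transverse path crossing the other.

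The key step would then be to prove a concatenation/forcing lemma: whenever two $\F$-transverse paths have an $\F$-transverse intersection at a point, one can juxtapose them around that point to produce a new $\F$-transverse path inheriting their crossings, and this operation is stable under further iteration and under the deck action. Applied to the pair $\{\tildegamma,T\tildegamma\}$, this shows that for every finite word $w\in\{0,1\}^{n}$ one can exhibit a lifted $\tildeF$-transverse path whose successive crossings of the corner leaves follow the pattern prescribed by $w$. The realisation principle implicit in Corollary \ref{cor:equivariant} (every $\F$-transverse path homotopic to a concatenation $I^{k}(y)$ is realised by the orbit of some $y\in\dom(I)$) then yields a point $z_{w}\in\dom(I)$ whose $f^{q}$-orbit tracks the itinerary $w$. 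A Cantor-type compactness argument packages these points into an $f^{q}$-invariant compact set $\Delta$ together with a continuous surjection $\pi_{2}:\Delta\to\Sigma=\{0,1\}^{\Z}$ semi-conjugating $f^{q}|_{\Delta}$ to the shift, while $\pi_{1}$ is essentially the inclusion $\Delta\hookrightarrow S$ composed with the coding identification.

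The main obstacle, which is where most of the technical weight of the theorem lives, is twofold. First, item (3) of Definition \ref{def:horseshoe} requires every periodic word to be realised by an honest periodic point of $f^{q}$, and not merely by a forward-recurrent point; this calls for a closing/fixed-point argument inside the ``return disc'' bounded by the closed $\tildeF$-transverse loop associated with the period, typically invoking Brouwer translation theory or a Lefschetz index computation on the foliated sphere obtained by collapsing the complement. Second, producing a uniformly finite-to-one factor $\pi_{1}$ demands that the geometric structure of the four corner leaves be robust under the forcing/concatenation calculus, so that each orbit in $\Delta$ is the coding of only boundedly many itineraries. Establishing these two points---together with verifying that the concatenation lemma is compatible with the deck action by arbitrary powers of $T$---is precisely what occupies the bulk of the proof of Theorem N in \cite{ferradura}.
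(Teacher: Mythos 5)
This statement is not proved in the paper at all: it is quoted verbatim as Theorem~N of \cite{ferradura}, so there is no internal argument to compare yours against. Judged on its own terms, your outline does follow the general strategy of Le Calvez--Tal (lift to $\tildedom(\F)$, exploit the $\tildeF$-transverse intersection of $\tildegamma$ with a deck translate, build admissible paths realizing arbitrary itineraries, then package them into $\Delta$ with the two factor maps), but it contains a genuine gap at the step you treat as routine. You invoke a ``realisation principle implicit in Corollary~\ref{cor:equivariant}'': that corollary only goes in one direction --- it says the isotopy trajectory of every point is homotopic to \emph{some} positively transverse path --- and it says nothing about the converse, namely that a transverse path manufactured by juxtaposing pieces of $\tildegamma$ and $T\tildegamma$ around the intersection is again equivalent to a subpath of an actual transverse trajectory $I^{k}_{\F}(y)$ of some orbit. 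That converse is precisely the content of the forcing machinery of \cite{forcing} (the propositions on admissible paths of order $n$ and on transverse intersections forcing new admissible paths), and it is the analytic heart of the theorem; as phrased, your ``concatenation/forcing lemma'' is either circular (a path homotopic to $I^{k}(y)$ is by definition already realized by $y$) or is exactly the hard statement you would need to prove, not assume.

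Beyond that, the two points you defer --- realizing every periodic word by a true periodic point of $f^{q}$ (item (3) of Definition~\ref{def:horseshoe}) and producing a uniformly finite-to-one $\pi_{1}$ --- are correctly identified as substantial, but acknowledging them does not discharge them; in \cite{ferradura} they require constructing the coding on an auxiliary compact space $Y$ (not simply on $\Delta\subset S$ with $\pi_{1}$ an inclusion, as you suggest) and a fixed-point/index argument in an annular covering space to close the periodic orbits. So the proposal is a reasonable road map of the published proof, but as a proof it is incomplete: the central forcing proposition is misattributed to Corollary~\ref{cor:equivariant}, and the symbolic-dynamics packaging and periodic-point steps are stated rather than established.
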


	\subsection{Other results}
	In this subsection, {we present several results} that will be useful for the proofs of our results. They may be found in \cite{forcing} and \cite{ferradura}.

	An $\F$-transverse loop $\Gamma$ is called \textit{prime} if there is no $\F$-transverse loop $\Gamma'$ and an integer $n\geq 2$ such that $\Gamma$ is $\F$-equivalent to $\Gamma^{'n}$. Note that if $\Gamma$ is a simple loop then it is prime.
	
	\begin{proposition}[Proposition 1 in \cite{forcing}]\label{prop:proposition1}
		Let $\F$ be a  singular oriented foliation on a surface and $(\Gamma_i)_{1\leq i \leq m}$ a family of prime
		$\F$-transverse loops that are not pairwise equivalent. We suppose that the leaves met by the loops $\Gamma_i$ are never closed and that there exists an integer $N$ such that no loop $\Gamma_i$ meets a leaf more than $N$ times.
		Then, for every $i \in\{1,\cdots,m\}$, there exists an $\F$-transverse loop $\Gamma'_i$ equivalent to $\Gamma_i$ such that:
		\begin{enumerate}
			\item $\Gamma'_i$ and $\Gamma'_j$ do not intersect if $\Gamma_i$ and $\Gamma_j$ have no $\F$-transverse intersection;
			\item $\Gamma'_i$ is simple if $\Gamma_i$ has no $\F$-transverse self-intersection.
		\end{enumerate}
	\end{proposition}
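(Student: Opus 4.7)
The plan is to work in the universal cover $\tildedom(\F)$ of $\dom(\F)$, where the lifted foliation $\tildeF$ is non-singular, has no closed leaves (by hypothesis), and every leaf is a line. Each loop $\Gamma_i$ lifts to a proper $\tildeF$-transverse path $\tildegamma_i$, and its $\F$-equivalence class is determined, by Definition \ref{def:equivalent}, by the collection of $\tildeF$-leaves it meets, modulo the deck group $\deck(\dom(\F))$. Within that class I am free to modify $\tildegamma_i$ by any ambient isotopy that preserves the family of leaves crossed, and the whole strategy is to use this freedom to remove unnecessary topological intersections one at a time, until only intersections forced by the combinatorics of leaves (i.e.\ the $\F$-transverse ones) remain.

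The key technical step is a bigon-removal lemma. Assume two lifts $\tildegamma_i$ and $\tildegamma_j$ (or $T(\tildegamma_i)$ for a non-trivial $T\in\deck(\dom(\F))$ in the self-intersection case) meet topologically at a point $\tilde z$ but have no $\tildeF$-transverse intersection there. Using the order relation between disjoint lines relative to a third one (Definition \ref{def:acimaabaixo}) and the local normal form that makes $\tildeF$ look like vertical downward lines near $\tilde z$, the failure of $\tildeF$-transverse intersection forces the two paths to meet the same leaves in the same order along a neighborhood of $\tilde z$. A maximal such common block bounds a topological bigon whose two sides are subarcs of $\tildegamma_i$ and $\tildegamma_j$ meeting the same leaves, and this bigon can be collapsed by an isotopy through $\tildeF$-transverse paths, strictly decreasing the number of intersection points while preserving each path's $\F$-equivalence class.

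The iteration of this surgery terminates because the uniform bound $N$ on the number of times each $\Gamma_i$ meets a single leaf, together with the pairwise non-equivalence of the $\Gamma_i$, yields a finite combinatorial upper bound on the number of essential intersections between any two equivalence classes. To obtain item (1), I would enumerate the pairs $(i,j)$ and carry out each surgery inside a small enough neighborhood of the cancelled intersection so that modifications for one pair do not reintroduce intersections for another; starting from a pair $\Gamma_i,\Gamma_j$ with no $\F$-transverse intersection, the process ends with disjoint representatives. For item (2), I would apply the same procedure to pairs $(\tildegamma_i,T(\tildegamma_i))$ for $T\in\deck(\dom(\F))\setminus\{\mathrm{Id}\}$ meeting $\tildegamma_i$, performed equivariantly with respect to the cyclic subgroup generated by the natural deck translate associated to $\Gamma_i$, so that the modification descends to a loop in $\dom(\F)$; the absence of $\F$-transverse self-intersection then forces every such pair of translates to admit bigon removal, and the final representative $\Gamma'_i$ is simple.

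The main obstacle will be the bigon-removal lemma itself: one must show that the local picture at a topological, non-$\tildeF$-transverse intersection genuinely admits an isotopy through $\tildeF$-transverse paths, which requires a careful combinatorial enumeration of the possible local configurations (using the above/below order of Definition \ref{def:acimaabaixo}), and one must verify that the surgery can be performed equivariantly so it descends to $\dom(\F)$ and does not create new $\F$-transverse intersections with any other loop in the family. The role of the bound $N$ and of the no-closed-leaves hypothesis is to make the induction well-founded and to prevent pathological accumulation of bigons along a single leaf.
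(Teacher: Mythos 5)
You are proving a statement that this paper does not prove at all: Proposition \ref{prop:proposition1} is quoted verbatim from \cite{forcing} (Proposition 1 there), and the authors simply cite it, so the only meaningful comparison is with the argument in \cite{forcing}, which is a genuinely delicate construction and not a routine curve-surgery induction. Measured against that, your proposal is a plan rather than a proof: its entire mathematical content sits in the ``bigon-removal lemma'' and in the termination and equivariance claims, none of which is established, and you say yourself that the key lemma is ``the main obstacle''.

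Concretely, the gaps are these. First, $\F$-equivalence (Definition \ref{def:equivalent}) is defined by lifts meeting \emph{exactly the same leaves}, so your moves must preserve that set on the nose; you assert that a non-$\F$-transverse topological intersection of $\tildegamma_i$ with $\tildegamma_j$ (or with $T(\tildegamma_i)$) is flanked by a \emph{finite} maximal block along which the two lifts meet the same leaves in the same order and that this block bounds a bigon that can be collapsed through $\tildeF$-transverse paths, but none of this is proved: the common block could a priori be infinite or degenerate (the arcs may touch without crossing or share a subarc), the collapse must keep the path transverse to $\tildeF$, and it must not create new intersections with the other loops' deck translates, including translates running through the ``small neighborhood'' in which you propose to localize the surgery. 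Second, the self-intersection case requires treating infinitely many pairs $(\tildegamma_i,T(\tildegamma_i))$ equivariantly with respect to the whole deck group, not just the cyclic subgroup associated with $\Gamma_i$, and simultaneously for all loops of the family; no decreasing complexity is exhibited that makes this process terminate --- the bound $N$ controls meetings with a single leaf, which does not by itself bound the number of intersection points you need to remove. Third, the hypotheses that the leaves met are never closed and are met at most $N$ times, together with primeness and pairwise non-equivalence, are precisely what must rule out the pathological configurations above; in your outline they are invoked only in a closing remark, which indicates that the point where your induction actually needs them has not been located. As it stands, the proposition remains unestablished by your argument.
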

	
	\begin{remark}\label{prop:1}
		If $\F$ is a singular oriented foliation on an oriented surface $S$ that has genus $0$ then, by the Jordan Curve Theorem, an $\F$-transverse simple loop meats a leaf of $\F$ just one time and, consequently, the leaves met by it are never closed. 
	\end{remark} 
	
	The following result is Proposition $2$ in \cite{forcing} and it is an adapted version of the Poincar{\'e}-Bendixson Theorem.

	\begin{proposition}\label{prop:proposition2}
		Let $\F$ be a singular oriented foliation on $\mathbb{S}^2$ and $\gamma:\R\to\mathbb{S}^2$ an $\F$-bi-recurrent transverse path. The following properties are equivalent:
		\begin{enumerate}
			\item $\gamma$ has no $\F$-transverse self-intersection;
			\item There exists an $\F$-transverse simple loop $\Gamma_0$ such that $\gamma$ is equivalent to the natural lift $\gamma_0$ of $\Gamma_0$, and
			\item The set $U=\bigcup_{t\in\R}\phi_{\gamma(t)}$ is an open annulus.
		\end{enumerate}
	\end{proposition}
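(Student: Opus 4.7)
The plan is to prove the cyclic chain of implications $(2)\Rightarrow(1)$, $(2)\Rightarrow(3)$, $(3)\Rightarrow(2)$, and $(1)\Rightarrow(2)$. Two of these are easy, one is moderate, and $(1)\Rightarrow(2)$ is the Poincar\'e--Bendixson core of the statement. I sketch each in turn.

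For $(2)\Rightarrow(1)$: since $\mathbb{S}^2$ has genus $0$, Remark~\ref{prop:1} tells us that the simple loop $\Gamma_0$ meets each leaf it crosses at most once and all such leaves are non-closed. A self-intersection of $\gamma$ pulls back, via $\F$-equivalence, to a self-intersection of $\gamma_0$ in the universal cover $\tildedom(\F)$, because equivalence preserves the sequence of leaves met. This contradicts the simplicity of $\Gamma_0$ (equivalently, apply Proposition~\ref{prop:proposition1} to the single loop $\Gamma_0$). For $(2)\Rightarrow(3)$, we use that $U=\bigcup_{t}\phi_{\gamma(t)}$ equals $\bigcup_{s}\phi_{\gamma_0(s)}$ because equivalent paths meet the same leaves. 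Embedding $\F$ in a topological flow $\Phi$ via Whitney's theorem, $\Gamma_0$ is a global transverse section to $\Phi|_U$ meeting each leaf of $U$ exactly once. The map sending a leaf to its unique crossing point with $\Gamma_0$ and parametrizing each leaf by flow-time yields a homeomorphism $U\cong \Gamma_0\times\R\cong\mathbb{S}^1\times\R$, an open annulus.

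For $(3)\Rightarrow(2)$, suppose $U$ is an open annulus, so $\F|_U$ is a non-singular oriented foliation of $\mathbb{S}^1\times\R$. Bi-recurrence of $\gamma$ excludes the existence of closed leaves in $U$ (a closed leaf would trap the transverse path on one side by Jordan separation, contradicting recurrence in the other direction). Hence each leaf in $U$ is proper and joins the two ends of the annulus; one builds a simple loop $\Gamma_0\subset U$ transverse to $\F$ by taking a short arc in $\R^2$ joining a lift $\tilde z$ to its translate $T(\tilde z)$ under a generator $T$ of $\deck(U)\cong\Z$, and projecting. Then both $\gamma$ and $\gamma_0$ lift to paths in the cover of $U$ that cross each fundamental vertical strip in the same order determined by the oriented foliation, so they are $\F$-equivalent.

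The main step is $(1)\Rightarrow(2)$. Fix $t_0\in\R$, set $z_0=\gamma(t_0)$, and choose a flow-box neighborhood $W$ of $z_0$ with a small transverse arc $\sigma\subset W$ through $z_0$. By $\F$-positive recurrence applied to the germ of $\gamma$ at $t_0$, there exists arbitrarily large $t_1>t_0$ such that $\gamma|_{[t_1,t_1+\varepsilon]}$ is $\F$-equivalent to $\gamma|_{[t_0,t_0+\varepsilon]}$; in particular $\phi_{\gamma(t_1)}=\phi_{\gamma(t_0)}$, so we may assume (moving along $\sigma$ inside the flow box and reparametrizing) that $\gamma(t_1)=z_0$. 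Concatenating $\gamma|_{[t_0,t_1]}$ with the degenerate return through $\sigma$ produces a closed $\F$-transverse path $\Gamma_0$. To show $\Gamma_0$ can be taken simple, lift to $\tildedom(\F)$: if $\Gamma_0$ had an $\F$-transverse self-intersection, so would $\gamma$ by concatenating the appropriate iterates, contradicting~(1); from the absence of such intersections and Proposition~\ref{prop:proposition1} one produces a simple $\F$-equivalent loop, which we still call $\Gamma_0$. The remaining task, and the main obstacle, is to prove $\gamma$ is $\F$-equivalent to the natural lift $\gamma_0$ of $\Gamma_0$. For this one lifts $\gamma$ to $\tildegamma:\R\to\tildedom(\F)$, lifts $\Gamma_0$ to a path starting at $\tildegamma(t_0)$ and ending at $T\tildegamma(t_0)$ for the deck transformation $T$ determined by the loop, and one uses bi-recurrence together with the no-self-intersection hypothesis to show inductively that $\tildegamma|_{[t_0+n(t_1-t_0),\,t_0+(n+1)(t_1-t_0)]}$ meets exactly the leaves $T^n$-translates of those met by $\tildegamma|_{[t_0,t_1]}$. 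The subtle point, and the place where condition~(1) is decisive, is ruling out that $\tildegamma$ "escapes sideways" and crosses leaves outside the $T$-invariant strip swept by $\gamma_0$: any such escape would, by bi-recurrence, eventually force a transverse self-intersection.
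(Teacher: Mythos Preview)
The paper does not actually prove Proposition~\ref{prop:proposition2}; it is quoted verbatim as Proposition~2 from \cite{forcing} (Le Calvez--Tal), introduced with the sentence ``The following result is Proposition~2 in \cite{forcing} and it is an adapted version of the Poincar\'e--Bendixson Theorem.'' So there is no in-paper proof to compare against, and your task here was really to reconstruct the argument from the cited source.

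That said, your sketch is broadly in the spirit of the original argument and identifies the correct logical structure: the implications $(2)\Rightarrow(1)$ and $(2)\Rightarrow(3)$ are soft, while $(1)\Rightarrow(2)$ is the substantive Poincar\'e--Bendixson step. A few places are too loose to stand as a proof. In $(3)\Rightarrow(2)$ you assert that leaves in $U$ are proper lines joining the two ends of the annulus; this requires an argument (essentially Poincar\'e--Bendixson on the annulus plus the absence of closed leaves, which you do note), and your construction of $\Gamma_0$ and the equivalence $\gamma\sim\gamma_0$ is only gestured at. More seriously, in $(1)\Rightarrow(2)$ the final paragraph --- the inductive claim that $\tildegamma$ stays in the $T$-invariant strip and that any ``sideways escape'' forces a transverse self-intersection --- is exactly where the work lies in \cite{forcing}, and you have not supplied the mechanism. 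The actual argument there analyzes carefully how a transverse path can re-enter a leaf it has already crossed, using the order structure on the space of leaves in $\tildedom(\F)$; bi-recurrence alone does not obviously produce the contradiction without that analysis. Your outline is a reasonable roadmap, but the last step needs the concrete leaf-space argument rather than an appeal to intuition.
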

	As a scholium of the proof of this proposition we have that an $\F$-transverse loop $\Gamma$ with no $\F$-transverse self-intersection is equivalent to a multiple of an $\F$-transverse simple loop $\Gamma_0$.

	\begin{definition}[Section 4.1 in \cite{ferradura}]\label{def:draws}
		Let $\F$ be a singular oriented foliation on $\mathbb{S}^2$.	Let $\gamma: J\to\dom(\F)$ be an $\F$-transverse path and $\Gamma_0:\T^1\to\dom(\F)$ be an $\F$-transverse loop. We will say that $\gamma$ \textit{draws} $\Gamma_0$ if there exist $a<b$ in $J$ and $t\in\R$ such that $\gamma|_{[a,b]}$ is $\F$-equivalent to $\gamma_0|_{[t,t+1]}$, where $\gamma_0$ is a natural lift of $\Gamma_0$.
	\end{definition}
	
	Let $\gamma:J\to\dom(\F)$ be an $\F$-transverse path that draws a simple loop $\Gamma_0$ and let $U_{0}$ be the open annulus of leaves that are crossed by $\Gamma_0$. We have the set \[J^{\gamma}_{\Gamma_0}=\{t\in J\,|\,\gamma(t)\in U_{{0}}\}.\] 
	
	A connected component {$J_0\subset J^{\gamma}_{\Gamma_0}$} is a \textit{drawing component of $\gamma$} if $\gamma|_{J_0}$ draws $\Gamma_0$. A \textit{crossing component of $\gamma$} is a connected component {$J_1\subset J^{\gamma}_{\Gamma_0}$} such that both ends $a$ and $b$ of $J_1$ are in $J$ and $\gamma(a)$ and $\gamma(b)$ are in different components of $\mathbb{S}^2\backslash {\Gamma_0}$. In this case, we will say that $\gamma|_{J_1}$ \textit{crosses} $\Gamma_0$.

	The next result is Proposition $24$ in section $4.2$ of \cite{ferradura}. 
	
	\begin{proposition}\label{prop:20}
		Let $\F$ be a singular oriented foliation on sphere $\mathbb{S}^2$ and suppose that $\gamma:J\to\dom(\F)$ is an $\F$-transverse path with no $\F$-transverse self-intersection, then
		\begin{enumerate}
			\item If $\gamma$ draws an $\F$-transverse simple loop $\Gamma_0$, there exists a unique drawing component of  $J^{\gamma}_{\Gamma_0}$;
			\item If $\gamma$ draws and crosses an $\F$-transverse simple loop $\Gamma_0$, there exists a unique crossing component of $J^{\gamma}_{\Gamma_0}$ and it coincides with the drawing component;
			\item if $\gamma$ draws an $\F$-transverse simple loop $\Gamma_0$ and does not cross it, the drawing component of $J^{\gamma}_{\Gamma_0}$ contains a neighborhood of at least one end of $J$.
		\end{enumerate}
	\end{proposition}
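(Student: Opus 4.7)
The plan is to lift everything to the universal cover $\tildedom(\F)$ and exploit the fact that $\gamma$ has no $\F$-transverse self-intersection precisely when, for every lift $\tildegamma$, no nontrivial deck translate $T(\tildegamma)$ meets $\tildegamma$ in an $\tildeF$-transverse intersection. Since $\Gamma_0$ is a simple $\F$-transverse loop, Proposition \ref{prop:proposition2} (applied to its natural lift) tells us that $U_0$ is an open annulus, so it lifts to a disjoint union of simply connected strips. Fix one such strip $\tilde U$, the corresponding lift $\tilde\Gamma_0\subset\tilde U$ of $\Gamma_0$, and the generator $T_0$ of the stabilizer of $\tilde U$, so that a natural lift $\tildegamma_0$ of $\Gamma_0$ satisfies $\tildegamma_0(t+1)=T_0(\tildegamma_0(t))$. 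In this vocabulary, ``$\gamma|_{[a,b]}$ draws $\Gamma_0$'' becomes: some lift of $\gamma|_{[a,b]}$ meets exactly the same leaves of $\tildeF$ as $\tildegamma_0$ over one period, i.e.\ spans a full $T_0$-fundamental domain of leaves in $\tilde U$.

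For (1), suppose toward a contradiction that two distinct drawing components $J_0=(a_0,b_0)$ and $J_0'=(a_0',b_0')$ exist, with $b_0\leq a_0'$. After possibly replacing $\Gamma_0$ by an $\F$-equivalent simple loop via Proposition \ref{prop:proposition1} to simplify the intersection geometry, lift $\gamma$ to a single $\tildegamma$. Then $\tildegamma(J_0)$ sits in a strip $\tilde U$ and $\tildegamma(J_0')$ in some strip $T(\tilde U)$, both drawings spanning full fundamental domains of leaves. A case analysis on whether $\tilde U=T(\tilde U)$ or not, and on which of the two boundary sides of each strip $\tildegamma$ uses to exit at $b_0$ and re-enter at $a_0'$, shows that some pair of sub-arcs of $\tildegamma$ and a deck translate $T'(\tildegamma)$ realize the configuration of four leaves described in Definition \ref{def:Fintersection}, producing an $\F$-transverse self-intersection of $\gamma$ and the desired contradiction. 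Part (2) then follows quickly: a crossing component $J_1\subset J^{\gamma}_{\Gamma_0}$ has endpoints in different components of $\S^2\setminus\Gamma_0$, and since $\gamma|_{J_1}$ stays in the annulus $U_0$ while crossing $\Gamma_0$ once, the positive transversality of $\gamma$ with respect to $\F$ forces the leaves visited to span a full fundamental domain of $T_0$, so $\gamma|_{J_1}$ draws $\Gamma_0$; uniqueness from (1) then yields both the uniqueness of the crossing component and its coincidence with the drawing component.

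For (3), let $J_0=(a_0,b_0)$ be the unique drawing component supplied by (1). If both $a_0$ and $b_0$ were interior points of $J$, then $\gamma$ would exit $U_0$ at $\gamma(b_0)$ and have entered $U_0$ at $\gamma(a_0)$ from specific sides of the annulus. The non-crossing hypothesis forces $\gamma(a_0)$ and $\gamma(b_0)$ to lie in the same connected component of $\S^2\setminus\Gamma_0$. However, between $a_0$ and $b_0$ the chosen lift $\tildegamma$ has advanced by the deck transformation $T_0$ along $\tilde\Gamma_0$, so small outward extensions of $\tildegamma$ past $\tildegamma(a_0)$ and $\tildegamma(b_0)$ give leaves of $\tildeF$ whose relative positions (via the same bookkeeping as in (1)) fulfill the hypotheses of Definition \ref{def:Fintersection} against a suitable translate $T_0(\tildegamma)$; this yields an $\F$-transverse self-intersection and a contradiction. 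Hence at least one of $a_0,b_0$ must be an endpoint of $J$, which is the desired conclusion. The principal obstacle throughout is the careful bookkeeping of how the four leaves $\phi_{\tildegamma(\cdot)}$ and their deck translates sit relative to one another, as required by Definition \ref{def:Fintersection}; applying Proposition \ref{prop:proposition1} to replace $\Gamma_0$ by a well-placed equivalent simple representative before unfolding to $\tildedom(\F)$ is what makes the configuration tractable.
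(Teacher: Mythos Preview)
The paper does not give a proof of this proposition: it is quoted as Proposition~24 of \cite{ferradura} and used as a black box, so there is no argument in the present paper to compare yours against.

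Regarding your proposal itself, the overall architecture --- lift to $\tildedom(\F)$, identify a strip $\tilde U$ over $U_0$ with cyclic stabilizer $\langle T_0\rangle$, and derive contradictions by exhibiting an $\tildeF$-transverse intersection between $\tildegamma$ and some deck translate --- is the right one, and is essentially the strategy used in \cite{ferradura}. But what you have written is an outline rather than a proof: in (1) and (3) you assert that ``a case analysis shows'' or that ``relative positions fulfill the hypotheses of Definition~\ref{def:Fintersection}'' without carrying out any of these verifications, and those verifications are precisely where all the content lies.

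More seriously, your argument for (2) contains an actual error. You claim that for any crossing component $J_1$, positive transversality alone forces $\gamma|_{J_1}$ to visit a full $T_0$-fundamental domain of leaves and hence to draw $\Gamma_0$. This is false. In a strip model $\tilde U\simeq\R\times(0,1)$ with leaves $\{x\}\times(0,1)$ and $T_0(x,y)=(x+1,y)$, an $\tildeF$-transverse arc can pass from one horizontal boundary to the other while its first coordinate increases by an amount strictly less than $1$; such an arc crosses but does not draw. The conclusion of (2) genuinely requires the no-self-intersection hypothesis on $\gamma$ (to rule out a crossing component disjoint from the drawing component), and your shortcut never invokes that hypothesis for this step.
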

	
	
	{	\begin{corollary}\label{cor:prop20}
			Let $\F$ be a singular oriented foliation on sphere $\mathbb{S}^2$. If $\gamma:J=(a,b)\to\dom(\F)$ is an $\F$-transverse path with no $\F$-transverse self-intersection that draws an $\F$-transverse simple loop $\Gamma_0$ then there is no $t_1,t_2\in J$ that are separated by the drawing component of $\gamma$ and such that $\gamma(t_1),\gamma(t_2)$ belongs to the same connected component of $\S^2\setminus U_0$, where $U_0$ is the open annulus of the leaves crossed by $\Gamma_0$.	
		\end{corollary}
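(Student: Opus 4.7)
The plan is to argue by contradiction, using the structural information about $J^{\gamma}_{\Gamma_0}$ provided by Proposition \ref{prop:20}. Suppose there exist $t_1, t_2 \in J$ separated by the drawing component $J_0$ (say $t_1 < t_0 < t_2$ with $t_0 \in J_0$) and such that $\gamma(t_1)$ and $\gamma(t_2)$ lie in the same connected component $K$ of $\S^2 \setminus U_0$. Note that since $U_0$ is an open annulus in $\S^2$, its complement has exactly two components $K_1, K_2$, which are closed disks, and the simple loop $\Gamma_0 \subset U_0$ separates $\S^2$ into two open disks $D_1, D_2$ arranged so that $K_i \subset D_i$.

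First I would dispose of the case where $\gamma$ does not cross $\Gamma_0$: by item (3) of Proposition \ref{prop:20}, the drawing component $J_0$ must contain a neighborhood of an end of $J$, so $J \setminus J_0$ is connected and cannot separate any pair of points. So we may assume $\gamma$ draws and crosses $\Gamma_0$, and then item (2) of Proposition \ref{prop:20} tells us that $J_0$ is the \emph{unique} crossing component of $J^{\gamma}_{\Gamma_0}$. Every other connected component $J_1$ of $J^{\gamma}_{\Gamma_0}$ whose endpoints lie in $J$ must therefore be non-crossing, meaning $\gamma$ enters and exits $U_0$ through the same component of $\S^2\setminus \Gamma_0$.

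Next I would track the side of $\Gamma_0$ occupied by $\gamma$ as one moves from $t_1$ to $t_2$. Since $\gamma$ can only switch between $D_1$ and $D_2$ by crossing $\Gamma_0$, and any such crossing is contained in some crossing component of $J^{\gamma}_{\Gamma_0}$, the value $\gamma(t)$ stays in the same $D_i$ except when passing through the unique crossing component $J_0$. Assuming without loss of generality $\gamma(t_1) \in K_1 \subset D_1$, this forces $\gamma(s) \in \overline{D_1}$ for every $s < J_0$, so the left endpoint of $J_0$ lies on $\partial K_1$. Crossing through $J_0$ brings $\gamma$ to $\overline{D_2}$, and the same argument shows $\gamma(s) \in \overline{D_2}$ for every $s > J_0$; in particular $\gamma(t_2) \in D_2$. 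Since $\gamma(t_2) \in \S^2 \setminus U_0$, it must lie in $K_2$, contradicting $\gamma(t_1), \gamma(t_2) \in K = K_1$.

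The only delicate step is the claim that $\gamma$ does not change sides of $\Gamma_0$ at any non-crossing component of $J^{\gamma}_{\Gamma_0}$ between $t_1$ and $t_0$ (or between $t_0$ and $t_2$); this is built into the definition of a non-crossing component, but one needs to check that the ``side'' information for $\gamma$ outside $U_0$ is determined consistently by following $\gamma$ on $\partial U_0 = \partial K_1 \sqcup \partial K_2$ at the endpoints of these components. Once that is verified, the argument reduces to a direct book-keeping of sides of $\Gamma_0$.
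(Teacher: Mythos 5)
Your argument is correct and is essentially the paper's own proof: both reduce to the uniqueness of the drawing/crossing component from Proposition \ref{prop:20} (after using item (3) to force the crossing case) and derive the contradiction by noting that a switch of sides of $\Gamma_0$ outside the drawing component would require a second crossing component. The only difference is presentational: you track sides of $\Gamma_0$ across non-crossing components, while the paper phrases the same book-keeping as locating an extra component $(e,f)$ of $J^{\gamma}_{\Gamma_0}$ whose endpoints land in different components of $\S^2\setminus\Gamma_0$.
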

		\begin{proof}
			Let $J_0=(c,d)\subset J$ be the unique drawing component of $\gamma$ given by item (1) of Proposition \ref{prop:20}. {If $J_0=J$ then the conclusion is obvious.} Otherwise, suppose, for a contradiction, that there exist $t_1,t_2\in J,\, t_1<c<d<t_2,$ such that $\gamma(t_1)$ and $\gamma(t_2)$ are in the same connected component $X$ of $\S^2\setminus U_0$. Since {$a<c<d<b$}, by item (3) of Proposition \ref{prop:20} we have that $\gamma$ crosses $\Gamma_0$, and then, by item (2) of the same proposition, we get that $J_0$ is also the unique crossing component. But $\gamma(c)$ and $\gamma(d)$ are in different connected components of $\mathbb{S}^2\backslash {\Gamma_0}$, while $\gamma(t_1)$ and $\gamma(t_2)$ are in the same connected component $X$ of $\mathbb{S}^2\backslash {\Gamma_0}$. If $\gamma(d)$ is not in $X$, there exists some connected component  $(e,f)\subset J^{\gamma}_{\Gamma_0}$, with {$d\le e<f\le t_2$} such that $\gamma(e)\not\in X$ and $\gamma(f)\in X$, a contradiction because this would imply the existence of a second crossing component. If, instead, $\gamma(c)$ is not in $X$, then one gets in the same way that there exists another crossing component $(e',f')$, with $t_1\le e'<f'\le c$, again a contradiction.
	\end{proof}}	

	For the next results let  $f\in\homeo_0(S)$, $I$ be a maximal isotopy for $f$ and $\F$ be a transverse foliation to $I$ given by Corollary \ref{cor:equivariant}.

	\begin{lemma}\label{lemma:17}
		For every $z\in\dom(I)$, and every integer $n\geq 1$:
		\begin{enumerate}
			\item There exists a neighborhood $W$ of $z$ such that, for every $z',z''\in W$, the path $I^n_{\F}(z')$ is $\F$-equivalent to a subpath of $I^{n+2}_{\F}(f^{-1}(z''))$. 
			\item For every {$z'\in\omega_f(z)\cap\dom(I)$ (respectively $z'\in\alpha_f(z)\cap\dom(I)$)} and every $m\geq 0$, the path $I^{n}_{\F}(z')$ is $\F$-equivalent to a subpath of $I^{\N}_{\F}(f^m(z))$ (respectively $I^{-\N}_{\F}(f^{-m}(z))$).
		\end{enumerate}
	\end{lemma}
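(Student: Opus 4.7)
The plan is to prove Part (1) by a continuity argument based on flow-box trivializations of $\F$, and then to deduce Part (2) from Part (1) by a limit procedure exploiting recurrence.

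For Part (1), I would first use the continuity of $f$ together with the local trivializability of $\F$ at non-singular points to pick a neighborhood $W$ of $z$ such that, for each $k\in\{-1,0,1,\ldots,n+1\}$, the image $f^k(W)$ is contained in a flow-box neighborhood $V_k\subset\dom(I)$ of $f^k(z)$, in which $\F$ is homeomorphic to the foliation by vertical lines oriented downward (possible since $z,f(z),\ldots,f^n(z)\in\dom(I)$). Given $z',z''\in W$, I would take a representative of $I^{n+2}_{\F}(f^{-1}(z''))$ obtained as the concatenation of representatives of $I_{\F}(f^{k}(z''))$ for $k=-1,0,\ldots,n$. Inside a flow box, an $\F$-transverse path crosses leaves monotonically from right to left, so after shrinking $W$ if necessary one sees that, in $V_0$, this concatenation must cross the leaf through $z'$ at some parameter $s$ (along $I_{\F}(f^{-1}(z''))$ or along the initial portion of $I_{\F}(z'')$, depending on whether $z'$'s leaf lies to the left or right of $z''$'s), and symmetrically in $V_n$ it crosses the leaf through $f^n(z')$ at a parameter $t>s$. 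The subpath from $s$ to $t$ is then the candidate for $\F$-equivalence with $I^n_{\F}(z')$: its endpoints lie on the correct leaves, and for each intermediate $k\in\{1,\ldots,n-1\}$ the segments $I_{\F}(f^k(z''))$ and $I_{\F}(f^k(z'))$ are homotopic in $\dom(I)$ up to small adjustments inside the flow boxes $V_k,V_{k+1}$, because the isotopy paths $I(f^k(z''))$ and $I(f^k(z'))$ are uniformly close. By Corollary \ref{cor:equivariant}, transverse paths in the same homotopy class rel endpoints (up to reparametrization of endpoints on the same pair of leaves) fall into the same $\F$-equivalence class, which identifies the leaf-crossing pattern of the subpath with that of $I^n_{\F}(z')$.

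For Part (2), I would apply Part (1) to the point $z'$ in place of $z$, producing a neighborhood $W'$ of $z'$. Since $z'\in\omega_f(z)$, there exists $k\geq m+1$ with $f^k(z)\in W'$; taking the pair $(z',f^k(z))$ in the role of $(z',z'')$ in Part (1), one gets that $I^n_{\F}(z')$ is $\F$-equivalent to a subpath of $I^{n+2}_{\F}(f^{k-1}(z))$, which is itself a subpath of $I^{\N}_{\F}(f^m(z))$ since $k-1\geq m$. The $\alpha$-limit version is obtained by applying the same reasoning to $f^{-1}$ (whose maximal isotopy is the reverse of $I$ and is transverse to $\F$ with reversed orientation), or equivalently by running the Part (1) argument in reverse time.

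The hard part will be the rigorous verification of $\F$-equivalence in Part (1). The geometric picture inside the flow boxes $V_0$ and $V_n$ is transparent, but matching leaf-crossing patterns in the universal cover $\tildedom(\F)$ requires fixing compatible lifts at reference points, using flow-box monotonicity to control which leaves of $\tildeF$ are crossed near the endpoints, and applying Corollary \ref{cor:equivariant} inductively to identify the homotopy class of each intermediate transverse segment with its $\F$-equivalence class. I expect that once this flow-box continuity step is in place, the deduction of Part (2) from Part (1) is essentially routine.
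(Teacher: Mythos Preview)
Your proposal is correct and matches the paper's treatment. The paper does not prove Part~(1) at all: it simply cites it as Lemma~17 of \cite{forcing}, whereas you supply a flow-box continuity sketch, which is indeed the standard way this is established in the cited reference. For Part~(2), your argument---apply Part~(1) at the point $z'$ to obtain a neighborhood $W'$, then use $z'\in\omega_f(z)$ to find an iterate $f^k(z)\in W'$ with $k\geq m+1$---is exactly the content of the paper's Remark following the lemma.
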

	
	{\begin{remark}
			The first item of Lemma \ref{lemma:17} is part of Lemma 17 in \cite{forcing}. And the second item follows immediately from the first one. Just observe that for every $m\geq 0$ there is some integer $m_k>m$ such that $f^{m_k}(z)\in W$, where $W$ is the neighborhood given by item $(1)$.
	\end{remark}}
	
	It follows from the Lemma \ref{lemma:17} and from Proposition \ref{prop:20} the consequence below:
	
	\begin{corollary}[Proposition $27$ in \cite{ferradura}]\label{cor:prop23}
		Let $f$ be an orientation preserving homeomorphism of $\mathbb{S}^2$ with no topological horseshoe. Let $I$ be a maximal isotopy of $f$ and $\F$ be a  transverse foliation to $I$. If $z\in\dom(I)$ is a non-wandering point of $f$, then:
		\begin{enumerate}
			\item Either $I^{\Z}_{\F}(z)$ never meets a leaf twice;
			\item Or $I^{\Z}_{\F}(z)$ is equivalent to an $\F$-transverse simple loop $\Gamma_0$.
		\end{enumerate}
	\end{corollary}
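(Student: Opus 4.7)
The plan is to apply the Poincaré--Bendixson dichotomy of Proposition \ref{prop:proposition2} (and its scholium) to the full transverse trajectory $\gamma := I^{\Z}_{\F}(z)$, with the no-horseshoe hypothesis supplying absence of $\F$-transverse self-intersections and the non-wandering hypothesis supplying enough recurrence to promote a local loop into a global one. First, Theorem \ref{teo:horseshoe} applied to every finite portion $I^q_{\F}(f^k(z))$ of $\gamma$ shows that $\gamma$ has no $\F$-transverse self-intersection. If $\gamma$ meets every leaf of $\F$ at most once, case (1) of the statement holds and the proof is finished.

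Assume henceforth that there exist $s_1<s_2$ with $\gamma(s_1),\gamma(s_2)$ on the same leaf $\phi$. The subpath $\gamma|_{[s_1,s_2]}$ gives rise to an $\F$-transverse loop $\Gamma$, which inherits from $\gamma$ the absence of $\F$-transverse self-intersections. Its natural lift is $\F$-bi-recurrent by periodicity, so by the scholium of Proposition \ref{prop:proposition2} there exist an $\F$-transverse simple loop $\Gamma_0$ and an integer $k\geq 1$ such that $\Gamma$ is $\F$-equivalent to $\Gamma_0^k$. Let $\gamma_0:\R\to\S^2$ be the natural lift of $\Gamma_0$ and put $U_0=\bigcup_{t\in\R}\phi_{\gamma_0(t)}$; by Proposition \ref{prop:proposition2}.(3), $U_0$ is an open annulus.

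The trajectory $\gamma$ draws $\Gamma_0$ on $[s_1,s_2]$, so Proposition \ref{prop:20}.(1) produces a unique drawing component $J_0=(c,d)\supset[s_1,s_2]$ of $J^{\gamma}_{\Gamma_0}$. The crux is to show $J_0=\R$. Suppose for contradiction that $d<+\infty$. By Corollary \ref{cor:prop20}, any $\gamma(t_1)\notin U_0$ with $t_1\leq c$ must lie in a different connected component of $\S^2\setminus U_0$ from any $\gamma(t_2)\notin U_0$ with $t_2\geq d$. We derive a contradiction using non-wandering: choose $n$ large enough that the loop-drawing portion of $\gamma$ appears inside $I^n_{\F}(z)$ (after a base-point shift if needed), and let $W$ be the neighborhood of $z$ given by Lemma \ref{lemma:17}.(1) for this $n$. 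Non-wandering yields sequences $z_m\to z$ and $N_m\nearrow+\infty$ with $z_m,f^{N_m}(z_m)\in W$; Lemma \ref{lemma:17}.(1) applied to $z'=f^{N_m}(z_m)$ and $z''=z$ produces an $\F$-equivalence between a finite subpath of $\gamma$ located at parameters strictly greater than $d$ (for $N_m$ big enough) and a path whose initial segment reproduces the loop $\Gamma_0^k$. This yields a second drawing component of $J^{\gamma}_{\Gamma_0}$, contradicting Proposition \ref{prop:20}.(1). A symmetric argument applied to $f^{-1}$ (for which $z$ is also non-wandering) rules out $c>-\infty$. Consequently $J_0=\R$, $\gamma$ remains in $U_0$ for all parameters, and $\gamma$ is $\F$-equivalent to $\gamma_0$, which is case (2).

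The principal obstacle is the extension step above. Non-wandering is strictly weaker than recurrence, and turning it into a genuine second drawing component of $\gamma$ requires carefully chaining the $\F$-equivalences of Lemma \ref{lemma:17}.(1) with the uniqueness of the drawing component in Proposition \ref{prop:20}.(1), while controlling both endpoints of the perturbed subpath so that it really does draw $\Gamma_0$. A cleaner route to the same conclusion is to first pass to a point $z^{*}\in\omega_f(z)\cap\dom(I)$ (respectively in $\alpha_f(z)\cap\dom(I)$) and to invoke Lemma \ref{lemma:17}.(2), which directly embeds finite transverse trajectories of $z^{*}$ into arbitrarily late (respectively early) portions of $\gamma$; the delicate part is then to ensure that $z^{*}$ inherits the relevant loop structure from $z$.
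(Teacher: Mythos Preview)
The paper does not actually prove this corollary; it merely cites it as Proposition~27 of \cite{ferradura} and indicates that the proof uses Lemma~\ref{lemma:17} and Proposition~\ref{prop:20}. Your overall architecture---no horseshoe $\Rightarrow$ no $\F$-transverse self-intersection via Theorem~\ref{teo:horseshoe}, extract a simple loop $\Gamma_0$ via the scholium of Proposition~\ref{prop:proposition2}, then use Proposition~\ref{prop:20} to control the drawing component---is exactly the right skeleton and matches the paper's hint.

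The extension step, however, does not work as you have written it. When you apply Lemma~\ref{lemma:17}(1) with $z'=f^{N_m}(z_m)$ and $z''=z$, the conclusion is that $I^{n}_{\F}(f^{N_m}(z_m))$ is $\F$-equivalent to a subpath of $I^{n+2}_{\F}(f^{-1}(z))$, and the latter lives in $\gamma$ at parameters in $[-1,n+1]$, not at parameters beyond $d$. Lemma~\ref{lemma:17}(1) always embeds the short trajectory of $z'$ into the trajectory of $z''$ \emph{near the base point}; it cannot by itself place anything into $\gamma$ at late times. What the non-wandering hypothesis does give you is that the trajectory of the auxiliary point $z_m$ draws $\Gamma_0$ both near time $0$ and near time $N_m$, so by the uniqueness in Proposition~\ref{prop:20}(1) \emph{its} drawing component is long. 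The missing step is to transfer this long drawing back into $\gamma$: you must first fix a large $n'$, take the Lemma~\ref{lemma:17} neighborhood $W_{n'}\subset W_n$, use non-wandering to find $z'\in W_{n'}$ with $f^{N}(z')\in W_{n'}$ for some $N>n'$, and then apply Lemma~\ref{lemma:17}(1) with the pair $(z',z)$ and exponent $n'$ to embed $I^{n'}_{\F}(z')$---which is entirely inside the drawing component of the $z'$-trajectory---as a subpath of $\gamma|_{[-1,n'+1]}$. Uniqueness then forces $J_0$ to contain this long interval, and letting $n'\to\infty$ gives $d=+\infty$. Your text skips this chaining and asserts the wrong embedding direction.

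Your proposed alternative through $z^{*}\in\omega_f(z)\cap\dom(I)$ has an additional obstruction you do not address: such a $z^{*}$ need not exist, since $\omega_f(z)$ may lie entirely in $\fix(I)$ (this is precisely the situation in which case~(1) of the dichotomy occurs, as the paper remarks after the statement).
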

	{\begin{remark}\label{remark:scholium}
			As a scholium of this corollary we have that if we change the hypothesis that $f$ has no topological horseshoe to the hypothesis that $I_{\F}^{\Z}(z)$ has no $\F$-transverse self-intersection, where $z\in\dom(I)$ is a non-wandering point of $f$  then the conclusion of the corollary is also true: either item (\textit{1.}) or item (\textit{2.}) hold.
	\end{remark}}
	
	It is worth noting that the first case happens only when $\alpha(z)$ and $\omega(z)$ are included in $\fix(I)$, see the remark after Proposition $27$ in \cite{ferradura}.

	{\begin{remark}
			Proposition \ref{prop:20} and Corollary \ref{cor:prop20} deal with foliations of $\S^2$ and Corollary \ref{cor:prop23} deals with homeomorphisms of $\S^2$, but in all three cases the results can be extended for foliations of $\R^2$ and homeomorphisms of $\R^2$, respectively, by just taking a one point compactification and extending the foliations with a  singularity at the added point, {for Proposition} \ref{prop:20} and Corollary \ref{cor:prop20}, and by extending the isotopy by fixing the added point in Corollary \ref{cor:prop23}.
	\end{remark}}
	
	\section {Proof of Theorem A}\label{sec:thmA}
	
	Our {first} goal is to show the existence of topological horseshoes for transitive homeomorphisms of $\T^2$ with a specific type of  rotation set.
	\begin{proposition}\label{prop:rational-rotationset}
		Let $f\in\homeo(\T^2)$ be topologically transitive with a non-empty fixed point set. If there is an integer $q\geq1$ such that the power $g=f^q$ of $f$ is isotopic to identity and the rotation set of a lift $\check g\in\homeo(\R^2)$ of $g$ is a non-degenerate line segment with a rational slope then $f$ has a topological horseshoe.
	\end{proposition}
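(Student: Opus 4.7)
I would prove this by applying Theorem \ref{teo:horseshoe}: the plan is to exhibit an $\F$-transverse self-intersection in the whole transverse trajectory of a transitive point of $g$. Since having a topological horseshoe is a conjugation-invariant property, by Corollary \ref{cor:kk08} and Remark \ref{remark:kk08} we may assume that $\rho(\check g)=[a,b]\times\{0\}$ is a horizontal segment with $a<b$. Because $f$ has a fixed point $z^*$, so does $g$, and choosing the lift $\check g$ so that some $\check z^*\in\check\pi^{-1}(z^*)$ satisfies $\check g(\check z^*)=\check z^*$ forces $(0,0)\in\rho(\check g)$, so $a\le 0\le b$. Corollary \ref{cor:davalos} then gives that $\check g$ has uniformly bounded vertical deviation; in particular, $g$ lifts through $\hat\pi:\A\to\T^2$ to $\hat g\in\homeo_0(\A)$ whose orbits remain confined to horizontal strips of bounded height.

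Using Proposition \ref{prop:rotationset}(3) and (4), we obtain points $z_1,z_2\in\T^2$ whose rotation vectors are respectively $(a,0)$ and $(b,0)$, realized as generic points of ergodic measures at the extremal points of $\rho(\check g)$. By the transitivity of $f$ together with Lemma \ref{lemma:transitive}, there exists $z_0\in\T^2$ with $\omega_g(z_0)=\T^2$. I then fix a maximal isotopy $I$ of $g$ with $z^*\in\fix(I)$ (this is possible because $\check z^*$ is fixed by $\check g$, so there is an identity isotopy from $\mathrm{id}$ to $g$ fixing $z^*$, and Theorem \ref{teo:bclr} allows us to maximalize it), and a transverse foliation $\F$ given by Corollary \ref{cor:equivariant}. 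Since $\omega_g(z_0)=\T^2$ forces $z_0\notin\fix(g)\supseteq\fix(I)$, we have $z_0\in\dom(I)$. By Lemma \ref{lemma:17}(2), arbitrarily long finite pieces of $I^{\N}_\F(z_1)$ and $I^{\N}_\F(z_2)$ are $\F$-equivalent to subpaths of $I^{\N}_\F(g^m(z_0))$ for all sufficiently large $m$.

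The heart of the proof, and the principal obstacle, is to produce an $\F$-transverse self-intersection in some iterate $I^q_\F(z_0)$. The idea is that, after lifting to $\R^2$, a length-$n$ piece of the trajectory of $z_i$ travels horizontally at net rate $\rho_i\in\{a,b\}$ and stays in a horizontal strip of bounded height because of the bounded vertical deviation. For $n$ large, and after applying a suitable deck translation $(p_1,0)\in\Z^2$ to one of the pieces, their strips overlap and the piece at the faster rate $b$ overtakes the piece at the slower rate $a$, forcing a topological crossing between the two lifted transverse paths. The technical core is then to promote this topological crossing into an $\F$-transverse intersection in the sense of Definition \ref{def:Fintersection}: this requires controlling the relative ordering of leaves (Definition \ref{def:acimaabaixo}) at the endpoints of the pieces, which I expect to do using the drawing/crossing analysis of Proposition \ref{prop:20} and Corollary \ref{cor:prop20}, leveraging the bounded vertical deviation to keep the relevant leaves in a controlled geometric configuration. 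Once the $\F$-transverse self-intersection of $I^q_\F(z_0)$ is produced, Theorem \ref{teo:horseshoe} yields a topological horseshoe for $g$, hence for $f$.
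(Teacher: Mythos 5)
Your preliminary reductions (conjugating via Corollary \ref{cor:kk08} so that $\rho(\check g)$ is a horizontal segment, choosing the lift so a fixed point lifts to a fixed point and hence $(0,0)\in\rho(\check g)$, and invoking Corollary \ref{cor:davalos} for uniformly bounded vertical deviation) coincide with the paper's. But the step you yourself call the heart of the proof is a genuine gap, not a deferred technicality. First, ``overtaking'' inside overlapping horizontal strips does not force even a topological crossing of the two lifted arcs: two arcs confined to a strip $\R\times[c,d]$, one starting to the left of the other's start and ending to the right of the other's end, can perfectly well be disjoint (one simply slides above the other inside the strip), so bounded deviation plus distinct average horizontal speeds yields no planar intersection by itself. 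Second, even a topological crossing would not give an $\F$-transverse intersection in the sense of Definition \ref{def:Fintersection}: transverse paths can meet, and even meet the same leaves, without any $\F$-transverse intersection, and the tools you cite for the ``promotion'' step, Proposition \ref{prop:20} and Corollary \ref{cor:prop20}, are statements about paths \emph{assumed} to have no $\F$-transverse self-intersection that draw a simple loop; they extract consequences from the absence of intersections, they do not manufacture intersections from rotation-number differences. The statement you actually need --- that a recurrent trajectory with bounded vertical deviation realizing two distinct horizontal rotation numbers must have an $\F$-transverse self-intersection --- is essentially the content of the deep annular results of \cite{ferradura}, and nothing in your sketch proves it.

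The paper sidesteps exactly this difficulty. After the same reductions it quotients $\R^2$ by the unit horizontal translation to get $\hat g$ on $\A$; bounded vertical deviation makes every orbit bounded (so every $\omega$-limit in $\A$ is nonempty), and transitivity together with Lemma \ref{lemma:omega-limit} makes $\hat g$ non-wandering (Corollary \ref{cor:nonwandering}). Assuming no horseshoe, Theorem \ref{teo:A} then gives a well-defined rotation-number map, continuous on the non-wandering set and constant on $\omega$-limit sets; since every point of $\A$ lies in the $\omega$-limit of one of countably many lifts of the transitive orbit, continuity forces this map to be constant, contradicting the fact that the lifted fixed point has rotation number $0$ while an extremal point $z_a$ has nonzero rotation number. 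If you insist on a direct application of Theorem \ref{teo:horseshoe}, you would in effect have to reprove a substantial portion of that annular theory; as written, your central step is unsupported.
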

	
	Before proving this proposition, let's prove some useful results.

	\begin{lemma}\label{lemma:omega-limit}
		Let $h\in\homeo(\T^2)$. Suppose that $h$ admits a lift $\hat h\in\homeo(\A)$ to the annulus. Moreover, suppose that
		\begin{enumerate}
			\item The map $\hat h$ commutes with the integer vertical translations, that is, 
			\begin{equation*}
				\hat h(\hat w + (0,p))=\hat h(\hat w)+(0,p),\;\forall p\in\Z\;\;\mbox{and}\;\;\hat w\in\A;
			\end{equation*}
			\item There is a real number $M>0$ such that for all $\hat w\in\A$ we have \begin{equation*}
				\left|\pr_2(\hat h^n(\hat w)-\hat w)\right|\leq M,\;\forall n\in\Z. 
			\end{equation*}
		\end{enumerate} 
		Then, if $w,w'\in\T^2$ are such that $w\in \omega_{h}(w')$, then for every $\hat w\in \hat\pi^{-1}(w)$ there exists some $\hat w'\in\hat\pi^{-1}(w')$ such that $\hat w\in \omega_{\hat h}(\hat w')$.
	\end{lemma}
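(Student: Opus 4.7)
The plan is to pick any lift $\hat w_0'$ of $w'$, realize the convergence $h^{n_k}(w')\to w$ at the level of $\A$ (up to the vertical $\Z$-action on fibers), and then use the uniform bound from hypothesis (2) to show that only finitely many integer translations ever occur, so that a pigeonhole argument delivers a single starting lift $\hat w'\in\hat\pi^{-1}(w')$ that works.

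More concretely, I would first fix an increasing sequence $(n_k)_{k\in\N}$ of positive integers with $h^{n_k}(w')\to w$ in $\T^2$, pick any $\hat w_0'\in\hat\pi^{-1}(w')$, and note that $\hat h^{n_k}(\hat w_0')$ projects onto $h^{n_k}(w')$ under $\hat\pi$. Since the fibers of $\hat\pi$ are precisely the vertical $\Z$-orbits in $\A$, for each $k$ there exists an integer $p_k\in\Z$ such that
$$\hat h^{n_k}(\hat w_0')-(0,p_k)\,\longrightarrow\,\hat w\quad\text{in }\A.$$
Setting $\hat w_k':=\hat w_0'-(0,p_k)\in\hat\pi^{-1}(w')$, hypothesis (1) iterated in time gives $\hat h^{n_k}(\hat w_k')=\hat h^{n_k}(\hat w_0')-(0,p_k)$, so $\hat h^{n_k}(\hat w_k')\to\hat w$.

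The key step is to prove that $(p_k)_{k\in\N}$ takes only finitely many values. From hypothesis (2),
$$\bigl|\pr_2\bigl(\hat h^{n_k}(\hat w_k')-\hat w_k'\bigr)\bigr|\leq M\qquad\text{for every }k,$$
and since $\pr_2(\hat h^{n_k}(\hat w_k'))\to \pr_2(\hat w)$, the sequence $\pr_2(\hat w_k')=\pr_2(\hat w_0')-p_k$ is bounded, so $(p_k)_{k\in\N}$ is a bounded sequence of integers. By the Pigeonhole Principle there is a single $p\in\Z$ and a subsequence $(n_{k_j})_{j\in\N}$ with $p_{k_j}=p$ for every $j$. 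Then $\hat w':=\hat w_0'-(0,p)\in\hat\pi^{-1}(w')$ satisfies $\hat h^{n_{k_j}}(\hat w')\to\hat w$ with $n_{k_j}\nearrow+\infty$, which is exactly the statement $\hat w\in\omega_{\hat h}(\hat w')$.

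I do not expect any serious obstacle: the whole argument is a compactness/pigeonhole reduction, and the essential point is that hypothesis (2) is precisely the tool that prevents the needed integer translations $p_k$ from escaping to infinity. Hypothesis (1) is used only to commute the vertical translation by $(0,p_k)$ through the iterate $\hat h^{n_k}$.
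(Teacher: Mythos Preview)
Your proposal is correct and follows essentially the same approach as the paper: lift the convergence $h^{n_k}(w')\to w$ to $\A$ modulo a sequence of vertical integer translations $(0,p_k)$, use hypothesis~(2) to bound the $p_k$, and apply the Pigeonhole Principle to extract a constant translation. The paper's version organizes the bookkeeping slightly differently (it places $\hat w$ and an initial lift of $w'$ in a common fundamental domain $\T^1\times[d,d+1)$ before estimating), but the logical content is the same.
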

	\begin{proof}
		Let $w,w'\in\T^2$ be as in the hypothesis. Choose some $\hat{w}\in \hat\pi^{-1}(w)$, then there is a fundamental domain $\hat D=\T^1\times[d,d+1)$ in $\A$ that contains $\hat w$. Moreover, for all $\hat u\in\hat D$ we have
		{	\begin{equation}\label{eq:w}
				d\leq\pr_2(\hat u)< d+1. 
		\end{equation} }
		
		As $w\in\omega_h(w')$ 	we may assume, possibly by taking a subsequence, that 
		\[
		d_{\T^2}(h^{m_k}(w'),w)<1/k\,\,,\;\mbox{for all $k\in \N$}.
		\]
		
		As $\hat{D}$ is a fundamental domain, the above property and assumption {(1)} imply that there exist {$\hat w'\in\hat\pi^{-1}(w')\cap\hat D$} and a sequence of integers $(p_k)_{k\in\N}\subset\Z$ such that 
		{	\begin{equation}\label{eq:ww'}
				d_{\A}(\hat h^{m_k}(\hat w')+(0,p_k),\hat w)<1/k\,\,,\;\forall k\in\N. 
		\end{equation}}	
		
		So, {it follows} from  (\ref{eq:w}) and (\ref{eq:ww'}) that 
		{	\begin{equation}\label{eq:pr21}
				d-\frac{1}{k}<\pr_2(\hat h^{m_k}(\hat w'))+p_k<d+1+\frac{1}{k}, \;\forall k\in\N. 
		\end{equation}}
		
		Furthermore, assumption (2) and inequality (\ref{eq:w}) applied to this $\hat w'\in\hat\pi^{-1}(w')\cap\hat D$ give us that	
		{	\begin{equation}\label{eq:pr22}
				d-M<\pr_2(\hat h^{m_k}(\hat w'))<d+1+M.
		\end{equation}}
		
		Finally,  (\ref{eq:pr21}) and (\ref{eq:pr22}) imply, for all $k\in\N$, that	
		\[\begin{array}{rl}
			-d-1-M+d-\dfrac{1}{k}<p_k<-d+M+d+1+\dfrac{1}{k}\Rightarrow&|p_k|<M+1+\dfrac{1}{k}<M+2
		\end{array}\]
		
		{\noindent	because we have that $k\in\N$ and then {$1/k\leq1$}.}
		
		This means that the discrete set $\{p_k\;|\;{k\in\N}\}\subset\Z$ is finite. Indeed the cardinality of $\{p_k\;|\;k\in\N\}$, namely $\#\{p_k\;|\;k\in\N\}$ is less than or equal to $2\lfloor M+2\rfloor+1$, where $\lfloor x\rfloor$ is the largest integer less than or equal to the positive real number $x$.
		
		Then,  by the Pigeonhole Principle, there is some fixed $p_0$ in $\Z$ and a subsequence $(p_{k_{i}})_{i\in\N}$ of $(p_{k})_{k\in\N}$ such that $p_{k_{i}} \equiv p_0$, for all $i\in\N$.
		
		So,\[d_{\A}(\hat h^{m_{k_i}}(\hat w'+(0,p_0)),\hat w)<1/k_i,\;\forall i\in\N\] and this imply that $\hat w\in\omega_h(\hat{w}'+(0,p_0))$.
	\end{proof}

	As a Corollary of Lemma \ref{lemma:transitive} and Lemma \ref{lemma:omega-limit} we have: 
	
	\begin{corollary}\label{cor:nonwandering}
		Let $f\in\homeo(\T^2)$ be topologically transitive. If there is a power $h=f^q$, $q\geq 1$, of $f$ that admits a lift $\hat h\in\homeo(\A)$ satisfying the assumptions $(1.)$ and $(2.)$ of Lemma \ref{lemma:omega-limit} then $\hat h$ is non-wandering.
	\end{corollary}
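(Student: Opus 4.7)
The plan is to reduce the statement directly to the two lemmas just proved. Fix an arbitrary point $\hat w\in\A$, let $w=\hat\pi(\hat w)\in\T^2$, and let $\hat U$ be any open neighborhood of $\hat w$ in $\A$. The goal is to produce some $n\geq 1$ with $\hat h^n(\hat U)\cap\hat U\neq\emptyset$.

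First I would invoke transitivity of $f$ on the compact surface $\T^2$ to pick a point $z\in\T^2$ with $\omega_f(z)=\T^2$. Applying Lemma \ref{lemma:transitive} with $g=h=f^q$ gives
\[
\bigcup_{r=0}^{q-1}\omega_h(f^r(z))=\T^2,
\]
so there is some $r\in\{0,\ldots,q-1\}$ with $w\in\omega_h(f^r(z))$. Setting $u=f^r(z)\in\T^2$, we have $w\in\omega_h(u)$. Because $\hat h$ satisfies the two hypotheses of Lemma \ref{lemma:omega-limit}, that lemma applies to the pair $(u,w)$ with the prescribed lift $\hat w\in\hat\pi^{-1}(w)$, producing some $\hat u\in\hat\pi^{-1}(u)$ with $\hat w\in\omega_{\hat h}(\hat u)$.

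Finally I would use the standard fact that any point in an $\omega$-limit set is non-wandering. Concretely, there is a strictly increasing sequence $(n_k)_{k\in\N}$ of positive integers with $\hat h^{n_k}(\hat u)\to\hat w$. Choose $K$ large enough that $\hat h^{n_k}(\hat u)\in\hat U$ for all $k\geq K$. Then for any $k>l\geq K$ the point $\hat h^{n_k}(\hat u)=\hat h^{n_k-n_l}(\hat h^{n_l}(\hat u))$ lies in $\hat U\cap\hat h^{n_k-n_l}(\hat U)$, and $n_k-n_l\geq 1$. Hence $\hat w\in\Omega(\hat h)$, and since $\hat w\in\A$ was arbitrary, $\hat h$ is non-wandering.

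There is no real obstacle here: the two hypotheses on $\hat h$ are exactly what Lemma \ref{lemma:omega-limit} needs to promote an $\omega$-limit relation on $\T^2$ to one on $\A$, and transitivity of $f$ supplies, through Lemma \ref{lemma:transitive}, the source point $u$ whose $h$-orbit accumulates on the arbitrary point $w$. The only mild subtlety is that one starts from transitivity of $f$ rather than of $h$, which is why Lemma \ref{lemma:transitive} is used to pass to the $q$-th power before lifting to the annulus.
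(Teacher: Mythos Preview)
Your proof is correct and follows essentially the same approach as the paper: pick a point with dense $f$-orbit, use Lemma \ref{lemma:transitive} to find an $r$ with $\hat\pi(\hat w)\in\omega_h(f^r(z))$, then lift this $\omega$-limit relation to $\A$ via Lemma \ref{lemma:omega-limit} and conclude that $\hat w$ is non-wandering. You even spell out explicitly the standard fact that points in an $\omega$-limit set are non-wandering, which the paper leaves implicit.
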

	\begin{proof}
		Let $\hat{w}\in\A$ be any point {and $z\in\T^2$ such that $\alpha_f(z)=\omega_f(z)=\T^2$}. Then, by Lemma \ref{lemma:transitive}, there exist some $r\in\{0,\cdots,q-1\}$ such that \[\hat\pi(\hat w)\in\omega_{h}(f^r( z)).\] 
		
		By Lemma \ref{lemma:omega-limit} we know that there is some lift $\hat z_r\in\hat\pi^{-1}(f^r(z))$ such that $\hat w\in\omega_{\hat h}(\hat z_r)$. Therefore $\hat w\in\A$ is non-wandering for $\hat h$.

		As $\hat w\in\A$ is any point, we conclude that $\hat h$ is non-wandering.
	\end{proof}

	\begin{proof}[Proof of Proposition \ref{prop:rational-rotationset}]
		Suppose, by contradiction, that $f$ does not have a topological horseshoe. Then no power of $f$ has a topological horseshoe.
		
		Let $q\geq 1$ be an integer such that the power $g=f^q$ of $f$ is isotopic to identity and the rotation set of a lift $\check g\in\homeo(\R^2)$ of $g$, is a non-degenerate line segment with a rational slope. 
		
		First of all, we will prove that $g=f^q$ is topologically conjugate to a homeomorphism $h\in\homeo(\T^2)$ that has a lift $\check h\in\homeo(\R^2)$ such that $\rho(\check h)$ is a horizontal segment containing the origin which is realized by periodic orbit.
		
		Let $z_0\in\fix(f)$, then $z_0\in\fix(g)$, and as $g$ is isotopic to identity, there is some $(p_1,p_2)\in\Z^2$ such that $\check g(\check z_0)=\check z_0+(p_1,p_2)$, {for any $\check z_0\in\check\pi^{-1}(z_0)$}. There is no loss of generality in supposing that $(p_1,p_2)=(0,0)$, because if it is not, then we can change the lift $\check g$ for the lift $\check g'=\check g-(p_1,p_2)$ which satisfies that $\check g'(\check z_0)=\check z_0$ and $\rho(\check g')=\rho(\check g)-(p_1,p_2)$. Thus $(0,0)\in\rho(\check g)$ is realized by $z_0\in\T^2$.
		
		As, by assumption $\rho(\check g)$ is a non-degenerate line segment with a rational slope then, by Corollary \ref{cor:kk08}, there is a matrix $A\in\operatorname{GL(2,\Z)}$ such that $\rho(A\check gA^{-1})=A\rho(\check g)$ is a horizontal segment and if $g_A$ is the homeomorphism of $\T^2$ induced by $A$ then we have that $g_A(z_0)\in\T^2$ is fixed by $g_Agg_A^{-1}$ and moreover
		
		\begin{equation*}
			\begin{array}{rl}
				\rho(g_A(z_0),A\check gA^{-1})=&\lim_{n\to+\infty}\dfrac{(A\check gA^{-1})^n(A\check z_0)-A\check z_0}{n}\vspace{.1cm}\\
				=&A\left(\lim_{n\to+\infty}\dfrac{\check g^n(\check z_0)-\check z_0}{n}\right)\vspace{.1cm}\\
				=&A\begin{pmatrix}
					0\\0
				\end{pmatrix}\vspace{.1cm}\\
				=&\begin{pmatrix}
					0\\0
				\end{pmatrix},
			\end{array}
		\end{equation*}
		for any $A\check z_0\in\check\pi^{-1}(g_A(z_0))$.
		
		Thus, under a topological conjugacy,  we can assume that $g=f^q$ is a homeomorphism of $\T^2$ isotopic to identity such  the rotation set of some lift $\check g\in\homeo(\R^2)$ of $g$ is a horizontal segment containing $(0,0)$ and such that $(0,0)\in\rho(\check g)$ is realized by $z_0\in\fix(g)$.
		
		By Corollary \ref{cor:davalos}, $\check g$ has uniformly bounded deviations in the perpendicular direction of $\rho(\check g)$, so there is a real number $M>0$ such that 
		{	\begin{equation}\label{eq:deviations}
				\left|\pr_2(\check g^n(\check w)-\check w)\right|\leq M,\;\;\forall n\in\Z\;\;\mbox{and}\;\;\forall\check w\in\R^2. 
		\end{equation}}
		
		Furthermore, as $\rho(\check g)$ is a non-degenerate line segment containing the origin, let $a\in\rho(\check g)\backslash\{(0,0)\}$ be an extremal point. Then we know ({items (3) and (4) of Proposition \ref{prop:rotationset}})  there is some point $z_a\in\T^2\backslash\{z_0\}$ such that $\rho(z_a,\check g)=a\neq(0,0)$.
		
		By assumption, we have that $f\in\homeo(\T^2)$ is topologically transitive. So, there is a point $z\in\T^2$ such that $\omega_f(z)=\T^2$. Then, by Lemma \ref{lemma:transitive}, we have 
		\begin{equation*}
			\bigcup_{r=0}^{q-1}\omega_g(f^r(z))=\T^2.
		\end{equation*}
		
		{Now, let $I\in\I_g$ be an identity isotopy of $g$ such that $z_0\in\fix(I)$, then $I$ is lifted to an identity isotopy $\check I$ of $\check g$. 
			
			The quotient space of $\R^2$ by a unit horizontal translation is homeomorphic to the annulus $\A$. We get an identity isotopy $\hat I=(\hat g_t)_{t\in[0,1]}$ on $\A$ by projection, as well a homeomorphism $\hat g=\hat g_1$.}
		
		And
		$\hat g(\hat w + (0,p))=\hat g(\hat w)+(0,p),$ for all $p\in\Z$, because the following diagram commutes (remember Notation \ref{not:notacao}) 
		\begin{equation*}
			\begin{tikzcd}
				\mathbb{R}^2 \arrow[rr, "\check g"] \arrow[rd, "\check\tau"] \arrow[dd, "\check\pi"'] &                                                                & \mathbb{R}^2 \arrow[rd, "\check\tau"] \arrow[d, no head] &                               \\
				& \mathbb{A} \arrow[rr, "\hat g\hspace{1.5cm}"] \arrow[ld, "\hat\pi"] & {} \arrow[d]                                          & \mathbb{A} \arrow[ld, "\hat\pi"] \\
				\mathbb{T}^2 \arrow[rr, "g"]                                                 &                                                                & \mathbb{T}^2                                          &                              
			\end{tikzcd} 
		\end{equation*}
		
		Moreover, it follows from the inequation (\ref{eq:deviations}) that 
		\begin{equation*}
			\left|\pr_2(\hat g^n(\hat w)-\hat w)\right|\leq M,\;\;\forall n\in\Z\;\;\mbox{and}\;\;\forall\check\tau(\check w)=\hat w\in\A
		\end{equation*}
		
		This means that for all points $\hat w\in\A$, the orbit of $\hat w$ is contained in a compact subset of $\A$. And, consequently, we have that the $\omega_g(\hat w)$ is non-empty, for all $\hat w\in\A$. Furthermore, by Corollary \ref{cor:nonwandering}, we have that $\hat g\in\homeo(\A)$ is non-wandering.

		By assumption $g\in\homeo_0(\T^2)$ has no topological horseshoe. Thus, by Remark \ref{remark:horseshoe} we have that $\hat g$ can not have a topological horseshoe. 
		
		So, $\hat g:\A\to\A$ is a non-wandering homeomorphism isotopic to identity such that the $\omega$-limit of any point $\hat w\in\A$ is non-empty and has no topological horseshoe. Then Theorem \ref{teo:A} implies that the map $\rot_{\check g}:\A\to\R$ that associates to each point $\hat w\in\A$ its rotation number $\rot(\hat w', \check g)$ is well defined and continuous. 	
		Let \[\hat G= \bigcup_{p\in\Z}\left( \bigcup _{r=0}^{q-1}\{\hat z_r+(0,p)\}\right),\] be the set of all lifts of the set $\{z, f(z), \cdots, f^{q-1}(z)\}$, where $\hat z_r\in\hat\pi^{-1}(f^r(z))$ for $r\in\{0,1,\cdots, q-1\}$. 
		
		Given any $\hat w\in\A$, Lemma \ref{lemma:omega-limit} implies that there exists some point $\hat w'\in\hat G$ such that $\hat w \in \omega_{\hat g}(\hat w')$. And by item (2) of Theorem \ref{teo:A} the rotation number of $\hat w$ must be the same as the rotation number of $\hat w'$.  But the set $\hat G$ is an enumerable set. Therefore, as the map $\rot_{\check g}$ is continuous, it must be constant.
		
		Which is a contradiction, because if we take $\hat z_0\in\hat\pi^{-1}(z_0)$ and $\hat z_a\in\hat\pi^{-1}(z_a)$ then $\hat z_0,\hat z_a\in\Ne^+(\hat g)$, as observed. Moreover, $\rot(\hat z_0,\check g)=\pr_1(\rho(z_0, \check g))=0$ and $\rot(\hat z_a,\check g)=\pr_1(\rho(z_a, \check g))=\pr_1(a)\neq 0$ because $a\in\rho(\check g)$ is a non-zero vector of a horizontal non-degenerate line segment of $\R$ containing the origin.
		
		Therefore, we have proved by contradiction that $f$ must have a topological horseshoe.
	\end{proof}

	\begin{theoremA}
		Let $f$ be a transitive homeomorphism of $\T^2$ and let $g$ be a power of $f$ such that $g$ is isotopic to the identity. If $g$ has both a fixed point and a non-fixed periodic point, then $f$ has a topological horseshoe.
	\end{theoremA}

	\noindent\textit{Proof.}\hspace{.1cm} Let $g=f^q\in\homeo_0(\T^2)$.	Let $z_0\in\T^2$ be a fixed point for $g$, that is $z_0\in\fix(g)$; $q_1>1$ be the minimal period of a non-fixed periodic point $z_1\in\T^2$ for $g$, that is $z_1\in\fix(g^{q_1})\setminus\fix(g)$. 
	
	As $f$ is transitive, let $z\in\T^2$ be a point such that $\omega_f(z)=\alpha_f(z)=\T^2$. Then by Lemma \ref{lemma:transitive} we have \[\bigcup_{r=0}^{q-1} \omega_g(f^r(z))=\bigcup_{r=0}^{q-1} \alpha_g(f^{-r}(z))=\T^2.\]
	
	So, there are some $r,s\in\{0,1,\cdots,q-1\}$ such that $z\in\omega_g(f^r(z))$ and $z\in\alpha_g(f^{-s}(z))$. But this imply that $z\in\omega_g(z)\cap\alpha_g(z)$. We will prove this fact for the $\omega$-limit, for $\alpha$-limit the proof is analogous.
	
	Note that if $z\in\omega_g(f^r(z))$ then \[f^r(z)\in\omega_g(f^{2r}(z))\Rightarrow\;\cdots\Rightarrow f^{(q-1)r}(z)\in\omega_g(f^{qr}(z)).\] This last $\omega_g$-limit set is equal to $\omega_g(z)$, because $g=f^q$ and it is $g$-invariant. So, we have \[z\in\omega_g(f^r(z))\subset\omega_g(f^{2r}(z))\subset\cdots\subset\omega_g(z),\] because is a fact that if $x\in\omega_g(y)$ then $\omega_g(x)\subset\omega_g(y)$.
	
	Let $\I_g$ be the set of identity isotopies for $g$, pick $I_0\in\I_g$ such that $z_0\in\fix(I_0)$ and let  $I\in\I_g$ be a maximal isotopy for $g$ such that $I_0\preceq I$.
	
	
	Let $\check{I}=(\check{g}_t)_{t\in[0,1]}$ be an identity isotopy that lifts $I$ to $\R^2$, $\check{g}:=\check{g}_1$ be the lift of $g$ to $\R^2$ given by the isotopy $I$.
	
	As $z_1\in\fix(g^{q_1})\setminus\fix(g)$ with $q_1> 1$ we know that the trajectory of $z_1$ by the isotopy $I$ until order $q_1$, $I^{q_1}(z_1)$, is a loop on $\T^2$. Then we have two possibilities for it: 
	\begin{enumerate}
		\item $I^{q_1}(z_1)$ is homotopic to zero on $\T^2$;
		\item Or not, which means that $I^{q_1}(z_1)$ is an essential loop on $\T^2$.
	\end{enumerate}
	
	We will proceed with the proof of the theorem considering these two cases separately. First we will show for case (2) and then for case (1).
	
	\subsection{Essential loop on $\T^2$}\label{subsection:essential-loop}
	
	{Suppose that $I^{q_1}(z_1)$ is an essential loop on $\T^2$ then} there exists $(p_1,p_2)\in\Z^2\backslash\{(0,0)\}$ such that \[\check{g}^{q_1}(\check{z}_1)=\check{z}_1+(p_1,p_2),\;\;\forall\check{z}_1\in\check\pi^{-1}(z_1).\]
	Moreover, $z_0\in\fix(I)$ implies $\check{g}(\check{z}_0)=\check{z}_0,$ for any $\check{z}_0\in\check\pi^{-1}(z_0).$
	
	This implies that $\rho(z_0, \check{g})=(0,0)$ and $\rho(z_1, \check{g})=\frac{1}{q_1}(p_1,p_2)$. And as $(p_1,p_2)\neq(0,0)$ we must have that $\rho(\check{g})$ has non-empty interior or it is a non-degenerate line segment of rational slope. 
	
	\begin{enumerate}
		\item { If $\operatorname{int}(\rho({\check{g}}))\neq\emptyset$ then Theorem 1 due to Llibre and Mackay (\cite{llibre1991rotation}) together with the fact that all rational vectors in $\operatorname{int}(\rho({\check{g}}))$ are realized, due to Franks \cite{franks1989realizing},  imply that $g$ has a topological horseshoe.}
		\item If $\rho(\check{g})$ is a segment of rational slope containing $(0,0)$ and $\frac{1}{q_1}(p_1,p_2)$, and moreover these vectors are realized by $z_0,z_1\in\T^2$, respectively, then Proposition \ref{prop:rational-rotationset} implies that $f$ has a topological horseshoe. 
	\end{enumerate}

	\subsection{Loop homotopic to zero on $\T^2$}
	
	Before proving this case, we present an easy result on closed transversal sections to topological flows on the plane that helps us to show that the set of leaves of an $\F$-transverse loop homotopic to zero in $\T^2$ is an inessential set that we prove next. 
	
	\begin{definition}[see \cite{strictly}]\label{def:essentialsets}
		An open subset $U$ of $\T^2$ is \textit{inessential} if every loop in $U$ is homotopic to zero in $\T^2$. Otherwise, $U$ is \textit{essential}. An arbitrary set $E$ of $\T^2$ is called inessential if it has some inessential neighborhood. And, finally, $E\subset\T^2$ is \textit{fully essential} if $\T^2\backslash E$ is inessential. 
	\end{definition}
	
	It follows immediately from the definition that if $U\subset\T^2$ is open and connected then $U$ is inessential if and only if, for every { $\check U\subset\check\pi^{-1}(U)$}, $\check U\cap\left(\check U+(p_1,p_2)\right)=\emptyset$ for any $(p_1,p_2)\in\Z^2\backslash\{(0,0)\}$. 
	
	{ Let $\check\F$ be a singular oriented foliation on $\R^2$,  ${\check\Gamma}:\T^1\to\dom(\check\F)$ be an $\check\F$-transverse simple loop on $\dom(\check\F)$ and $\check\gamma:\R\to\dom(\check\F)$ its natural lift. We will denote by $\inte({\check\Gamma})$ and $\exte({\check\Gamma})$ the bounded and unbounded connected components of $\R^2\backslash{\check\Gamma}$, respectively.
		
		Let $\check{U}=\bigcup_{t\in\R}\phi_{{\check\gamma}(t)}$ be the open topological annulus of the leaves that are crossed by ${\check\Gamma}$ on $\R^2$ (see Proposition \ref{prop:proposition2}).  Similarly, we will denote $\inte(\check{U})$ to the bounded connected component of $\R^2\backslash\check{U}$ and $\exte{(\check U)}$ to  the union of the unbounded connected components of $\R^2\backslash\check{U}$. It is worth noting that $\inte(\check{U})\subset\inte(\check{\Gamma})$ and it is a compact subset of $\R^2$.}
	
	\begin{lemma}\label{lema:conjlimite}
		Let $\check{\Gamma}$ be an $\check\F$-transverse simple loop in $\R^2$, $\check{\gamma}$ its natural lift and $\check{U}=\bigcup_{t\in\R}\phi_{\check{\gamma}(t)}$ the open topological annulus of the leaves that crosses $\check{\Gamma}$ in $\R^2$. Then one and only one of the following properties {holds}:
		\begin{enumerate}
			\item For all $\check \phi\subset \check U$, one has that $\omega(\check{\phi})\neq\emptyset$, $\omega(\check{\phi})\subset\inte(\check{U})$ and $\alpha(\check{\phi})\subset\exte(\check{U})$;
			\item For all $\check \phi\subset \check U$, one has that $\alpha(\check{\phi})\neq\emptyset$,  $\alpha(\check{\phi})\subset\inte(\check{U})$ and $\omega(\check{\phi})\subset\exte(\check{U})$.	
		\end{enumerate}
	\end{lemma}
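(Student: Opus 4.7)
The plan is to exploit the one-intersection property for transverse simple loops on genus zero surfaces. Since $\R^2$ has genus zero (or equivalently, compactify to $\S^2$ as in the remark closing Section \ref{sec:forcing}), the Remark following Proposition \ref{prop:proposition1} ensures that each leaf $\check\phi\subset\check U$ meets $\check\Gamma$ at exactly one point $\check z$. This splits $\check\phi$ into the two half-leaves $\check\phi^-_{\check z}$ and $\check\phi^+_{\check z}$, each lying in a different connected component of $\R^2\setminus\check\Gamma$.

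First I would establish the dichotomy. Using the local model from Definition \ref{def:transverse}, positive transversality fixes, at each crossing, which of $\inte(\check\Gamma)$ and $\exte(\check\Gamma)$ contains the positive half-leaf. By a connectedness argument along $\check\Gamma$ (the crossing locus is homeomorphic to $\check\Gamma$, hence connected, and the side containing $\check\phi^+_{\check z}$ depends continuously on the crossing point), this choice is globally constant. So exactly one of the following holds: either $\check\phi^+_{\check z}\subset\inte(\check\Gamma)$ and $\check\phi^-_{\check z}\subset\exte(\check\Gamma)$ for every leaf $\check\phi\subset\check U$ (case 1), or the opposite (case 2). By reversing the orientation of $\check\F$, it suffices to treat case 1.

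In case 1, $\check\phi^+_{\check z}\subset\inte(\check\Gamma)$ is contained in the compact set $\overline{\inte(\check\Gamma)}$, and embedding $\check\F$ into a topological flow via Whitney's theorem yields $\omega(\check\phi)\neq\emptyset$ and $\omega(\check\phi)\subset\overline{\inte(\check\Gamma)}$. To sharpen this to $\omega(\check\phi)\subset\inte(\check U)$, I would use flow-invariance of $\omega(\check\phi)$: any point of $\omega(\check\phi)\cap\dom(\check\F)$ lies on a leaf $\check\phi''$ entirely contained in $\omega(\check\phi)$, and any point of $\omega(\check\phi)\cap\check\Gamma$ also generates a full leaf $\check\phi''\subset\check U$ inside $\omega(\check\phi)$. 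In either case, $\check\phi''\subset\check U$ would force $\check\phi''^{\,-}_{\check z''}\subset\exte(\check\Gamma)$ (case 1), contradicting $\omega(\check\phi)\subset\overline{\inte(\check\Gamma)}$. Hence $\omega(\check\phi)\cap(\check U\cup\check\Gamma)=\emptyset$, so $\omega(\check\phi)\subset\inte(\check\Gamma)\setminus\check U$. A short topological observation, using that $\check U$ is an embedded open annulus in $\R^2$ with $\check\Gamma\subset\check U$, shows $\exte(\check U)\cap\inte(\check\Gamma)=\emptyset$ and therefore $\inte(\check\Gamma)\setminus\check U=\inte(\check U)$, giving the desired inclusion.

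The analogous argument applied to the negative half-leaf yields $\alpha(\check\phi)\subset\exte(\check U)$; the $\alpha$-limit need not be nonempty since $\exte(\check\Gamma)$ is unbounded, which matches the statement. Mutual exclusion of the two cases is automatic, since in case 1 the nonempty set $\omega(\check\phi)$ lies in $\inte(\check U)$, whereas case 2 would place it in $\exte(\check U)$. The main obstacle is the dichotomy step: the intuition that ``all leaves cross $\check\Gamma$ in the same rotational sense'' must be turned into a precise argument using the orientation-preserving chart $h$ in Definition \ref{def:transverse} together with the connectedness of $\check\Gamma$; everything else is a consequence of compactness, flow-invariance of limit sets, and elementary planar topology of the embedded annulus $\check U$.
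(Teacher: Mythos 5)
Your proposal is correct and takes essentially the same approach as the paper: the one-crossing property of Remark \ref{prop:1}, the dichotomy of half-leaf sides made uniform over all leaves of $\check U$ by a local-constancy/connectedness argument along $\check\Gamma$, and compactness of $\inte(\check\Gamma)\cup\check\Gamma$ to obtain the limit-set conclusions. The only minor variation is the exclusion step: the paper rules out $\omega(\check\phi)$ (resp.\ $\alpha(\check\phi)$) meeting $\check U$ via Poincar\'e--Bendixson and the absence of singularities and closed leaves in $\check U$, while you use invariance of the limit sets together with the uniform dichotomy, which is an equivalent rendering of the same step.
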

	{\begin{proof}
			Fix, arbitrarily, a leaf $\check{\phi}$ contained in $\check U$. So $\check{\phi}\cap\check{\Gamma}=\{z\}$ and by Remark \ref{prop:1} 
			it is true that either $\check{\phi}^+_z\subset\inte(\check{\Gamma})$ and $\check{\phi}^-_z\subset\exte(\check{\Gamma})$ happen or the opposite happens, exclusively.

			If $\check{\phi}^+_z\subset\inte(\check{\Gamma})$ and $\check{\phi}^-_z\subset\exte(\check{\Gamma})$ then $\omega(\check{\phi})\subset\inte(\check{\Gamma})\cup\check{\Gamma}$ and $\alpha(\check{\phi})\subset\exte(\check{\Gamma})\cup\check{\Gamma}$.  
			By Poincar{\'e}-Bendixson Theorem, as $\inte(\check{\Gamma})\cup\check{\Gamma}$ is a compact set, neither $\omega(\check{\phi})\not\subset\check{U}$ nor $\alpha(\check{\phi})\not\subset\check{U}$, because there is no singularities nor closed leaves contained in $\check{U}$. Therefore \[\omega(\check{\phi})\subset\inte(\check{U})\quad\mbox{and}\quad \alpha(\check{\phi})\subset\exte(\check U).\]
			
			Let  $\check{\phi}'\subset\check U\setminus\{\check\phi\}$ be any other leaf such that $\check{\phi}'\cap\check{\Gamma}=\{z'\}$. So, there exist $t,t'\in\R$ such that $t<t'<t+1$ and $\check\gamma(t)=z$ and $\check\gamma(t')=z'$, where $\check\gamma$ is the natural lift of $\check\Gamma$. 
			By transversality of the path $\check{\Gamma}$ we note that for every $s\in[t,t']$, if $\phi^{+}_{\check\gamma(s)}$ is contained in $\inte(\check\Gamma)$, then the same holds for every $s'$ sufficiently close to $s$. Since $[t,t']$ is a compact interval, we get that $\check{\phi}'^+_{z'}\subset\inte(\check{\Gamma})$. Which implies, in an analogous way, that case (1) holds for leaf $\phi'$. 
			
			Therefore we conclude that if $\check{\phi}^+_z\subset\inte(\check{\Gamma})$ for some leaf $\check\phi\subset\check U$ then case (1) holds for all other leaves contained in $\check U$. 
			
			The case (2) is shown in an analogous way, assuming that $\check{\phi}^-_z\subset\inte(\check{\Gamma})$ and $\check{\phi}^+_z\subset\exte(\check{\Gamma})$ happens.	 
	\end{proof}}
	
	\begin{proposition}\label{prop:minha1}
		Let $\Gamma:\T^1\to\T^2$ be an $\F$-transverse loop homotopic to zero on $\T^2$ and $\gamma:\R\to\T^2$ its natural lift. Suppose $\Gamma$ has no $\F$-transverse self-intersection. Let $\check{\Gamma}:\T^1\to\R^2$ be a lift of the loop $\Gamma$ to $\R^2$  and  $\check{\gamma}:\R\to\R^2$ be the natural lift of $\check\Gamma$. We have the following 
		\begin{enumerate}
			\item  $\check\Gamma$ is an $\check\F$-transverse loop on $\R^2$;
			\item There exists an $\check\F$-transverse simple loop $\check{\Gamma}'$ such that  its natural lift $\check{\gamma}':\R\to\R^2$ is equivalent to $\check\gamma$;
			\item $\check{U}=\bigcup_{t\in\R}\phi_{\check{\gamma}(t)}$ is a topological open annulus in $\R^2$;
			\item $\check{U}\cap\check{U}+(p_1,p_2)=\emptyset$ for every $(p_1,p_2)\in\Z^2\backslash\{(0,0)\}$.
		\end{enumerate}
	\end{proposition}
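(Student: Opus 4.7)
My plan is to address the four items in sequence, the first three reducing to machinery already assembled in the paper and the fourth requiring new geometric input.

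For \textbf{(1)}, since $\Gamma$ is null-homotopic in $\T^2$, the monodromy of the covering $\check\pi$ along $\gamma$ is trivial, so the lift $\check\gamma$ satisfies $\check\gamma(t+1)=\check\gamma(t)$ for every $t$; hence $\check\gamma$ is the natural lift of the loop $\check\Gamma$. Its $\check\F$-transversality follows at once from the $\F$-transversality of $\Gamma$, because Definition \ref{def:transverse} is local and $\check\pi$ is a local homeomorphism sending $\check\F$ to $\F$ with preserved orientation. For \textbf{(2)}, I would apply Proposition \ref{prop:proposition2} in the one-point compactification $\S^2=\R^2\cup\{\infty\}$, extending $\check\F$ by a singularity at $\infty$: the path $\check\gamma$ is $\check\F$-bi-recurrent as the natural lift of a loop, and it has no $\check\F$-transverse self-intersection, since such an intersection would project through $\check\pi$ to an $\F$-transverse self-intersection of $\gamma$. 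Proposition \ref{prop:proposition2}(2) then yields a simple $\check\F$-transverse loop $\check\Gamma'$ whose natural lift $\check\gamma'$ is equivalent to $\check\gamma$. Item \textbf{(3)} is then immediate from Proposition \ref{prop:proposition2}(3), noting that $\check\gamma$ does not pass through $\infty$ so the annulus lies in $\R^2$.

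For \textbf{(4)} I would argue by contradiction. Assume $\check z\in\check U\cap(\check U+v)$ for some $v\in\Z^2\setminus\{(0,0)\}$. The leaf $\check\phi$ of $\check\F$ through $\check z$ lies in $\check U$; since $\check z-v\in\check U$, the leaf $\check\psi$ through $\check z-v$ lies in $\check U$ as well, and uniqueness of leaves at $\check z$ forces $\check\psi+v=\check\phi$, so $\check\phi-v\subset\check U$. The two leaves must be distinct: otherwise $\check\phi$ would be invariant under a nonzero integer translation, forcing $\omega(\check\phi)$ to be unbounded and contradicting the compactness of $\inte(\check U)$ given by Lemma \ref{lema:conjlimite} (after choosing without loss of generality case (1) of that lemma). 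Consequently $\omega(\check\phi)$ is a nonempty compact subset of $\inte(\check U)\cap(\inte(\check U)+v)$. I would now use that the simple loop $\check\Gamma'$ from (2) bounds a Jordan disk $D\subset\R^2$ containing $\inte(\check U)$, and compare $D$ with its translate $D+v$: if $\check\Gamma'\cap(\check\Gamma'+v)=\emptyset$, then translation invariance rules out nesting of the two bounded disks, forcing $D\cap(D+v)=\emptyset$ and hence $\inte(\check U)\cap(\inte(\check U)+v)=\emptyset$, contradicting the conclusion on $\omega(\check\phi)$; if instead $\check\Gamma'\cap(\check\Gamma'+v)\neq\emptyset$, then Remark \ref{prop:1} (each leaf of $\check U$ meets $\check\Gamma'$ exactly once) combined with the annulus structure of both $\check U$ and $\check U+v$ should allow concatenating subarcs of $\check\gamma'$ and $\check\gamma'+v$ through an intersection point to build an $\check\F$-transverse self-intersection of $\check\gamma'$, contradicting (2).

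The hardest part is this second case of (4), when $\check\Gamma'$ meets $\check\Gamma'+v$: turning the heuristic concatenation into a bona fide $\check\F$-transverse self-intersection requires a careful analysis of how the leaves of $\check\F$ thread through both simple loops, probably by applying Lemma \ref{lema:conjlimite} to the two annuli simultaneously in order to pin down the relative positions of their interiors and exteriors before invoking the $\check\F$-transverse intersection criterion from Definition \ref{def:Fintersection}.
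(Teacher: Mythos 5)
Items (1)--(3) of your proposal and the first half of your argument for (4) -- producing a whole leaf contained in $\check U\cap(\check U+v)$ and using Lemma \ref{lema:conjlimite} to place its $\omega$-limit inside $\inte(\check U)\cap\bigl(\inte(\check U)+v\bigr)$ -- coincide with the paper's proof. The genuine gap is your second case of (4), where $\check\Gamma'$ meets $\check\Gamma'+v$. Your plan is to promote this topological crossing to an $\check\F$-transverse self-intersection, but that implication is false: transverse loops (even simple ones, each meeting every leaf at most once) can intersect topologically without having any $\F$-transverse intersection -- Definition \ref{def:Fintersection} requires a genuine branching of the lifted leaf space (the two ``mutually on the left'' and ``mutually on the right'' conditions), and for instance two essential transverse simple loops in an annulus foliated without singularities by arcs joining its ends can cross while no such configuration exists. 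This is precisely why Proposition \ref{prop:proposition1} is needed in the theory: it does not say that loops with no $\F$-transverse intersection are disjoint, only that they admit disjoint equivalent representatives. Worse, in your situation you know \emph{a priori} that $\check\gamma'$ and $\check\gamma'+v$ have no $\check\F$-transverse intersection: $\check\Gamma'$ is equivalent to $\check\Gamma$ and $\check\Gamma'+v$ to $\check\Gamma+v$, transverse intersection is an invariant of equivalence classes, and an $\check\F$-transverse intersection of $\check\Gamma$ with $\check\Gamma+v$ would project to an $\F$-transverse self-intersection of $\gamma$, which is excluded by hypothesis. So no amount of care in the concatenation can yield the contradiction you are after; a topological crossing of $\check\Gamma'$ with $\check\Gamma'+v$ is compatible with all your hypotheses, and excluding it is essentially the content of item (4) itself.

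The paper closes exactly this gap with Proposition \ref{prop:proposition1}: since $\check\Gamma$ and $\check\Gamma+v$ have no $\check\F$-transverse intersection and no $\check\F$-transverse self-intersections, there exist \emph{disjoint} simple loops $\check\Gamma'$, $\check\Gamma''$ equivalent to them, with $\inte(\check U)\subset\inte(\check\Gamma')$ and $\inte(\check U)+v\subset\inte(\check\Gamma'')$. The price is that $\check\Gamma''$ need not be $\check\Gamma'+v$, so your translation trick (``disjoint translates cannot be nested, hence the disks are disjoint'') no longer applies; since both interiors contain $\omega(\check\phi_0)$, the disjoint curves must in fact be nested, say $\check\Gamma'\subset\inte(\check\Gamma'')$. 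The paper then transfers this nesting back to translate-related sets: every leaf or singularity lying in $\inte(\check U)$ is contained in $\inte(\check\Gamma'')$, cannot lie in the foliated annulus $\check U+v$ (its leaves all cross $\check\Gamma''$), and hence lies in $\inte(\check U)+v$; thus $\inte(\check U)\subset\inte(\check U)+v$, contradicting the fact that $\inte(\check U)$ is a nonempty compact set and $v\neq(0,0)$. Your case-1 computation is correct but only covers the lucky situation in which the simple representative and its translate happen to be disjoint; to complete the proof, replace your dichotomy by this reduction via Proposition \ref{prop:proposition1} and the compactness argument on $\inte(\check U)$.
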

	
	\begin{proof}
		Let $\Gamma:\T^1\to\T^2$, $\gamma:\R\to\T^2$, and $\check{\Gamma}:\T^1\to\R^2$ be the paths given in the statement. As $\Gamma$ is an $\F$-transverse loop homotopic to zero, since the lift to $\R^2$ of loops homotopic to zero are also loops, and the lift of $\F$-transverse paths to $\R^2$ are $\check\F$-transverse paths then, lifting such homotopy, we get that $\check{\Gamma}$ is an $\check\F$-transverse loop {on $\R^2$, proving (1)}.
		
		By assumption, we have that  $\Gamma$ has no $\F$-transverse self-intersection which implies its lift $\check{\Gamma}$ also has no $\check{\F}$-transverse self-intersection. 	Therefore the properties $(2)$ and $(3)$ follow from Proposition \ref{prop:proposition2}.
		
		{	Suppose, for contradiction, that $(4)$ does not hold then let $(p_1,p_2)\in\Z^2\backslash(0,0)$ be some vector such that $\check{U}\cap\check{U}+(p_1,p_2)\neq\emptyset$. As $\check\Gamma\subset\check U$ is a loop that lifts $\Gamma$ to $\R^2$, the loop $\check\Gamma+(p_1,p_2)\subset\check U+(p_1,p_2)$ is also a lift of $\Gamma$ to $\R^2$.}
		
		By assumption, the loop $\Gamma$ has no $\F$-transverse self-intersection. So, in addition to any of the loops $\check\Gamma$ and $\check\Gamma+(p_1,p_2)$ not having an $\check\F$-transverse self-intersection, we also have that $\check{\Gamma}$ and $\check{\Gamma}+(p_1,p_2)$ have no $\check\F$-transverse intersection. Because otherwise there would be $s<t$ in $\R$ such that their natural lifts $\check{\gamma}$ and $\check{\gamma}+(p_1,p_2)$ have an $\check\F$-transverse intersection {at} $\check{\gamma}(s)=\check{\gamma}(t)+(p_1,p_2)$ which would imply that $$\gamma(s)=\check\pi(\check{\gamma}(s))=\check\pi(\check{\gamma}(t)+(p_1,p_2))=\check\pi(\check{\gamma}(t))=\gamma(t)$$ and $\gamma$ would have an $\F$-transverse self-intersection in $\gamma(s)=\gamma(t)$, which is a contradiction with the assumption.
		
		Thus, with the Remark \ref{prop:1} and Proposition \ref{prop:proposition1}, we have that there exist $\check\F$-transverse simple loops $\check{\Gamma}'$ and $\check{\Gamma}''$ that are disjoint and $\check\F$-equivalent to $\check{\Gamma}$ and $\check{\Gamma}+(p_1,p_2)$, respectively. Which means that $\check{\Gamma}'\subset\check{U}$ and $\check{\Gamma}''\subset\check{U}+(p_1,p_2)$ and moreover   \[\inte(\check{U})\subset\inte(\check{\Gamma}')\;\mbox{and}\;\inte(\check{U}+(p_1,p_2))\subset\inte(\check{\Gamma}'')\] where $\inte(\check{U}+(p_1,p_2))=\inte(\check{U})+(p_1,p_2)$.

		As $\check{U}\cap\check{U}+(p_1,p_2)\neq\emptyset$ and $\check{U}$ is a foliated set, there exists a leaf  $\check{\phi}_0\subset\check{\F}$ satisfying
		
		{\begin{equation}\label{eq:phi0}
				\check{\phi}_0\subset\check{U}\cap\check{U}+(p_1,p_2). 
		\end{equation}	}

		By  Lemma \ref{lema:conjlimite} we know that for all $\check \phi\subset\check U$ either $\omega(\check{\phi})\neq\emptyset$, $\omega(\check{\phi})\subset\inte(\check{U})$ and $\alpha(\check{\phi})\subset\exte(\check{U})$ or
		$\alpha(\check{\phi})\neq\emptyset$,  $\alpha(\check{\phi})\subset\inte(\check{U})$ and $\omega(\check{\phi})\subset\exte(\check{U})$.
		
		Let us suppose that \[\omega(\check{\phi}_0)\neq\emptyset,\; \omega(\check{\phi}_0)\subset\inte(\check{U})\;\;\mbox{and}\;\;\alpha(\check{\phi}_0)\subset\exte(\check{U}).\] The other case is analogous. 
		
		By property (\ref{eq:phi0}) follows that $\check{\phi}_0-(p_1,p_2)\subset\check{U}$,
		so {by Lemma \ref{lema:conjlimite}} we must have $\omega(\check{\phi}_0-(p_1,p_2))\subset \inte(\check{U})$ which implies \[\omega(\check{\phi}_0)\subset\inte(\check{U})+(p_1,p_2)=\inte(\check{U}+(p_1,p_2)).\]

		Therefore \[\omega(\check{\phi}_0)\subset\inte(\check{\Gamma}')\;\;\mbox{and}\;\;\omega(\check{\phi}_0)\subset\inte(\check{\Gamma}'').\] 
		
		From this and the assumption that $\check{\Gamma}'$ and $\check{\Gamma}''$ are disjoint follows that $$\check{\Gamma}'\subset\inte(\check{\Gamma}'')\;\;\mbox{or}\;\;\check{\Gamma}''\subset\inte(\check{\Gamma}').$$

		Let us suppose $\check{\Gamma}'\subset\inte(\check{\Gamma}'')$, so {\begin{equation}\label{eq:intUintG}
				\inte(\check{U})\subset\inte(\check{\Gamma}'').
		\end{equation} }
		
		But this is a contradiction. Let us explain why.		
		
		The set $\inte(\check{U})$ is a union of leaves and singularities. Thus, by property (\ref{eq:intUintG}), \begin{itemize}
			\item if $\check{\phi}'\subset\check\dom(\F)\cap\inte(\check{U})$ then $\check{\phi}'\subset\inte(\Gamma'')$ and, therefore $\check{\phi}'\cap\check{\Gamma}''=\emptyset$. So $\check{\phi}'$ is not contained in $\check{U}+(p_1,p_2)$, which imply 
			$\check\phi'\subset\inte(\check{U}+(p_1,p_2))$. 
			\item if  $w\in\sing(\check\F)\cap\inte(\check{U})$ then $w\in\inte(\check{\Gamma}'')$ and as $w\notin\check{U}+(p_1,p_2)$, because $\check{U}+(p_1,p_2)$ is a foliated set, we have  $w\in\inte(\check{U}+(p_1,p_2))$.
		\end{itemize}
		
		So $\inte(\check{U})\subset\inte(\check{U}+(p_1,p_2))=\inte(\check{U})+(p_1,p_2)$ and this is a contradiction, because $(p_1,p_2)$ is a non-zero vector on $\Z^2$ and $\inte(\check{U})$ is a non-empty compact subset of $\R^2$.
		
		{If} $\check{\Gamma}''\subset\inte(\check{\Gamma}')$ then, similarly, we can show that  $\inte(\check{U})+(p_1,p_2)\subset\inte(\check{U})$, which is a contradiction again.
		
		Therefore, we must have $\check{U}\cap\check{U}+(p_1,p_2)=\emptyset$, meaning that $\check U$ is an inessential set of $\T^2$.
	\end{proof}
	
	\noindent\textit{Continuation of the proof of Theorem A.}\hspace{.1cm} Let's remember, $f:\T^2\to\T^2$ is a transitive homeomorphism where $g=f^q$ is isotopic to identity for some $q\geq 1$, $z_0\in\fix(g)$, $z_1\in\fix(g^{q_1})\setminus\fix(g)$ with $q_1>1$, $z\in\T^2$ is the point such that $\omega_f(z)=\alpha_f(z)=\T^2$, $I$ is a maximal isotopy for $g$ such that $z_0\in\fix(I)$ and $\check{I}=(\check{g}_t)_{t\in[0,1]}$ is an identity isotopy that lifts $I$ to $\R^2$ such $\check{g}:=\check{g}_1$. And now we are in the case that  $I^{q_1}(z_1)$ is  homotopic to zero on $\T^2$.
	
	We will consider $\F$ as a transverse foliation to $I$ and $\check{\F}$ as the foliation of $\R^2$ that lifts $\F$.  Observe that $\check{\F}$ is transverse to $\check{I}$ and as $\fix(I)\neq\emptyset$ we have that  $\fix(\check{I})=\check\pi^{-1}(\fix(I))$ is non-empty, then $\check{\F}$ is a singular oriented foliation of $\R^2$. 
	
	Furthermore, we will suppose that any $\F$-transverse path	{has} no $\F$-transverse self-intersection, because if some  $\F$-transverse path does then Theorem \ref{teo:horseshoe} would imply that $g$ has a topological horseshoe and, consequently, $f$ would have a topological horseshoe, finishing the proof. Thus any lift to $\R^2$ of any $\F$-transverse path has no $\check\F$-transverse self-intersection also.
	
	{Let $\gamma_1:=I^{\Z}_{\F}(z_1)$ be a whole $\F$-transverse trajectory of the point $z_1$ such that lifts $\Gamma_1:=I^{q_1}_{\F}(z_1)$ a $\F$-transverse loop homotopic to $I^{q_1}(z_1)$.} 
	
	As $\Gamma_1$ is homotopic to $I^{q_1}(z_1)$ on $\dom(I)$, then it is an $\F$-transverse loop that is itself homotopic to zero on $\T^2$. Moreover it has no $\F$-transverse self-intersection, by assumption. 
	So, it follows from Proposition \ref{prop:minha1} that any lift $\check{\Gamma}_1$ of $\Gamma_1$ to $\R^2$ is $\check\F$-equivalent to a multiple of an $\check\F$-transverse simple loop and if $\check{\gamma}_1:\R\to\R^2$ is the natural lift of the loop $\check\Gamma_1$, $\check{U}_1=\bigcup_{t\in\R}\phi_{\check{\gamma}_1(t)}$ is a topological open annulus on $\R^2$ which is disjoint from all integers translates of it. 
	
	With those assumptions, we have the following claims:
	
	\begin{affirmation}\label{af:nonwandering}
		Let $z\in\T^2$ be a point such that $\omega_f(z)=\alpha_f(z)=\T^2$. Given any lift $\check z\in\check\pi^{-1}(z)$ then $\check z$ is non-wandering for $\check g$.
	\end{affirmation}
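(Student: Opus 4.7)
The plan is to argue by contradiction. Suppose $\check z$ is wandering for $\check g$, and pick a small open neighborhood $\check V$ of $\check z$, contained in a fundamental domain of $\check\pi$, such that $\check g^n(\check V)\cap\check V=\emptyset$ for every $n\ge 1$. Projecting to $\T^2$ and using that we already established $z\in\omega_g(z)\cap\alpha_g(z)$, one obtains a sequence of returns $g^{n_k}(z)\to z$, each of which lifts to the condition $\check g^{n_k}(\check z)\in\check V+v_k$ for some $v_k\in\Z^2\setminus\{(0,0)\}$; the wandering hypothesis forces all these integer displacements to be nonzero.

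The strategy is to rule out this scenario by exploiting the rigidity provided by the inessential open annulus $\check U_1$ and the periodic orbit of $\check z_1$. The periodic orbit $\{\check z_1,\check g(\check z_1),\ldots,\check g^{q_1-1}(\check z_1)\}$ lies entirely in $\check U_1$, and by Proposition \ref{prop:minha1} the translates $\check U_1+v$ for $v\in\Z^2$ are pairwise disjoint compact pieces. Transitivity of $z$ forces the orbit of $\check z$ to visit neighborhoods of the periodic orbit infinitely often, and each such visit selects a specific translate $\check U_1+v$.

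The key analytic step is to show that among the integer displacements $v_k$ the value $(0,0)$ must occur, which will immediately produce $\check g^{n_k}(\check V)\cap\check V\ne\emptyset$ and contradict the wandering hypothesis. For this I would combine three ingredients: first, the absence of any $\F$-transverse self-intersection of $I^{\Z}_{\F}(z)$, which follows from our running assumption that $g$ admits no topological horseshoe together with Theorem \ref{teo:horseshoe}; second, Corollary \ref{cor:prop23} together with Remark \ref{remark:scholium} applied to $\check z$, which forces the lifted transverse trajectory $\check I^{\Z}_{\check\F}(\check z)$ either to never meet a leaf twice or to be $\check\F$-equivalent to a simple $\check\F$-transverse loop; and third, Proposition \ref{prop:20} and Corollary \ref{cor:prop20}, which constrain how such a trajectory can draw the simple loop $\check\Gamma'_1$ equivalent to $\check\Gamma_1$, forcing a unique drawing component and forbidding separations of the trajectory that would allow it to wind coherently between distinct translates $\check U_1+v$.

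I expect the main obstacle to be the last step: showing that the integer displacements $v_k$ cannot all be nonzero. This amounts to a careful geometric analysis of how the lifted transverse trajectory of $\check z$ interacts with the disjoint annuli $\check U_1+v$, using that every drawing of $\check\Gamma_1'$ by $\check I^{\Z}_{\check\F}(\check z)$ is confined to a single translate and cannot be crossed without creating an $\check\F$-transverse self-intersection of $\check I^{\Z}_{\check\F}(\check z)$. Once enough returns of $\check z$ are shown to lie in $\check V$ itself rather than in a non-trivial translate $\check V+v$, the contradiction is immediate and the Claim follows.
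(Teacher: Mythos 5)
Your plan reduces the whole Claim to the ``key analytic step'' that some return displacement equals $(0,0)$, i.e.\ that $\check z\in\omega_{\check g}(\check z)$. That is strictly stronger than what the Claim asserts (non-wandering), it is exactly the step you admit you cannot yet carry out, and the tools you cite do not give it. Concretely, the scenario in which every forward return of the orbit of $\check z$ comes with one and the same nonzero displacement $p\in\Z^2$ is compatible with everything you invoke: the absence of $\F$-transverse self-intersections, the uniqueness of the drawing component (Proposition \ref{prop:20}), and the disjointness of the translates of $\check U_1$. From forward data alone you never produce two parameters separated by the drawing component whose images lie in the same connected component of the complement of $\check U_1$, so Corollary \ref{cor:prop20} yields no contradiction and nothing forces a zero displacement. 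Note also that in this scenario $\check z$ is nevertheless non-wandering (it belongs to $\omega_{\check g}(\check z-p)$, and every point of an $\omega$- or $\alpha$-limit set is non-wandering), which shows your intended contradiction is aimed at a statement stronger than needed and possibly unreachable at this stage; the recurrence-type upgrades ($\check z_1,\check z_2\in\omega_{\check g}(\check z)$) only become available later in the paper, after Claim \ref{lema:UdomF} gives the invariance of $\check U_1$. (Two smaller inaccuracies: $\check U_1$ and its translates are open annuli, not ``compact pieces'' --- only $\inte(\check U_1)$ is compact --- and the loop being drawn is the transverse loop $\check\Gamma_1$ coming from $I^{q_1}_{\F}(z_1)$, not the periodic orbit itself.)

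The paper's proof avoids this trap by using \emph{both} directions of recurrence of $z$ for $g$: writing $p_k$ for the integer displacements of forward returns and $v_l$ for those of backward returns, it shows these two sequences cannot both be unbounded. Indeed, since $z_1\in\omega_g(z)$ (after replacing $z_1$ by a suitable iterate), Lemma \ref{lemma:17} makes the lifted transverse trajectory of $\check z$ draw a fixed lift $\check\Gamma_1$ on some interval $[a,b]$, while unboundedness on both sides would force the trajectory to meet, at one time far in the past and one far in the future, translates $\check\phi+p$ and $\check\phi+v$ contained in $\exte(\check U_1)$; these two times are separated by the drawing component and their images lie in the same complementary component, contradicting Corollary \ref{cor:prop20}. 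Hence at least one displacement sequence is bounded, the Pigeonhole Principle extracts a constant value $p$ (or $v$), so $\check z\in\omega_{\check g}(\check z-p)$ or $\check z\in\alpha_{\check g}(\check z-v)$, and non-wandering follows from the standard fact about limit points --- with no need for the displacement to vanish. To repair your proposal, either adopt this two-sided argument, or supply a genuine proof of the much stronger claim that $(0,0)$ occurs among the displacements, which your cited ingredients do not provide.
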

	\begin{proof}

		In this case, we know that $z\in\omega_g(z)\cap\alpha_g(z)$. So there are increasing sequences of integers $(m_k)_{k\in\N}$ and $(n_l)_{l\in\N}$ such that $m_k\nearrow+\infty$ and $n_l\nearrow+\infty$ and $g^{m_k}(z)\to z\;\;\mbox{and}\;\; g^{-n_l}(z)\to z.$ That is, we can assume, eventually by taking subsequences, that 
		\[\begin{array}{l}
			d_{\T^2}(g^{m_k}(z),z)<1/k,\;\;\forall k\in\N \quad\mbox{and}\vspace{.2cm}\\
			d_{\T^2}(g^{-n_l}(z),z)<1/l,\;\;\forall l\in\N
		\end{array}\]
		
		Let $\gamma:=I^{\Z}_{\F}(z)$ and $\phi\in\F$ such that $z\in\phi$. By Lemma \ref{lemma:17} we know that are real numbers {$t_k\nearrow+\infty$ and $t'_l\searrow-\infty$ such that for all $k,l\in\N$ 
			
			\begin{equation}\label{eq:ti_tj_phi}
				\begin{array}{l}
					d( \gamma(t_k), g^{m_k}(z))<1/k \,,\vspace{.2cm}\\
					d( \gamma(t'_l), g^{-n_l}(z))<1/l\;\;\mbox{and}\vspace{.2cm}\\
					\gamma(t_k),\,\gamma(t'_l)\in\phi.
				\end{array}
		\end{equation}}
		
		Now, fix arbitrarily $\check z\in\check\pi^{-1}(z)$. So, there are two sequences $(p_k)_{k\in\N},(v_l)_{l\in\N}\in\Z^2$  such that \[\begin{array}{l}
			d(\check g^{m_k}(\check z),\check z+p_k)<1/k,\;\;\forall k\in\N \quad\mbox{and}\vspace{.2cm}\\
			d(\check g^{-n_l}(\check z),\check z+v_l)<1/l,\;\;\forall l\in\N.
		\end{array}\]
		
		{We will assume there are subsequences \[(p_{k_i})_{i\in\N}\subset(p_k)_{k\in\N}\;\;\mbox{and}\;\; (v_{l_j})_{j\in\N}\subset(v_l)_{l\in\N}\] such that 
			\begin{equation}\label{eq:finito}
				||p_{k_i}||\to+\infty\;\;\mbox{and}\;\;||v_{l_j}||\to+\infty, 
			\end{equation}
			and argue to a contradiction.}
		
		So, suppose 
		\begin{equation}\label{eq:wandering}
			\begin{array}{l}
				d(\check g^{m_{k_i}}(\check z),\check z+p_{k_i})<1/k_i,\;\forall i\in\N \vspace{.2cm}\\
				d(\check g^{-n_{l_j}}(\check z),\check z+v_{l_j})<1/l_j,\;\forall j\in\N 
			\end{array}
		\end{equation}
		{and let $\check\gamma:=\check I^{\Z}_{\F}(\check z)$ and $\check \phi\subset\check\F$ such that $\check z\in\check\phi$.} Thus we can assume, eventually by taking subsequences, the property in (\ref{eq:ti_tj_phi}) 
		and  the inequalities in (\ref{eq:wandering}) 
		imply for the real numbers {$t_{k_i}\nearrow+\infty$ and $t'_{l_j}\searrow-\infty$} that  \[\check\gamma(t_{k_i})\in\check\phi+p_{k_i}\;\mbox{and}\;\check\gamma(t'_{l_j})\in\check\phi+v_{l_j}.\] 
		
		These consequences together with the properties in (\ref{eq:finito}) means the whole $\check \F$-transverse trajectory of $\check z$ meets, arbitrarily in the past and in the future, integers translates of $\check \phi$ more and more distant of $\check\phi$.

		Moreover, we can assume that $z_1\in\omega_g(z)$, because if it is not then by Lemma \ref{lemma:transitive} we would have some $r\in\{0,1,\cdots,q-1\}$ such that $z_1\in\omega_g(f^r(z))$. The last one imply that $f^{-r}(z_1)\in\omega_g(z)$ and instead of taking $z_1$ we would take its iterate $f^{-r}(z_1)$ that belongs to $\omega_g(z)$. 
		
		So, supposing that $z_1\in\omega_g(z)$, Lemma \ref{lemma:17} say there are $a<b$ and $t$ real numbers that \[\gamma|_{[a,b]}\;\;\mbox{is $\F$-equivalent to}\;\;\gamma_1|_{[t,t+1]}.\] 
		
		So, there is a lift $\check\gamma_1$ of $\gamma_1$ such that 
		\begin{equation}\label{eq:lift-draws}
			\check\gamma|_{[a,b]}\;\;\mbox{is $\check\F$-equivalent to}\;\;\check\gamma_1|_{[t,t+1]}, 
		\end{equation} 
		
		By the discussion, above this claim, we know that $\check\gamma_1$ is the natural lift of an $\check\F$-transverse simple loop $\check\Gamma_1$ (up to equivalence) and $\check U_1=\bigcup_{t\in\R}\phi_{\check\gamma_1(t)}$ is a topological open annulus in $\R^2$ which is disjoint from all integers translates of it.
		
		Thus, the union $\check U_1\cup\inte(\check U_1)$ contains $\check\phi$, the leaf that contains the point $\check z$ and no other integer translate of it. 
		
		So, there must be {$p_{1}\in(p_{k_i})$, $v_{2}\in(v_{l_j})$} integer vectors, such that {$||p_{1}||$ and $||v_{2}||$} are large enough so that {$\check\phi+p_{1}$ and $\check\phi+v_{2}$} are in $\exte(\check U_1)$. Moreover, there are {$\bar{t}\in(t_{k_i})$, $\bar{t'}\in(t'_{l_j})$ such that $\bar{t}<a<b<\bar{t'}$} and 
		\begin{equation}\label{eq:recorrente}
			\check\gamma(\bar{t})\in\check\phi+p_{1}\;\;\mbox{and}\;\;\check\gamma(\bar{t'})\in\check\phi+v_{2}. 
		\end{equation}
		
		Property (\ref{eq:lift-draws}) means that the path $\check{\gamma}$ draws the loop $\check\Gamma_1$,  which means that given $J^{\check{\gamma}}_{\check{\Gamma}_1}=\{t\in\R\;|\;\check{\gamma}(t)\in\check{U}_1\}$, there exists a drawing component $J\subset J^{\check{\gamma}}_{\check{\Gamma}_1}$ that contains $[a,b]$ and such that $\check{\gamma}|_J$ draws $\check{\Gamma}_1$.
		
		 Observe that {$\bar{t},\bar{t'}\not\in J^{\check{\gamma}}_{\check{\Gamma}_1}$} and they are separated by $[a,b]$, so $J$ must also separates them. Moreover, {$\check\gamma(\bar{t})$ and $\check\gamma(\bar{t'})$} are both in $\exte(\check U_1)$, by property (\ref{eq:recorrente}) and $\check\gamma$ has no $\check\F$-transverse intersection, by assumption, which is a contradiction with Corollary \ref{cor:prop20}.
		
		Thus, we conclude that property (\ref{eq:finito}) does not hold. So there must exist a constant real number $K$ or a constant real number $L$ such that \[||p_{k}||<K\;\;\mbox{or}\;\;||v_{l}||<L,\;\forall k,l\in\N.\] 
		
		Then, by the Pigeonhole Principle, either there is a subsequence $(p_{k_i})_{i\in\N}$ of $(p_k)_{k\in\N}$ such that $p_{k_i}=p\in\Z^2$ for all $i\in\N$ or there is a subsequence $(v_{l_j})_{j\in\N}$ of $(v_l)_{l\in\N}$ such that $v_{l_j}=v\in\Z^2$ for all $j\in\N$, so
		
		\[\begin{array}{l}
			d(\check g^{m_{k_i}}(\check z),\check z+p)<1/k,\;\;\forall i\in\N\;\;\mbox{or} \vspace{.2cm}\\
			d(\check g^{-n_{l_j}}(\check z),\check z+v)<1/l,\;\;\forall j\in\N
		\end{array}\] which implies that $\check z+p\in\omega_{\check g}(\check z)$ or $\check z+v\in\alpha_{\check g}(\check z)$ that is equivalent to $\check z\in\omega_{\check g}(\check z-p)$ or $\check z\in\alpha_{\check g}(\check z-v)$, because $\check g$ is a homeomorphism that commutes with the integer vector translations. Therefore, $\check z$ is  non-wandering for $\check g$.

		As $\check z\in\check\pi^{-1}(z)$ was taken arbitrarily, we conclude that any lift $\check z\in\check\pi^{-1}(z)$ is non-wandering.
	\end{proof}

	\begin{affirmation}\label{lema:UdomF}
		If $U_1\subset\T^2$ is the set of leaves crossed by the loop $\Gamma_1$ then 
		$U_1=\dom(\F)$.
	\end{affirmation}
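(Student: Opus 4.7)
The plan is to argue by contradiction. Suppose $V := \dom(\F) \setminus U_1 \neq \emptyset$ and pick a leaf $\phi_0 \subset V$ together with a point $w_0 \in \phi_0$. The aim is to exhibit times $t_1 < a < b < t_2$ along the lifted transverse trajectory $\check\gamma := \check{I}^{\Z}_{\check\F}(\check z)$ such that $(a,b)$ is the drawing component of $\check\gamma$ relative to some lift $\check\Gamma_1$ of $\Gamma_1$, while both $\check\gamma(t_1)$ and $\check\gamma(t_2)$ lie in the unbounded connected component $\exte(\check U_1)$ of $\R^2 \setminus \check U_1$; this will contradict Corollary \ref{cor:prop20} applied in $\R^2$ via one-point compactification (as in the final remark of Section \ref{sec:forcing}).

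To set up the drawing, Lemma \ref{lemma:transitive} applied to $f$ and $z$ produces some $r_1 \in \{0,\ldots,q-1\}$ with $z_1 \in \omega_g(f^{r_1}(z))$. After replacing $z$ by $f^{r_1}(z)$, which remains an $f$-bi-recurrent transitive point (so Claim \ref{af:nonwandering} still gives that $\check z$ is non-wandering for $\check g$), I may assume $z_1 \in \omega_g(z)$. Lemma \ref{lemma:17}(2) then yields that $\gamma := I^{\Z}_\F(z)$ draws $\Gamma_1$; lifting to $\R^2$ and invoking Proposition \ref{prop:20}(1), $\check\gamma$ draws a lift $\check\Gamma_1$ with a unique drawing component $J_0 = (a,b)$ satisfying $\check\gamma((a,b)) \subset \check U_1$.

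For the two meetings, note first that any lift of $\phi_0$ is disjoint from every translate $\check U_1 + p$, because $\phi_0 \cap U_1 = \emptyset$ and $\check\pi^{-1}(U_1) = \bigsqcup_{p \in \Z^2}(\check U_1 + p)$ by Proposition \ref{prop:minha1}. Since $\inte(\check U_1)$ is bounded, I can choose a lift $\check\phi_0 \subset \exte(\check U_1)$ (automatic if the lift is a line, and achievable by an integer translation if it is a closed loop). Applying Lemma \ref{lemma:transitive} to $w_0 \in \T^2 = \bigcup_r \omega_g(f^r(z))$ gives some $r_*$ with $w_0 \in \omega_g(f^{r_*}(z))$, and Lemma \ref{lemma:17}(2) then ensures $\phi_0$ is met by $I^{\N}_{\F}(f^{r_*}(z))$. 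Using Claim \ref{af:nonwandering}, the bi-recurrence $z \in \omega_g(z) \cap \alpha_g(z)$, and the local equivalence in Lemma \ref{lemma:17}(1), I transport this meeting back into $\check\gamma$ and exhibit $t_1 < a$ and $t_2 > b$ such that $\check\gamma(t_1)$ and $\check\gamma(t_2)$ each lie on some $\Z^2$-translate of $\check\phi_0$, both in $\exte(\check U_1)$.

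The main obstacle is precisely this transportation step when $r_* \neq 0$, so that $w_0 \notin \omega_g(z)$ directly: the meeting with $\phi_0$ must be moved across $f$-iterates into the trajectory $\check\gamma$ itself and duplicated on both sides of $(a,b)$. Claim \ref{af:nonwandering} forces $\check g^{m_k}(\check z)$ and $\check g^{-n_k}(\check z)$ to approach $\Z^2$-translates $\check z + p_+$ and $\check z + p_-$, and Lemma \ref{lemma:17}(1) then matches the local trajectories up to these translations; this is harmless because $\Z^2$-translates of $\check\phi_0$ remain in $\exte(\check U_1)$ once chosen sufficiently far.
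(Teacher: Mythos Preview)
Your route differs from the paper's, which is much shorter. The paper observes that since $\check z$ is non-wandering for $\check g$ (Claim \ref{af:nonwandering}), $\check\gamma := \check I^{\Z}_{\check\F}(\check z)$ has no $\check\F$-transverse self-intersection, and $\check\gamma$ draws $\check\Gamma_1$ (hence meets some leaf twice), the dichotomy of Corollary \ref{cor:prop23} together with Remark \ref{remark:scholium} immediately forces $\check\gamma$ to be $\check\F$-equivalent to the natural lift of a simple loop; this loop is $\check\Gamma_1$, so $\check\gamma \subset \check U_1$. Projecting, $\gamma \subset U_1$, and since the whole transverse trajectory of a point with full $\omega$- and $\alpha$-limit meets every leaf of $\dom(\F)$, one concludes $U_1 = \dom(\F)$.

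Your plan instead aims to contradict Corollary \ref{cor:prop20} directly, by producing $t_1 < a < b < t_2$ with both $\check\gamma(t_i) \in \exte(\check U_1)$. This amounts to reproving the relevant case of Corollary \ref{cor:prop23} by hand, and the step you yourself flag as the ``main obstacle'' is a genuine gap. You need $\gamma = I^{\Z}_{\F}(z)$ itself to meet the leaf $\phi_0$, both far in the future and far in the past. But Lemma \ref{lemma:transitive} only places $w_0$ in $\omega_g(f^{r_*}(z))$ for some $r_*$, which controls the \emph{different} transverse trajectory $I^{\Z}_{\F}(f^{r_*}(z))$, not $\gamma$. The ``transportation'' you propose, via the non-wandering recurrence of $\check z$ and Lemma \ref{lemma:17}(1), does not bridge these two trajectories: $\check g$-recurrence of $\check z$ only relates $\check\gamma$ to its own $\Z^2$-translates, and Lemma \ref{lemma:17}(1) compares trajectories of \emph{nearby} points, not of $z$ and $f^{r_*}(z)$. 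Nor can you re-choose $z$ a second time to force $w_0 \in \omega_g(z)$ without risking the loss of $z_1 \in \omega_g(z)$, and with it the drawing of $\check\Gamma_1$ on which the whole argument rests. The direct appeal to Corollary \ref{cor:prop23}, as in the paper, sidesteps this difficulty entirely.
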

	\begin{proof}
		
		Let $z\in\T^2$ be the point such that $\omega_f(z)=\alpha_f(z)=\T^2$. 
		
		By the above claim we know that any lift $\check z\in\check\pi^{-1}(z)$ of the point $z\in\T^2$ is non-wandering for $\check g$. Moreover, follows from its proof that there is a lift $\check\Gamma_1$ of the loop $\Gamma_1$ such that $\check\gamma:=\check I^{\Z}_{\F}(\check z)$ draws $\check\Gamma_1$. As, by assumption, $\check\gamma$  has no $\check\F$-transverse self-intersection, Corollary \ref{cor:prop23} and {Remark \ref{remark:scholium}} imply that $\check\gamma$ is contained in $\check U_1$. So, by projection, we have that the whole $\F$-transverse trajectory of $z$, $\gamma:=I^{\Z}_{\F}(z)$ is contained in $U_1$.
		
		But we know that a whole $\F$-transverse trajectory of a point whose $\omega,\alpha$-limits sets are the whole surface meets all leafs of $\dom(\F)$.
		
		So, we must have that $U_1=\dom(\F)$.
	\end{proof}
	
	This statement implies that the frontier $\partial U_1$ of $U_1$ is contained in the singularity set $\sing(\F)$. But more than that, we have $\partial U_1=\sing(\F)$ because the whole orbit of $z$ is contained in $U_1$ and also the orbit of $z$ is dense in $\T^2$.
	
	Moreover, Proposition \ref{prop:minha1} {says} that any lift $\check{U}_1\in\check\pi^{-1}(U_1)$ is disjoint from any of its integer translations and so $U_1=\dom(\F)$ is an inessential set and $\partial U_1=\sing(\F)$ is a {fully} essential set.
	
	Let us fix, arbitrarily, {$\check U_1\subset\check\pi^{-1}(U_1)$}, { so $\partial\check{U}_1\subset\sing(\check{\F})$, which implies that $\check U_1$ is a $\check g$-invariant set. Moreover, follows from  the proof of Lemma \ref{lema:conjlimite} that there exists at least one singularity of the foliation $\check{\F}$ that belongs to $\inte(\check{U}_1)$, we will denote it by $\check{z}'$.} 
	
	Let us consider the annulus $A=\R^2\backslash\{\check{z}'\}$.	As $\check z'\in\sing(\check \F)$,  the restriction   $\check g_A:=\check g|_{A}$ is well defined and it is a homeomorphism isotopic to identity, where $\check I_A:=\check I|_{A}$ and $\check\F_A:=\check \F|_{A}$ are a maximal identity isotopy for $\check g_A$ and a transverse foliation to $\check I_{A}$ of $A$, respectively, such that $\sing(\check\F_A)=\sing(\check\F)\setminus\left\{{\check z'}\right\}$. 
	
	Let {$\check\Gamma_1\subset\check\pi^{-1}(\Gamma_1)\cap\check U_1$}. As $\check{z}'\in\inte(\check{U}_1)\subset\inte(\check{\Gamma}_1)$, we have that $\check{\Gamma}_1$ is an essential loop in $A$. Thus $\check{z}_1\in\Ne^+(\check{g}_A)$ and  $\rot(\check{z}_1,\check{g}_A)$ is a non-zero rational number.
	
	Furthermore, as $\sing(\check{\F})$ is the lift of a totally essential set of $\T^2$ we have that there exists $\check{z}_2\in\sing(\check\F_A)\cap\partial\check{U}_1$, so  $\check{z}_2\in\Ne^+(\check{g}_A)$ and  $\rot(\check{z}_2,\check{g}_A)=0$.
	
	As we explain before, we can assume that $z_1\in\omega_g(z)$. We also can assume that $\check\pi(\check z_2)=z_2\in\omega_g(z)$. Because if it is not, again by Lemma \ref{lemma:transitive} there would be some $r\in\{0,1,\cdots,q-1\}$ such that $f^{-r}(z_2)\in\omega_g(z)$ and as $z_2\in\sing(\F)$ then also its iterate $f^{-r}(z_2)\in\sing(\F)$. So we would take this iterate instead.
	
	{Thus we must have that $\check z_1,\check z_2\in\omega_{\check{g}}(\check z)$. Indeed, if $z_i\not\in\omega_{\check g}(\check z)$, for $i=1,2$, there would be  $p^i_k\in\Z^2\setminus\{(0,0)\}\;(i=1,2)$ such that $d(\check g^{m_k}(\check z),\check z_i+p^i_k)<1/k$, for all $k\in\N$ (eventually by taking a subsequence). But this is impossible, since $\check U_1$ is $\check g_A$-invariant and does not intersect any of its integer vector translate}. So $\check z\in\Ne^+(\check g_A)$.

	{However we have that $\rot(\check{z}_1,\breve{g}_{\breve A})\neq\rot(\check{z}_2,\breve{g}_{\breve A})$, where $\breve g_A$ is the lift of $\check g_{\breve A}$ to the universal covering space $\breve A$ of $A$ given by the isotopy $I_A$.} So, the contrapositive of Theorem \ref{teo:A} implies that $\check{g}_A$ possesses a topological horseshoe $\check{\Delta}$ as defined in the introduction. And, as we have that $\check{g}=\check{g}_A$ on $\R^2\backslash\{\check{z}'\}$ and $\check g(\check z')=z'$ then $\check{\Delta}$ is also a topological horseshoe for $\check{g}$. And, therefore, by Remark \ref{remark:horseshoe}, we have that $\Delta=\check\pi(\check{\Delta})$ is a topological horseshoe for $g$, as defined at the introduction. Finally, as $g=f^q$ then $\Delta$ is also a topological horseshoe for $f$.
	
	\section{Proof of Theorem B}\label{sec:thmB}
	{Before we restate and prove Theorem B, let us introduce some fundamental concepts about homeomorphisms isotopic to Dehn Twist first.}
	
	It is a well known fact that if $f\in\homeo(\T^2)$ is isotopic to a Dehn twist and if $\check{f}\in\homeo(\R^2)$ is a lift of $f$ then there is some $A\in\operatorname{SL}(2,\Z)$ that is conjugate to matrix $\begin{pmatrix}
		1 & m \\ 0 & 1 
	\end{pmatrix}$ where $m\in\Z\backslash\{0,0\}$ and such that for all $(p_1,p_2)\in\Z^2$
	\begin{equation*}
		\check{f}(\check{z}+(p_1,p_2))=\check{f}(\check{z})+A\begin{pmatrix}
			p_1\\p_2
		\end{pmatrix}.
	\end{equation*}
	
	As the proofs that we will presented in this chapter are preserved by conjugation, we will consider only the case where $A$ is in this special {form}. So

	\begin{equation*}
		A=\begin{pmatrix}
			1 & m \\ 0 & 1
		\end{pmatrix},\end{equation*} where $m\in\Z\backslash\{0\}$. We will denote the induced map by $A$ in $\T^2$ with the letter $A$ subscripted: $f_A$.

	As we establish in Notation \ref{not:notacao}, let $\check{\pi}:\R^2\to\T^2$ be the canonical universal covering of $\T^2$ and $\check\tau:\R^2\to\A$ the canonical universal covering of $\A$ and {$\hat{\pi}:\A\to\T^2$} a covering map such that $\hat{\pi}\circ\check\tau=\check{\pi}$.
	
	We will use the definition of vertical rotation set for homeomorphisms of $\T^2$ isotopic to a Dehn twist given in \cite{addas2002} and also in \cite{doeff1997rotation}. 
	\begin{definition}
		Let $f:\T^2\to\T^2$ be a homeomorphism isotopic to a Dehn twist and fix a lift $\hat{f}:\A\to\A$ of $f$ to $\A$. \textit{The vertical rotation set of the lift $\hat{f}$}, namely $\rho_V(\hat{f})$, is defined as the set \[\rho_V(\hat{f})=\bigcap_{k\geq1}\overline{\bigcup_{n\geq k}\left\{\dfrac{\pr_2\left(\hat{f}^n(\hat{z})-\hat{z}\right)}{n}\;:\;\hat{z}\in\A\right\}}.\] 
	\end{definition}
	If there is $\hat z\in\A$ such that the limit \[\lim_{n\to\infty}\frac{\pr_2\left(\hat f^n(\hat z)-\hat z\right)}{n}\;\;\mbox{exists}\] then we will say that $\hat z\in\A$ has a \textit{vertical rotation number} and we will denote it by $\rho_V(\hat z,\hat f)$. Furthermore, observe that the restriction to the second coordinate implies that for all $p\in\Z$, \[\rho_V(\hat{z}+(0,p),\hat f)=\rho_V(\hat{z}, \hat f),\] whenever $\hat{z}\in\A$ has a vertical rotation number. This implies that it only depends of the point $z=\hat{\pi}(\hat{z})\in\T^2$. So, we can use de notation $\rho_V(z,\hat f)$, where $z\in\T^2$, instead.
	
	Regarding the dependence of the lift, the vertical rotation set preserves some good properties, as in the bi-dimensional case:
	{\begin{proposition}\label{prop:vertical-rotationset}
			Let $f\in\homeo(\T^2)$ be isotopic to a Dehn twist and fix a lift $\hat{f}\in\homeo(\A)$ of $f$. Then,  
			\begin{enumerate}
				\item $\rho_V(\hat{f})$ is a non-empty and compact interval of $\R$; 
				\item $\rho_V(\hat{f}^q+(0,p))=q\rho_V(\hat{f})+p$, where $q\in\Z$ and $p\in\Z$.
			\end{enumerate}
	\end{proposition}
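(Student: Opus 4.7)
The plan is to mirror, in the one-dimensional vertical setting, the proof of Proposition \ref{prop:rotationset} (Misiurewicz-Ziemian) combined with a direct computation for (2). The key preliminary observation is that the lift $\hat{f}\in\homeo(\A)$ of a homeomorphism isotopic to a Dehn twist commutes with the integer vertical translations: using that $\check{f}(\check z+(0,1))=\check{f}(\check z)+A(0,1)^T=\check{f}(\check z)+(m,1)$ and that in $\A=\T^1\times\R$ the horizontal coordinate is taken mod $1$, we get
\[
\hat{f}(\hat z+(0,p))=\hat{f}(\hat z)+(0,p),\quad \forall\,p\in\Z,\ \hat z\in\A.
\]
Therefore the vertical displacement $\varphi_V(\hat z):=\pr_2(\hat{f}(\hat z)-\hat z)$ is $\Z$-periodic in the second coordinate and descends to a continuous function on $\T^2$; in particular there exists $c>0$ with $|\varphi_V(\hat z)|\le c$ uniformly on $\A$.

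For item (1), I would first deduce boundedness: by a telescoping sum $|\pr_2(\hat f^n(\hat z)-\hat z)|\le nc$, so
\[
\bigcup_{n\ge k}\left\{\tfrac{\pr_2(\hat f^n(\hat z)-\hat z)}{n}:\hat z\in\A\right\}\subset [-c,c].
\]
Hence $\rho_V(\hat f)$ is a nested intersection of non-empty closed subsets of the compactum $[-c,c]$, and is therefore a non-empty compact subset of $\R$. For the interval property I would argue exactly as in Misiurewicz-Ziemian: define the \emph{measure rotation set}
\[
\rho_V^{mes}(\hat f)=\Big\{\int_{\T^2}\varphi_V\,d\mu:\mu\in\mathcal{M}(f)\Big\}.
\]
Since $\mathcal{M}(f)$ is convex (hence connected) in the weak-$*$ topology and $\mu\mapsto \int \varphi_V\,d\mu$ is continuous and linear, $\rho_V^{mes}(\hat f)$ is a connected subset of $\R$, i.e. an interval. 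I would then show $\rho_V(\hat f)=\rho_V^{mes}(\hat f)$ via a Krylov-Bogolyubov construction: given any $v\in\rho_V(\hat f)$, choose $\hat z_k\in\A$ and $n_k\nearrow+\infty$ with $\pr_2(\hat f^{n_k}(\hat z_k)-\hat z_k)/n_k\to v$; form the empirical measures
\[
\mu_k=\frac{1}{n_k}\sum_{j=0}^{n_k-1}\delta_{f^j(\pr(z_k))}\in\mathcal{M}(\T^2),
\]
extract a weak-$*$ limit $\mu$, which is $f$-invariant, and a Birkhoff-sum telescoping plus $\Z$-periodicity of $\varphi_V$ gives $\int\varphi_V\,d\mu=v$. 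The reverse inclusion is immediate from the Birkhoff ergodic theorem applied to $\varphi_V$. Combining, $\rho_V(\hat f)$ equals the interval $\rho_V^{mes}(\hat f)$.

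For item (2), using the commutation with vertical translations one shows by induction on $n$ that, for $\hat g:=\hat f^q+(0,p)$,
\[
\hat g^n(\hat z)=\hat f^{nq}(\hat z)+(0,np),\quad \forall\,n\in\Z,\ \hat z\in\A,
\]
and hence $\pr_2(\hat g^n(\hat z)-\hat z)/n=\pr_2(\hat f^{nq}(\hat z)-\hat z)/n+p$. If $v\in\rho_V(\hat f)$, writing $n=mq+r$ with $0\le r<q$ and using $|\pr_2(\hat f^n(\hat z)-\hat z)-\pr_2(\hat f^{mq}(\hat z)-\hat z)|\le cr\le cq$ one deduces $qv+p\in\rho_V(\hat g)$; conversely, any limit point of $\pr_2(\hat g^n(\hat z)-\hat z)/n$ is of this form, giving $\rho_V(\hat g)=q\rho_V(\hat f)+p$.

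The main obstacle is the interval part of (1), since it is the only step that is not a direct computation and requires the identification of $\rho_V(\hat f)$ with the measure rotation set; the bounded-displacement setting makes the Krylov-Bogolyubov step routine, but care must be taken that the telescoping error $|\pr_2(\hat f^{n_k}(\hat z_k)-\hat z_k)-\sum_{j=0}^{n_k-1}\varphi_V(f^j(z_k))|$ is uniformly bounded (in fact zero, by telescoping) so that the empirical average converges to $v$. The remaining pieces are bookkeeping.
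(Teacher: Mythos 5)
The paper itself gives no proof of this proposition (it is quoted from the literature, with \cite{doeff1997rotation}, \cite{addas2002}, \cite{addas2005} cited for these properties), so the only question is whether your argument stands on its own. Most of it does: the commutation $\hat f(\hat z+(0,p))=\hat f(\hat z)+(0,p)$, the boundedness of the vertical displacement $\varphi_V$, the non-emptiness and compactness of $\rho_V(\hat f)$ as a nested intersection of non-empty compacta, the inclusion $\rho_V(\hat f)\subset\rho_V^{mes}(\hat f)$ via Krylov--Bogolyubov (the telescoping is indeed exact), and item (2) are all correct routine verifications.

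The gap is in the sentence ``the reverse inclusion is immediate from the Birkhoff ergodic theorem applied to $\varphi_V$.'' Birkhoff only gives $\int\varphi_V\,d\mu\in\rho_V(\hat f)$ for \emph{ergodic} $\mu$. For a non-ergodic invariant measure, $\int\varphi_V\,d\mu$ is (by ergodic decomposition) a convex combination of ergodic rotation numbers, and to conclude that such a combination lies in $\rho_V(\hat f)$ you need to know that $\rho_V(\hat f)$ is convex, i.e.\ an interval --- which is exactly what you are trying to prove; as written the argument is circular, and the interval property (the one genuinely non-trivial point, which you yourself flagged) is not established. A standard way to close it without measures: for each $n$ the set $D_n=\{\pr_2(\hat f^n(\hat z)-\hat z):\hat z\in\A\}$ is a compact interval, being the continuous image of the connected set $\A$ (compactness because $\hat z\mapsto\pr_2(\hat f^n(\hat z)-\hat z)$ descends to $\T^2$); the exact cocycle identity $\pr_2(\hat f^{m+n}(\hat z)-\hat z)=\pr_2(\hat f^{m}(\hat z)-\hat z)+\pr_2(\hat f^{n}(\hat f^m(\hat z))-\hat f^m(\hat z))$ makes $\max D_n$ subadditive and $\min D_n$ superadditive, so by Fekete $\frac1n\max D_n\to\beta$ and $\frac1n\min D_n\to\alpha$, and since each $\frac1n D_n$ is the full interval $[\frac1n\min D_n,\frac1n\max D_n]$, the definition of $\rho_V(\hat f)$ as $\bigcap_k\overline{\bigcup_{n\ge k}\frac1n D_n}$ yields $\rho_V(\hat f)=[\alpha,\beta]$. (Alternatively one can simply invoke \cite{doeff1997rotation}, as the paper does; the identification with $\rho_V^{mes}$, which the paper also uses later, then follows rather than precedes the interval property.)
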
}
	
	For other properties, see \cite{addas2002}, \cite{addas2005}  and \cite{doeff1997rotation}.
	
	As before, we can define the vertical rotation number associated to an invariant measure.
	\begin{definition}\label{def:rho_mu}
		Let $f\in\homeo(\T^2)$ be isotopic to a Dehn twist and fix a lift $\hat{f}\in\homeo(\A)$ of $f$. For an $f$-invariant Borel probability measure $\mu$, the vertical rotation number associated to $\mu$ is defined as \[\rho_V(\mu)=\int_{\T^2}\varphi\operatorname{d}\mu,\] where the vertical displacement function $\varphi:\T^2\to\T^1\times\R$ is given by $\varphi(z)=\pr_2(\hat{f}(\hat{z})-\hat{z})$, where $\hat{z}\in\hat{\pi}^{-1}(z)$.  Note that this definition does not depend on the choice of $\hat z$, only of $z\in\T^2$. 
	\end{definition}
	
	By Birkhoff's Ergodic Theorem we have that if $\mu$ is an $f$-ergodic Borel probability measure such that $\rho_V(\mu)=a$ then for $\mu$-almost every point $z\in\T^2$ and any $\hat{z}\in\hat{\pi}^{-1}(z)$, \[\lim_{n\to\infty}\dfrac{\pr_2\left(\hat{f}^{n}(\hat{z})-\hat{z}\right)}{n}=a.\]
	
	As $\rho_V$, in Definition \ref{def:rho_mu}, is a continuous linear functional on $\mathcal{M}_{\T^2}(f)$, the set of the $f$-invariant Borel probability {measures}, and as $\mathcal{M}_{\T^2}(f)$ is compact and convex in the weak-$\star$ topology  we must have that \[\rho_V(\mathcal{M}_{\T^2}(f))=[a,b]\subset\R,\] where $a=\displaystyle\inf_{\mu\in\mathcal{M}_{\T^2}(f)}\;\rho_V(\mu)$ and $b=\displaystyle\sup_{\mu\in\mathcal{M}_{\T^2}(f)}\;\rho_V(\mu)$, and it is possible that $a=b$.
	
	Moreover, it is possible to show that $\rho_V(\hat{f})=\rho_V(\mathcal{M}_{\T^2}(f))=[a,b]$, { see \cite{doeff1997rotation}.} Thus, using the fact that $\mathcal{M}_{\T^2}(f)$ is compact and convex and its extremal points are ergodic measures we get the next proposition:
	\begin{proposition}\label{prop:measure-vertical}
		There exist two $f$-ergodic Borel probability measure $\mu_a$ and $\mu_b$ on $\T^2$ such that $\rho_V(\mu_a)=a$ and $\rho_V(\mu_b)=b$.
	\end{proposition}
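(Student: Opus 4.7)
The plan is to invoke two well-known facts from the ergodic theory of continuous maps on compact metric spaces: first, the set $\mathcal{M}_{\T^2}(f)$ is a non-empty, compact, convex subset of the space of signed Borel measures on $\T^2$ equipped with the weak-$\star$ topology; second, the extreme points of $\mathcal{M}_{\T^2}(f)$ are precisely the ergodic measures $\mathcal{M}_E(f)$. Given these, I would combine the Krein--Milman theorem with the linearity of $\rho_V$ to conclude.

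Concretely, the first step is to observe that the fibers $F_a := \rho_V^{-1}(\{a\}) \cap \mathcal{M}_{\T^2}(f)$ and $F_b := \rho_V^{-1}(\{b\}) \cap \mathcal{M}_{\T^2}(f)$ are non-empty (by the definition of $a$ and $b$ as infimum and supremum, together with compactness of $\mathcal{M}_{\T^2}(f)$ and continuity of $\rho_V$), and they are closed and convex, hence themselves compact and convex. By the Krein--Milman theorem, $F_a$ admits at least one extreme point $\mu_a$, and similarly $F_b$ admits an extreme point $\mu_b$.

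The next step is to promote these extreme points of $F_a$ and $F_b$ to extreme points of the full simplex $\mathcal{M}_{\T^2}(f)$. Suppose $\mu_a = t\nu_1 + (1-t)\nu_2$ with $\nu_1,\nu_2 \in \mathcal{M}_{\T^2}(f)$ and $t \in (0,1)$. Then by linearity of $\rho_V$,
\[
a = \rho_V(\mu_a) = t\,\rho_V(\nu_1) + (1-t)\,\rho_V(\nu_2),
\]
and since $\rho_V(\nu_i) \geq a$ for both $i$, both values must equal $a$, so $\nu_1,\nu_2 \in F_a$. As $\mu_a$ is extreme in $F_a$, this forces $\nu_1 = \nu_2 = \mu_a$. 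Hence $\mu_a$ is extreme in $\mathcal{M}_{\T^2}(f)$, and the analogous argument works for $\mu_b$. Since extreme points of $\mathcal{M}_{\T^2}(f)$ are ergodic, $\mu_a$ and $\mu_b$ are the desired ergodic measures.

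I do not expect a genuine obstacle here: the argument is entirely structural and relies on standard facts from convex analysis and ergodic theory that are invoked throughout the paper (compare the analogous statement in item (4) of Proposition \ref{prop:rotationset}). The only point requiring care is the linearity-and-extremality argument above, which must be stated explicitly to ensure that extremality within the fiber $F_a$ (respectively $F_b$) transfers to extremality within $\mathcal{M}_{\T^2}(f)$; everything else is immediate from Krein--Milman and continuity of the displacement integral.
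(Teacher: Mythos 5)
Your argument is correct and is essentially the paper's own: the paper simply asserts the proposition from the compactness and convexity of $\mathcal{M}_{\T^2}(f)$, the continuity and affinity of $\rho_V$, and the fact that extreme points of $\mathcal{M}_{\T^2}(f)$ are ergodic, and your Krein--Milman/face argument is the standard way of filling in exactly those details. No gap to report.
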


	Finally, we state a result due to Addas-Zanata, Tal and Garcia which {ensures} that if $\rho_V(\hat f)=\{0\}$ then there exists $\hat K\subset\A$ an {essential $\hat f$-invariant continuum}.
	
	\begin{theorem}[Theorem $2$ in \cite{braulio}]\label{teo:braulio}
		Given $f\in\homeo(\T^2)$ isotopic to an $f_A$ and a lift $\hat f\in\homeo(\A)$, if $\rho_V(\hat f)=\left\{\frac{r}{s}\right\}$ for some $r\in\Z$ and $s\in\N$, then there exists a compact connected set $\hat K\subset \A$, invariant under $\hat f^s-(0,r)$, which separates the ends of the vertical annulus.
	\end{theorem}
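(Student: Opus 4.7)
The plan is to first reduce to the case $\rho_V(\hat f) = \{0\}$ and then construct $\hat K$ as a maximal invariant set contained in a bounded vertical strip of $\A$. For the reduction, set $\hat g := \hat f^s - (0,r)$; by Proposition \ref{prop:vertical-rotationset}(2) one has $\rho_V(\hat g) = s \rho_V(\hat f) - r = \{0\}$, and any compact connected $\hat g$-invariant set separating the ends of $\A$ gives the desired $\hat K$ for the original $\hat f$. From here on I therefore assume $\rho_V(\hat f) = \{0\}$.

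Second, I extract a bi-recurrent orbit with uniformly bounded vertical displacement. By Proposition \ref{prop:measure-vertical} applied to $\hat f$, the extremes of the vertical rotation interval are realized by ergodic measures; since both extremes equal $0$, every $f$-ergodic measure $\mu$ satisfies $\int \varphi \operatorname{d}\mu = 0$, where $\varphi(z) := \pr_2(\hat f(\hat z) - \hat z)$. Applying Atkinson's theorem to the cocycle $\varphi$ under such a $\mu$ yields, for $\mu$-almost every $z$ and any lift $\hat z$, a bi-infinite sequence of times along which $\pr_2(\hat f^n(\hat z) - \hat z)$ returns to every neighborhood of $0$. Hence for $R > 0$ large enough the set
\[
\hat K_R := \{\hat z \in \A : |\pr_2(\hat f^n(\hat z) - \hat z)| \leq R \text{ for all } n \in \Z\}
\]
is nonempty, closed, and $\hat f$-invariant. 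To produce from it a compact connected essential set, I define an upper barrier $u:\T^1 \to [-\infty, +\infty]$ by $u(x) := \sup\{\pr_2(\hat z) : \hat z \in \hat K_R,\ \pr_1(\hat z) = x\}$, whose graph closure in $\A$ is compact, connected, essential, and invariant under $\hat f$ in an appropriate sense, as long as $u$ is everywhere finite.

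The main obstacle will be showing that $u(x)$ is finite for every $x \in \T^1$, and thus that the barrier indeed separates the ends of $\A$. If $u(x_0) = +\infty$ at some $x_0$, one would need to build orbits that stay in $\hat K_R$ yet climb arbitrarily high, and then extract through weak-$\star$ compactness of $\mathcal{M}_{\T^2}(f)$ an $f$-invariant probability measure supported on such a sequence of orbits whose vertical rotation number is strictly positive, contradicting $\rho_V(\hat f) = \rho_V(\mathcal{M}_{\T^2}(f)) = \{0\}$. Making this measure-extraction argument fully rigorous, especially controlling the interaction between the shift-equivariance in $\A$ and the descent to a measure on $\T^2$, is the technical core where the delicacy of the Addas-Zanata--Tal--Garcia proof lies; one likely needs to complement the barrier construction with a Birkhoff-recurrence-class analysis to rule out pathological columns where $\hat K_R$ fails to be vertically bounded.
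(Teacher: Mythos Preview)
The paper does not prove this statement: Theorem~\ref{teo:braulio} is quoted from \cite{braulio} and used as a black box in the proof of Theorem~B. There is therefore no ``paper's own proof'' to compare against; the result is an external input.

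That said, your proposed argument has a genuine gap that would make it fail even as an independent proof. Since $\hat f$ is a lift of $f:\T^2\to\T^2$ through the covering $\hat\pi:\A\to\T^2$, it commutes with the vertical deck translations: $\hat f(\hat z+(0,p))=\hat f(\hat z)+(0,p)$ for all $p\in\Z$. Consequently the displacement $\pr_2(\hat f^n(\hat z)-\hat z)$ is the same at $\hat z$ and at $\hat z+(0,p)$, so your set $\hat K_R$ is invariant under all integer vertical translations. This forces $u(x)=+\infty$ at every $x$ in the horizontal projection of $\hat K_R$, so the ``upper barrier'' never produces a compact object. The difficulty you identify (finiteness of $u$) is thus not a technical obstacle to be overcome but an actual obstruction showing the construction is wrong as stated. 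A correct argument must break the vertical periodicity somewhere: for instance, by working with the closure of a single bounded orbit (obtained via Atkinson), or by analysing the boundary of a connected component of the complement of the maximal bounded-orbit set, rather than a fibrewise supremum over all of $\hat K_R$. Your sketch also leaves unexplained why the graph closure of $u$ would be $\hat f$-invariant even if $u$ were finite; invariance of $\hat K_R$ does not in general pass to the supremum over vertical fibres.
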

	
	Now, we are able to prove Theorem B.
	
	{\begin{theoremB}
			Let $f$ be a transitive homeomorphism of $\T^2$ and let $g$ be a power of $f$ such that $g$ is isotopic to a Dehn twist. If $g$ has a periodic point, then $f$ has a topological horseshoe.
	\end{theoremB}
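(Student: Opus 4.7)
The plan is to argue by contradiction: suppose $f$ has no topological horseshoe, so by Remark~\ref{remark:horseshoe} no power of $f$ has one, and neither does any lift $\hat g \in \homeo_0(\A)$ of $g$. Up to topological conjugacy, we may assume the isotopy class of $g$ is represented by the standard Dehn-twist matrix $\begin{pmatrix}1 & m \\ 0 & 1\end{pmatrix}$ with $m \neq 0$, so that $\hat g$ is naturally defined on $\A$. Let $z_0$ be the periodic point of $g$ of period $q_0$. Replacing $f$ by $f^{q_0}$ and $g$ by $g^{q_0}$, which is harmless in view of Definition~\ref{def:horseshoe}, we may assume $z_0 \in \fix(g)$; then, by adjusting the lift $\hat g$ by an integer vertical translation, we may arrange that some $\hat z_0 \in \hat\pi^{-1}(z_0)$ is fixed by $\hat g$, placing $0 \in \rho_V(\hat g)$.

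We split into two cases according to $\rho_V(\hat g)$. In the degenerate case $\rho_V(\hat g) = \{0\}$, Theorem~\ref{teo:braulio} produces a compact connected $\hat g$-invariant set $\hat K \subset \A$ separating the two ends. Since $\hat g$ commutes with vertical integer translations, each translate $\hat K + (0,p)$ is also $\hat g$-invariant; after passing to a high enough power of $g$ so that the vertical diameter of $\hat K$ is less than one, these translates are pairwise disjoint and partition $\A$ into $\hat g$-invariant, relatively compact open annular layers $\hat U_p$. Fix a layer $\hat U_0$. The Dehn-twist coupling makes horizontal rotation numbers vary with vertical position within $\hat U_0$, yielding two points of $\Ne^+(\hat g|_{\hat U_0})$ with distinct horizontal rotation numbers (well-defined by Theorem~\ref{teo:A}). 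Using Lemma~\ref{lemma:transitive} and the transitivity of $f$, we construct a $\hat g$-orbit in $\hat U_0$ whose projection to $\hat\pi(\hat U_0) \subset \T^2$ is dense; compactifying $\hat U_0$ to a sphere, this places the two ends of the layer in the same Birkhoff recurrence class. Proposition~\ref{prop:D} applied to $\hat g|_{\hat U_0}$ then yields a topological horseshoe, contradicting our assumption.

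In the non-degenerate case $\rho_V(\hat g) = [a,b]$ with $a < b$, Proposition~\ref{prop:measure-vertical} supplies ergodic measures $\mu_a, \mu_b$ realizing the extremes. Besides the fixed point $\hat z_0$ (which already yields a zero horizontal rotation number in $\Ne^+(\hat g)$), the Dehn-twist coupling and the existence of invariant measures with zero vertical rotation but nonzero vertical spatial average produce a second point of $\Ne^+(\hat g)$ with a nonzero horizontal rotation number. The coexistence of orbits with vertical rotations $a$ and $b$ (of opposite sign after a suitable integer shift and power), combined with the transitivity of $f$ propagated through Lemma~\ref{lemma:transitive}, lets us verify that the ends $N$ and $S$ lie in the same Birkhoff recurrence class of $\hat g_{\sphere}$. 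Proposition~\ref{prop:D} then supplies a topological horseshoe, again a contradiction.

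The main obstacle is verifying the Birkhoff-recurrence-class hypothesis of Proposition~\ref{prop:D} in each case. In the degenerate case the difficulty is to propagate transitivity of $f$ through the covering $\hat\pi$ and the $g$-orbit decomposition of Lemma~\ref{lemma:transitive} to produce a $\hat g$-orbit dense in the layer $\hat U_0$, using the right choice of lift so that successive $\hat g$-iterates cluster in the intended layer. In the non-degenerate case the difficulty is twofold: one must analyze the ergodic decomposition of $\rho_V(\hat g)$ to guarantee a second ergodic measure of zero vertical rotation but distinct horizontal rotation, and one must connect the ends of $\A$ by a Birkhoff cycle that uses both the transitivity of $f$ and the orbits escaping to $N$ and $S$ produced by $\mu_a$ and $\mu_b$.
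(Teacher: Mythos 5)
Your overall skeleton (split on $\rho_V$, Theorem \ref{teo:braulio} in the degenerate case, Proposition \ref{prop:D} to conclude) resembles the paper's, but both cases contain genuine gaps. In the degenerate case $\rho_V=\{0\}$, two steps fail. First, passing to a high power of $g$ does not shrink the invariant continuum: Theorem \ref{teo:braulio} gives no control on the vertical diameter of $\hat K$, and a $\hat g$-invariant set is invariant for every power, so ``after passing to a high enough power so that the vertical diameter of $\hat K$ is less than one'' cannot be arranged; hence the claimed disjointness of the translates $\hat K+(0,p)$ and the partition of $\A$ into thin invariant layers $\hat U_p$ is unavailable. Second, even granting such a layer, the assertion that ``the Dehn-twist coupling makes horizontal rotation numbers vary with vertical position within $\hat U_0$'' is unsupported: the twist relation $\check h(\check z+(0,p))=\check h(\check z)+(mp,p)$ only compares points differing by an \emph{integer} vertical translation, and two such points can never lie in one layer of vertical diameter less than one, so hypothesis (1) of Proposition \ref{prop:D} is never verified inside a layer. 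The paper's argument in this case is structurally different and avoids both issues: it uses the region between $\hat K^-$ and $\hat K^+-(0,M)$ only to obtain uniformly bounded vertical displacement on all of $\A$, deduces (Corollary \ref{cor:nonwandering}, Theorem \ref{teo:A}) that, in the absence of a horseshoe, every point has a rotation number and the rotation-number map is continuous, shows it is constant by the transitivity argument of Proposition \ref{prop:rational-rotationset} (via Lemma \ref{lemma:omega-limit}), and then contradicts constancy with the two fixed points $\hat z_0$ and $\hat z_0+(0,1)$, whose rotation numbers are $0$ and $m\neq 0$ precisely because they differ by a full vertical translation; no restriction to a sub-annulus and no appeal to Proposition \ref{prop:D} occurs there.

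In the non-degenerate case, your verification of the Birkhoff-class hypothesis (orbits with vertical rotation numbers of opposite sign, after a power and a vertical shift, connecting $N$ and $S$) is the paper's argument, and transitivity is in fact not needed for it. The gap is your source of two distinct, well-defined rotation numbers: ``invariant measures with zero vertical rotation but nonzero vertical spatial average'' is not a coherent mechanism (the vertical rotation number of a measure \emph{is} its average vertical displacement), and after the normalization $\hat h'=\hat h^{s}-(0,r)$ the original fixed point escapes to an end, so it no longer belongs to $\Ne^+(\hat h')$ and carries no rotation number in the sense required by Proposition \ref{prop:D}. The paper closes this gap with the infinite twist condition together with Theorem \ref{teo:salvador}: since there are orbits going to both ends, $(\hat h',\check h')$ has periodic orbits of every rational horizontal rotation number, giving the two distinct rotation numbers needed. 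Without this (or an equivalent) input, your case-2 argument does not close.
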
}
	\noindent\textit{Proof.}\hspace{.1cm} Let $g=f^{q}$, where $q\geq 1$, be the positive power of $f$ that is isotopic to a Dehn twist. So, there is a matrix $A\in\operatorname{SL}(2,\Z)$ \[A=\begin{pmatrix}
		1 & m \\ 0 & 1
	\end{pmatrix},\] where $m\in\Z\backslash\{(0,0)\}$ and such that $g$ is conjugate to $g_A$, a homeomorphism isotopic to the Dehn twist map induced by the matrix $A$. But, as we establish at the begging of this section  we will assume that $g$ is itself isotopic to $g_A$. This means that if $\check g\in\homeo(\R^2)$ is a lift of $g$ to $\R^2$ then \[\check g(\check z+(p_1,p_2))=\check g(\check z)+A\begin{pmatrix}
		p_1\\p_2
	\end{pmatrix}.\]

	{Let $z_0\in\T^2$ be a periodic point for $g$ and $q_0\geq 1$ be the period of $z_0$ for $g$. Take $h=g^{q_0}$. So, $h$ is itself a homeomorphism isotopic to a Dehn twist and {it fixes} $z_0\in\T^2$.
		
		Let $\check h\in\homeo(\R^2)$ be a lift of $h$ such that $\check h(\check z_0)=\check z_0$, {for some $\check z_0\in\check\pi^{-1}(z_0)$}. 
		
		Moreover, we have that $\check h$ induces a lift $\hat h\in\homeo(\A)$ of $h$ to {$\A=\T^1\times\R$} such that the following diagram commutes:}
	
	\begin{equation}\label{diagram:dehn}
		\begin{tikzcd}
			\mathbb{R}^2 \arrow[rr, "\check h"] \arrow[rd, "\check\tau"] \arrow[dd, "\check\pi"'] &                                                                & \mathbb{R}^2 \arrow[rd, "\check\tau"] \arrow[d, no head] &                               \\
			& \mathbb{A} \arrow[rr, "\hat h\hspace{1.5cm}"] \arrow[ld, "\hat\pi"] & {} \arrow[d]                                          & \mathbb{A} \arrow[ld, "\hat\pi"] \\
			\mathbb{T}^2 \arrow[rr, "h"]                                                 &                                                                & \mathbb{T}^2                                          &                              
		\end{tikzcd}
	\end{equation}
	
	Take this homeomorphism $\hat h\in\homeo(\A)$ induced by $\check h$, and let $\rho_V(\hat h)$ be its vertical rotation interval. In addition, {as $\check h(\check z_0)=\check z_0$, for some $\check z_0\in\check\pi^{-1}(z_0)$, we have that $\hat h(\hat z_0)=\hat z_0$, for any $\hat z_0\in\hat\pi^{-1}(z_0)$.} Therefore $0\in \rho_V(\hat h)$.  
	
	So, we have two possibilities:
	\begin{enumerate}
		\item $\rho_V(\hat h)=\{0\}$ or,
		\item $\rho_V(\hat h)$ is a non-degenerate interval of $\R$ containing $\{0\}$. 
	\end{enumerate}

	\subsection{If $\rho_V(\hat h)=\{0\}$}
	In this case, by Theorem \ref{teo:braulio}, there exists $\hat K\subset\A$ an {essential $\hat h$-invariant continuum}. So, there exists an integer $M_0>0$ such that $\hat K\subset\T^1\times[-M_0,M_0]$. Let $\hat K^-$ and $\hat K^+$ be the connected components of $(\A)\backslash \hat K$ that contains $-\infty$ and $+\infty$, respectively. As $\hat K$ is $\hat h$-invariant, then $\hat K^-$ and $\hat K^+$ are also $\hat h$-invariant. Then, there is $M>M_0$ such that $\hat K^-\cap(\hat K^+-(0,M))$ is an open, connected and $\hat h$-invariant set that contains a fundamental domain of $\T^2$. See Figure \ref{fig:K}.
	
	\begin{figure}[!htb]
		\centering
		\def\svgwidth{3cm}
		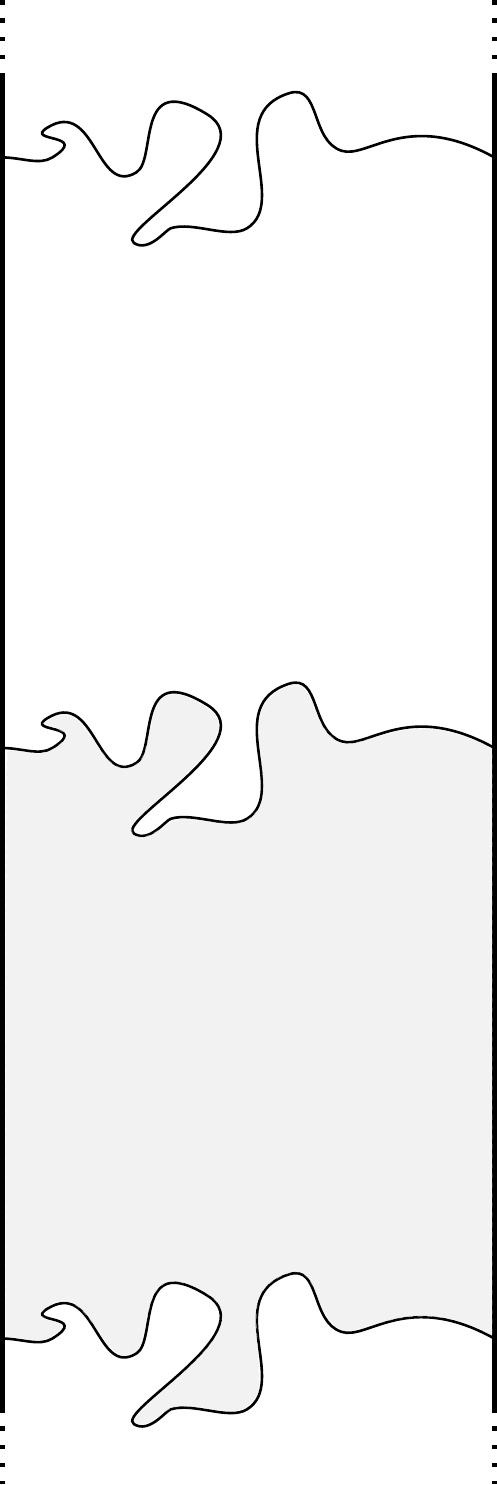
		\caption{Existence of a continuum $\hat K$.}
		\label{fig:K}
	\end{figure}
	
	Therefore, for all $\hat w\in\A$ and all integer $n>0$ there is some $M'>M$ such that
	\begin{equation}\label{eq:20}
		|\pr_2(\hat h^n(\hat w)-\hat w)|<M'.
	\end{equation} 
	
	As observed in the proof of Proposition \ref{prop:rational-rotationset}, this means that for all points $\hat w\in\A$, the $\omega_{\hat h}(\hat w)$ is not empty.
	
	As $h\in\homeo(\T^2)$ is isotopic to Dehn Twist and the diagram in (\ref{diagram:dehn}) commutes, we have that {$\hat h(\hat w +(p,0))=\hat h(\hat w)+(p,0)$, for all $p\in\Z$.}

	Since  $\check h\in\homeo(\R^2)$ commutes with the integer horizontal translations then homotopy theory implies that $\hat h\in\homeo(\A)$ is homotopic to identity. Moreover $\hat h$ preserves the ends of $\A$. Thus, by \cite{bclr16} (see propositions $3.3$, $3.6$ and $8.1$), $\hat h$ is isotopic to identity.

	Moreover, as $f$ is topologically transitive, { it is possible to apply Corollary \ref{cor:nonwandering}  because of (\ref{eq:20})}. So, we have that $\hat h$ is non-wandering.

	As we did in the proof of Proposition \ref{prop:rational-rotationset}, if we suppose, by contradiction, that $f$ has no topological horseshoe, then $h$ and also $\hat h$ have no topological horseshoe. And moreover, the map $\rot_{\check h}:\A\to\R$ is well-defined, continuous and must to be constant.
	
	However this is impossible. Let us explain why. We know that $h\in\homeo(\T^2)$  is isotopic to a Dehn twist map induced by \[A=\begin{pmatrix}
		1 & m \\ 0 & 1
	\end{pmatrix},\] and the diagram in (\ref{diagram:dehn}) commutes. So we must have that $\check h\in\homeo(\R^2)$ is also a lift of $\hat h\in\homeo(\A)$ to $\R^2$ and for all $\check z\in\R^2$
	\begin{equation}\label{eq:check g}
		\check h(\check z+(p_1,p_2))=\check h(\check z)+(p_1+mp_2,p_2),\quad\mbox{where}\;(p_1,p_2)\in\Z^2.
	\end{equation} 
	
	Moreover, we know  there is $\hat z_0\in\A$ such that \[\hat h(\hat z_0)=\hat z_0,\] then we can assume that $\rot(\hat z_0,\check h)=0$, because if it is not then we can take some horizontal integer translation of $\check h$ that satisfies this property. And, because of (\ref{eq:check g}), if we take $\hat z_1=\hat z_0+(0,1)$ then \[\check h(\check z_1)=\check z_1+(m,0),\;\check z_1\in\check\tau^{-1}(\hat{z}_1)\] and then $\rot(\hat z_1,\check h)=m\neq0$. And this is a contradiction with the fact that $\rot_{\check h}$ is a constant map.
	
	So, for this case, we prove, by contradiction, that $f$ must to have a topological horseshoe. 
	
	\subsection{If $\rho_V(\hat h)$ is a non-degenerate compact interval of $\R$ containing $\{0\}$}
	
	The existence of a horseshoe for this case is already known, althought we could not find it explicitly written in the literature. In any case it follows as a scholium of Theorem 3.1 due to Doeff and Misiurewicz in \cite{doeff1997shear} together with the techniques developed by Llibre and Mackay in \cite{llibre1991rotation}. Furthermore, the case where $f$ is a $C^{1+ \epsilon}$ diffeomorphism has already been proved by Addas-Zanata in \cite{salvador2015}.
	
	In the following we will give a new proof for the existence of a topological horseshoe for this case only using Forcing Theory and Rotation Theory.
	
	\begin{definition}
		Let $\hat f\in\homeo(\A)$ be isotopic to the identity and let $\check f\in\homeo(\R^2)$ be one of its lifts. We say that:
		\begin{enumerate}
			\item $\hat f$ satisfies the \textit{{infinity} twist condition (ITC)} if for any lift $\check f\in\homeo(\R^2)$ the following property holds: \[\pr_1\circ\check f(\check x,\check y)\to\pm\infty\;\;\;\mbox{as}\;\;\check y\to\pm\infty.\] 
			\item $(\hat f,\check f)$ has a periodic orbit of rotation number $\frac{r}{s}$, if there exists $\hat z\in\A$ such that $\hat f^s(\hat z)=\hat z$ and $\check f^s(\check z)=\check z +(r,0)$ for all $\check z\in\check\tau^{-1}(\hat z)$.
		\end{enumerate}
	\end{definition}
	
	Note that if $\hat f\in\homeo(\A)$ is a lift of $f\in\homeo(\T^2)$ homeomorphism isotopic to a Dehn twist map induced by the special matrix $A$ then $\hat f$ is isotopic to identity (as explained before) and satisfies ITC property. Indeed, if $\check f\in\homeo(\R^2)$ is a lift of $\hat f$, then $\check f$ is also a lift of $f$ satisfying \[\check f(\check z +(p_1,p_2))=\check f(\check z)+(p_1+mp_2,p_2),\] where $m\geq 1$ is given by the Dehn Twist map.
	
	\begin{theorem}[Theorem $1$ at \cite{addas2005}]\label{teo:salvador}
		Let $\hat f\in\homeo(\A)$ be isotopic to the identity and let $\check f\in\homeo(\R^2)$ be some lift of $f$. Suppose $\hat f$ satisfies the ITC property and there exist points $\hat z_1,\hat z_2\in\A$ such that \[\begin{array}{cl}
			\pr_2\circ\hat f^n(\hat z_1)\to\mp\infty &\;\;\;\mbox{as}\;\;n\to\pm\infty\;\;\mbox{and}\\
			\pr_2\circ\hat f^n(\hat z_2)\to\pm\infty &\;\;\;\mbox{as}\;\;n\to\pm\infty,\\
		\end{array}\]
		then for all rational numbers $\frac{r}{s}$, $(\hat f,\check f)$ has at least two periodic orbits with rotation number equal to $\frac{r}{s}$.
	\end{theorem}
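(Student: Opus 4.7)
The plan is to establish, for each rational $r/s$, two distinct $\hat f^{s}$-periodic orbits in $\A$ whose lifts shift by $(r,0)$ under $\check f^{s}$. Equivalently, writing $\check g := \check f^{s}-(r,0)$, I would produce two fixed points of $\check g$ on $\R^{2}$ that lie on distinct $\hat f$-orbits in $\A$. The orbits $\hat z_1,\hat z_2$, sampled every $s$ iterates, give $\check g$-orbits whose second coordinate tends to $\mp\infty$ and $\pm\infty$ as $n\to\pm\infty$ respectively; the horizontal shift $-(r,0)$ is irrelevant to this vertical crossing structure.

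The first core step is to show that $\check g$ has at least one fixed point, by contradiction using Brouwer theory. If $\check g$ is fixed-point free then it is a Brouwer homeomorphism on $\R^{2}$, so by Brouwer's translation theorem every point lies on a Brouwer line $\check\phi$ satisfying $\check g^{-1}(\check\phi)\subset R(\check\phi)$ and $\check g(\check\phi)\subset L(\check\phi)$. The ITC hypothesis forces any such $\check\phi$ into an asymptotically horizontal configuration outside a compact region, because the unbounded horizontal shear near each vertical end means that $\check\phi$ cannot curl back without being trapped between $\check g(\check\phi)$ and $\check g^{-1}(\check\phi)$. Hence $\check\phi$ separates the top end from the bottom end of $\R^{2}$ coherently, and $\check g$ pushes points across it in a fixed direction. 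But the $\check g$-orbits coming from $\hat z_1$ and $\hat z_2$ move in opposite vertical directions, so one of them must cross $\check\phi$ against the flow of $\check g$, violating the Brouwer line property and yielding the desired fixed point.

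To upgrade from one to two fixed points, I would use an index argument. The ITC together with the crossing orbits confine the fixed-point set of $\check g$ inside one horizontal fundamental strip to a bounded region. A Brouwer/Lefschetz index computation on a large rectangle in this strip, using the prescribed asymptotics of $\check g(\check z)-\check z$ at the two vertical ends, shows that the total local index of the fixed-point set cannot equal the generic nonzero index of a single isolated fixed point. Hence at least two fixed points must appear, giving two distinct $\hat f^{s}$-periodic orbits of rotation number $r/s$.

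The main obstacle I expect is the middle step: rigorously controlling the asymptotic shape of Brouwer lines under the ITC hypothesis so that the two crossing orbits genuinely violate the Brouwer line property. Without area preservation the classical Poincar{\'e}-Birkhoff argument is unavailable, and the substitute used here depends on combining the unbounded horizontal shear at both vertical ends with the existence of orbits transiting in both directions; making this incompatibility fully rigorous is the technical heart of the theorem as proved in \cite{addas2005}.
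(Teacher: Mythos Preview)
The paper does not contain a proof of this theorem: it is quoted verbatim as Theorem~1 of \cite{addas2005} and then used as a black box in the proof of Theorem~B. There is therefore no ``paper's own proof'' to compare your proposal against.

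As for the proposal itself, your overall strategy is the right one and is in the spirit of the original argument in \cite{addas2005}: reduce to finding fixed points of $\check g=\check f^{s}-(r,0)$, obtain one fixed point by contradiction with Brouwer's plane translation theorem, and then upgrade to two via an index count. Two points deserve more care. First, the assertion that ITC forces a Brouwer line to be ``asymptotically horizontal'' and hence to separate the two ends of the plane is not something one can read off directly from the definition; the actual mechanism in \cite{addas2005} is to exploit the orbits of $\hat z_1$ and $\hat z_2$ (which persist for $\check g$, as you note) together with the twist to build, for each large enough horizontal strip, a free essential curve and then argue \`a la Franks that a fixed-point-free $\check g$ cannot admit orbits crossing such a curve in both directions. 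Second, your index paragraph is too vague as written: saying the total index ``cannot equal the generic nonzero index of a single isolated fixed point'' does not by itself rule out a single fixed point of, say, index~$0$ or index~$2$. In the original proof the two fixed points come from genuinely different constructions (roughly, one of positive type and one of negative type, distinguished by how nearby orbits wind), not from a single global index computation. You have correctly identified where the real work lies; filling in those two steps is essentially reproducing the content of \cite{addas2005}.
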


	\noindent\textit{Continuation of the proof of Theorem B.}\hspace{.1cm} Let $h$, $\check h$ and $\hat h$ be as in the beginning of this proof. That is $h\in\homeo(\T^2)$ is isotopic to a Dehn twist map induced by \[A=\begin{pmatrix}
		1 & m \\ 0 & 1
	\end{pmatrix},\] {$\check h\in\homeo(\R^2)$ is a lift of $h$ that fix some lift $\check z_0\in\check\pi^{-1}(z_0)$ of the fixed point $z_0\in\T^2$ for $h$} and $\hat h\in\homeo(\A)$ is a lift of $h$ induced by $\check h$ such that the diagram in (\ref{diagram:dehn}) commutes. 
	
	Then, as explained before, we know that $\hat h\in\homeo(\A)$ is isotopic to identity and, moreover, $0\in\rho_V(\hat h)$. {Now, we are assume that $\rho_V(\hat h)$ is a non-degenerate {compact} interval of $\R$ containing $\{0\}$. 
		
	We can assume, without loss of generality, that $\rho_V(\hat h)=[a,b]$ where $a<0<b$. Because it is always possible choose a lift with such property.
		
		Indeed, if $0\notin\operatorname{int}(\rho_V(\hat h))$ then $0\in\partial(\rho_V(\hat h))$. Suppose that $\rho_V(\hat h)=[0,c]$ (the other case, namely $\rho_V(\hat h)=[c,0]$, is analogous). Take any $\frac{r}{s}\in(0,c)$, where $r\in\N$ and $s\in\N$. So, $h'= h^s$ is a homeomorphism in $\T^2$ isotopic to a Dehn twist map induced by $A$ and such the lift $\hat h':=\hat h^s-(0,r)$ to $\A$ has the property that $0\in\operatorname{int}(\rho_V(\hat h'))$, once $\rho_V(\hat h')=s\rho_V(\hat h)-r$. 
	}
	
	For this reason, let $\hat h$ be such that $\rho_V(\hat h)=[a,b]$, where $a<0<b$. Proposition \ref{prop:measure-vertical} and Birkhoff's Ergodic Theorem imply that there exist $z_a,z_b\in\T^2$ such that 
	\begin{equation}\label{eq:z_az_b}
		\begin{array}{ccc}
			\rho_V(z_a,\hat h)=a & \mbox{and} & \rho_V(z_a, \hat h^{-1})=-a\\
			\rho_V(z_b, \hat h)=b & \mbox{and} & \rho_V(z_b, \hat h^{-1})=-b
		\end{array}
	\end{equation}
	
	{The following lemma is an easy consequence for points satisfying the equations above, and the proof is omitted.}
	
	\begin{lemma}\label{lema:teo_salvador}
		Let $z_a,z_b\in\T^2$ be the points that satisfy the equations in (\ref{eq:z_az_b}). If $\hat z_a\in\hat\pi^{-1}(z_a)$ and $\hat z_b\in\hat\pi^{-1}(z_b)$ then 
		\[\begin{array}{cl}
			\pr_2\circ\hat h^n(\hat z_a)\to\mp\infty &\;\;\;\mbox{as}\;\;n\to\pm\infty\;\;\mbox{and}\\
			\pr_2\circ\hat h^n(\hat z_b)\to\pm\infty &\;\;\;\mbox{as}\;\;n\to\pm\infty.\\
		\end{array}\]
	\end{lemma}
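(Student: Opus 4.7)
The plan is to read off the asymptotic behavior of the vertical coordinate directly from the definition of the vertical rotation number, using only the sign information $a<0<b$. The main observation is the elementary fact that if $(x_n)_{n\in\N}\subset\R$ is a sequence such that $x_n/n$ converges to a nonzero limit $\ell$, then $x_n$ diverges to $+\infty$ or $-\infty$ according to the sign of $\ell$.

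First, I would treat the point $\hat z_a$ and the forward iterates. By hypothesis $\rho_V(z_a,\hat h)=a$ exists, and as noted in the paper this rotation number does not depend on the choice of lift, so for any $\hat z_a\in\hat\pi^{-1}(z_a)$ one has
\[
\lim_{n\to+\infty}\frac{\pr_2(\hat h^n(\hat z_a)-\hat z_a)}{n}=a<0.
\]
Since the limit is strictly negative, the elementary observation above forces $\pr_2(\hat h^n(\hat z_a))-\pr_2(\hat z_a)\to-\infty$, hence $\pr_2\circ\hat h^n(\hat z_a)\to-\infty$ as $n\to+\infty$.

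Second, for the backward iterates I would apply the same principle to $\hat h^{-1}$. The hypothesis $\rho_V(z_a,\hat h^{-1})=-a>0$ means
\[
\lim_{n\to+\infty}\frac{\pr_2(\hat h^{-n}(\hat z_a)-\hat z_a)}{n}=-a>0,
\]
and since this limit is strictly positive we get $\pr_2\circ\hat h^{-n}(\hat z_a)\to+\infty$ as $n\to+\infty$, equivalently $\pr_2\circ\hat h^n(\hat z_a)\to+\infty$ as $n\to-\infty$. Combining both gives the $\mp\infty$ behaviour claimed for $\hat z_a$.

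Third, the argument for $\hat z_b$ is identical with reversed signs: from $\rho_V(z_b,\hat h)=b>0$ and $\rho_V(z_b,\hat h^{-1})=-b<0$ the same elementary observation yields $\pr_2\circ\hat h^n(\hat z_b)\to+\infty$ as $n\to+\infty$ and $\pr_2\circ\hat h^n(\hat z_b)\to-\infty$ as $n\to-\infty$, which is the $\pm\infty$ behaviour claimed. There is no real obstacle here; the lemma is essentially a direct unpacking of the definition of the vertical rotation number, whose existence for the two distinguished points is the nontrivial input and is already provided by Proposition \ref{prop:measure-vertical} and Birkhoff's Ergodic Theorem in the discussion preceding the statement.
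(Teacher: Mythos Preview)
Your proposal is correct and is exactly the kind of direct argument the paper has in mind: the authors explicitly omit the proof, calling the lemma ``an easy consequence for points satisfying the equations above,'' and your unpacking of the definition of the vertical rotation number via the elementary observation $x_n/n\to\ell\neq 0\Rightarrow x_n\to(\operatorname{sign}\ell)\infty$ is precisely that easy consequence.
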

	%
	%
	
	Let $\check h\in\homeo(\R^2)$ be a lift of $\hat h$. As $\hat h$ is a lift of a homeomorphism isotopic to a Dehn twist map induced by the special matrix $A$ in $\T^2$, we have that $\hat h$ satisfies \textit{ITC}. And together with the Lemma \ref{lema:teo_salvador} we have, by Theorem \ref{teo:salvador}, that for all rationals $\frac{r}{s}\in[a,b]$, $(\hat h, \check h)$ has a periodic orbit with rotation number $\frac{r}{s}$.
	
	So, there are $\hat w,\hat w'\in\A$, $r,r'\in\Z$ and $s,s'\in\N$ such that $\frac{r}{s}\neq\frac{r'}{s'}$ and \[\begin{array}{ll}
		\hat h^s(\hat w)=\hat w, & \check h^s(\check w)=\check w +(r,0), \;\mbox{where}\; \check w\in\check\tau^{-1}(w)\\
		\hat h^{s'}(\hat w')=\hat w', & \check h^{s'}(\check w')=\check w' +(r',0), \;\mbox{where}\; \check w'\in\check\tau^{-1}(w').
	\end{array}\]	
	This means that $w,w'\in\Ne^+(\hat h)$, $\rot(\hat w,\check h)=\frac{r}{s}$ and $\rot(\hat w',\check h)=\frac{r'}{s'}$.
	
	Now, let $\hat h_{\sphere}$ be the continuous extension of $\hat h$ to $\mathbb{S}^2=\A\cup\{N,S\}$, then Lemma \ref{lema:teo_salvador} implies that \[\begin{array}{c}
		(\omega)_{\hat h_{\sphere}}(\hat z_a)=(\alpha)_{\hat h_{\sphere}}(\hat z_b)=\{S\}\\
		(\alpha)_{\hat h_{\sphere}}(\hat z_a)=(\omega)_{\hat h_{\sphere}}(\hat z_b)=\{N\}
	\end{array}\]
	So, follows that $N$ and $S$ are in the same Birkhoff recurrence classes of $\hat h_{\sphere}$. 
	
	Therefore, Proposition \ref{prop:D} implies that $\hat h$ has a topological horseshoe. And, as explained before, it follows that $h=g^{q_0}$,  $g=f^q$ and therefore $f$, have a topological horseshoe, as we wanted to prove.\qed
	
	\singlespacing   
	\bibliographystyle{amsalpha} 
	\bibliography{bibliografia} 
\end{document}